\newcommand{\nc}{\newcommand}
\numberwithin{equation}{section}
\newtheorem{theorem}{Theorem}[section]
\newtheorem{prop}[theorem]{Proposition}
\newtheorem{importnota}[theorem]{Important Notation}
\newtheorem{prblm}[theorem]{Problem}
\newtheorem{notation}[theorem]{Notation}
\newtheorem{caution}[theorem]{Caution}
\newtheorem{remark}[theorem]{Remark}
\newtheorem{lemma}[theorem]{Lemma}
\newtheorem{construction}[theorem]{Construction}
\newtheorem{corollary}[theorem]{Corollary}
\newtheorem{example}[theorem]{Example}
\newtheorem{conclusion}[theorem]{Conclusion}
\newtheorem{triviality}[theorem]{Triviality}
\newtheorem{proto}[theorem]{Prototype Quasifibration}
\newtheorem{cauex}[theorem]{Cautionary Example}
\newtheorem{propositiondef}[theorem]{Proposition-Definition}
\newtheorem{subth}{Nuisance}[theorem]
\newtheorem{ssubth}{ }[subth]
\newtheorem{conjecture}[theorem]{Conjecture}
\newtheorem{sidest}[theorem]{Side Story}
\newtheorem{miniexample}[theorem]{Example}
\theoremstyle{definition}
\newtheorem{defin}[theorem]{Definition}
\nc\tri[1]{\begin{triviality}}
\nc\side[1]{\begin{sidest}}
\nc\conj[1]{\begin{conjecture}}
\nc\prodef[1]{\begin{propositiondef}}
\nc\prt[1]{\begin{proto}}
\nc\lem[1]{\begin{lemma}}
\nc\sblm[1]{\begin{sublemma}}
\nc\pro[1]{\begin{prop}}
\nc\thm[1]{\begin{theorem}}
\nc\cor[1]{\begin{corollary}}
\nc\dfn[1]{\begin{defin}}
\nc\sthm[1]{\begin{subth}}
\nc\exm[1]{\begin{example}}
\nc\miniexm[1]{\begin{miniexample}}
\nc\plm[1]{\begin{prblm}}
\nc\rmk[1]{\begin{remark}}
\nc\subrmk[1]{\begin{subremark}}
\nc\ntn[1]{\begin{notation}}
\nc\cau[1]{\begin{caution}}
\nc\imn[1]{\begin{importnota}}
\nc\cax[1]{\begin{cauex}}
\nc\con[1]{\begin{construction}}
\nc\ssthm[1]{\begin{ssubth}}
\nc\cnc[1]{\begin{conclusion}}
\nc\elem{\end{lemma}}
\nc\esblm{\end{sublemma}}
\nc\eside{\end{sidest}}
\nc\econj{\end{conjecture}}
\nc\eprodef{\end{propositiondef}}
\nc\eprt{\end{proto}}
\nc\ethm{\end{theorem}}
\nc\ecor{\end{corollary}}
\nc\edfn{\end{defin}}
\nc\esthm{\end{subth}}
\nc\epro{\end{prop}}
\nc\etri{\end{triviality}}
\nc\eexm{\end{example}}
\nc\eminiexm{\end{miniexample}}
\nc\ermk{\end{remark}}
\nc\subermk{\end{subremark}}
\nc\eplm{\end{prblm}}
\nc\ecau{\end{caution}}
\nc\ecax{\end{cauex}}
\nc\eimn{\end{importnota}}
\nc\entn{\end{notation}}
\nc\econ{\end{construction}}
\nc\ecnc{\end{conclusion}}
\nc\essthm{\end{ssubth}}
\newcommand{\C}{\mathbb{C}}
\newcommand{\R}{\mathbb{R}}
\newcommand{\Q}{\mathbb{Q}}
\newcommand{\Z}{\mathbb{Z}}
\newcommand{\A}{\mathbb{A}}
\newcommand{\ve}{\varepsilon}
\newcommand{\diag}{{\rm diag}}
\newcommand{\ds}{\displaystyle}
\newcommand{\f}{\bold{f}}
\newcommand{\lra}{\longrightarrow}
\newcommand{\GL}{{\rm GL}}
\newcommand{\SL}{{\rm SL}}
\newcommand{\SO}{{\rm SO}}
\newcommand{\SU}{{\rm SU}}
\newcommand{\bs}{\backslash}
\renewcommand{\Bbb}{\mathbb}
\title[Eisenstein series on $\SO(3,n+1)$]
{Fourier coefficients of Eisenstein series on $\SO(3,n+1)$}
\author{Henry H. Kim and Takuya Yamauchi}
\date{\today}
\thanks{The first author is partially supported by NSERC grant \#482564. 
}
\subjclass[2010]{Primary 11F55, Secondary 11F70, 22E55}
\address{Henry H. Kim \\
Department of mathematics \\
 University of Toronto \\
Toronto, Ontario M5S 2E4, CANADA \\
and Korea Institute for Advanced Study, Seoul, KOREA}
\email{henrykim@math.toronto.edu}
\address{Takuya Yamauchi \\
Mathematical Inst. Tohoku Univ.\\
 6-3,Aoba, Aramaki, Aoba-Ku, Sendai 980-8578, JAPAN}
\email{takuya.yamauchi.c3@tohoku.ac.jp}
\keywords{Eisenstein series, orthogonal groups, Siegel series}
\begin{document}
\maketitle
\begin{abstract}
We explicitly compute Fourier coefficients of Eisenstein series on the special orthogonal group 
$G=\SO(3,n+1)$ over $\Q$ with $n\ge 2$ which 
splits everywhere at finite places. We show that it has a bounded denominator.
\end{abstract}

\tableofcontents



\section{Introduction}\label{intro}

Explicit computations of Fourier coefficients of Eisenstein series on reductive groups have been an important theme in the theory of automorphic forms. 
Among many works in this direction, the explicit formula for the Fourier coefficients of Siegel Eisenstein series obtained by several people (see, for example, \cite{Katsurada} among others) has played a crucial role in the explicit construction of cuspidal automorphic forms on symplectic groups, known as Ikeda lifts (\cite{Ik01}). 

Along with further developments in the explicit theory of Fourier coefficients of Eisenstein series, and by making use of previously established formulas, similar constructions have been extended to other reductive groups (\cite{Ik08}, \cite{Y10}, \cite{KY}). 
In this paper, we investigate the case where the reductive group is $\SO(3,n+1)$ and 
the results here will be used in the forthcoming paper \cite{KY3}. 

Let $\A=\A_\Q$ be the ring of adeles of $\Q$. 
For an integer $n\ge 2$, let $A$ be a positive definite integral matrix of size $n-2$. 
Assume $A$ is even (namely, any diagonal entries are even integers) if $n$ is even while $A=\diag(1,A')$ if $n$ is odd where $A'$ is 
a positive definite integral matrix of size $n-3$.  
Let $J_{2,n}={\rm antidiag}(1,1,-A,1,1)$ and 
$J_{3,n+1}={\rm antidiag}(1,1,1,-A,1,1,1)$ which are of size $n+3$ and $n+4$ 
respectively (see Section \ref{pre}). 
Let $G=\SO(3,n+1)$ be the special orthogonal group over $\Q$ associated to 
the symmetric pairing defined by $J_{3,n+1}$. Note that if $n$ is even, $G$ does not have discrete series.
Assume $G$ splits everywhere at finite places.  
By \cite[Section 2.1]{Serre}, it is equivalent to 
$\det(A)=1$ and $n\equiv 2$ mod 8 when $n$ is even, 
$\det(A')=1$ and $n\equiv 3$ mod 8 when $n$ is odd. 
For instance, this condition is satisfied when $n=8a+2$,  
$A=\diag(\overbrace{A_8,\ldots,A_8}^{a})$ or $n=8a+3$, 
$A=\diag(1,\overbrace{A_8,\ldots,A_8}^{a})$
where $A_8$ is the $E_8$ Cartan matrix given as an element of even integral matrix in $M_8(\Z)$. This agrees with the result of \cite{G} since $\frac {n+4}2\equiv 3$ (mod 4) if $n$ is even, and $\frac {n+3}2\equiv 3$ (mod 4) if $n$ is odd. 

Let $P$ be the Siegel parabolic subgroup of $G$ with the Levi decomposition 
$P=MN$ where $M=\{\diag(t,m,t^{-1})\ |\ t\in \GL_1,\ m\in \SO(2,n)\}\simeq \GL_1\times \SO(2,n)$ and $N\simeq \mathbb{G}^{n+2}_a$ where $\SO(2,n)$ is defined by $J_{2,n}$. 
Let $V'$ be the quadratic space associated to $J_{2,n}$ with the quadratic map 
$q:V'\lra \mathbb{G}_a$ and we identify $V'$ with $N$. 
We also denote by $(\ast,\ast)_{V'}$ the corresponding symmetric bilinear pairing. 
Let $V''$ be the quadratic space associated to $J_{1,n-1}=
{\rm antidiag}(1,-A,1)$.

In \cite[Section 4]{Po}, Pollack defined the Eisenstein series $E_l$ on $G(\A)$ of weight $l$ with respect to $G(\widehat{\Z})$ and computed the unramified and archimedean parts of its Fourier coefficients. He also proved the algebraicity of these Fourier coefficients. 

Building on his work, we give a more explicit computation of the Siegel series arising from the bad finite places. Since this computation is rather involved, we postpone it to Section \ref{Siegel}. As a consequence, we show that the Fourier coefficients of the Eisenstein series have bounded denominators. More precisely,
for $g=\gamma k_f g_\infty\in G(\A)=G(\Q)(G(\widehat{\Z})\times G(\R))$, 
$$
E_l(g)=E_0(g)+\sum_{0\neq\eta\in V'(\Q)\atop q(\eta)> 0}a_{E_l}(\eta)\mathcal W_{2\pi \eta}(g_\infty),
$$
where $E_0(g)$ is the constant term, and $\mathcal W_{2\pi \eta}$ is Pollack's spherical function 
associated to the additive character $e^{2\pi \sqrt{-1}(\eta,\ast)_{V'}}$ on $V'(\R)$ 
(see \cite[Section 3]{Po} but we use a slightly different normalization as explained later). We remark that $a_{E_l}(\eta)=0$ unless 
$\eta\in V'(\Z)$. 

\begin{theorem}\label{mainthms}{\rm(}Theorem \ref{FCEisenEven} and Theorem \ref{FCEisenOdd}{\rm)}  Let $l>n+1$ be an even integer. 
\begin{enumerate}
\item Assume $n$ is even ($n\geq 2$). 
Then, for each $\eta\in V'(\Q)$ with $q(\eta)>0$ and each rational prime $p$, there exists a Laurent polynomial $\widetilde{Q}_{\eta,p}(X_p):=\widetilde{Q}_{\eta,\Phi_p}(X_p)\in \Z[X_p,X^{-1}_p]$ 
depending on $\eta$ and the unramified Schwartz function $\Phi_p={\rm char}_{V'(\Z_p)} \in \mathcal{S}(V'(\Q_p))$ satisfying 
$\widetilde{Q}_{\eta,p}(X_p)=\widetilde{Q}_{\eta,p}(X^{-1}_p)$ such that 
$$a_{E_l}(\eta)=C_{l,n}q(\eta)^{\frac{l-\frac{n}{2}}{2}}\ds\prod_{p}
\widetilde{Q}_{\eta,p}(p^{\frac{l-\frac{n}{2}}{2}})$$
where the constant $C_{l,n}$ is given explicitly in Theorem \ref{FCEisenEven}. 
\item Assume that n is odd, in which case we need a slight modification. 
Write $q(\eta)=\frak d_{\eta}\frak f_\eta^2$ such that $\frak d_\eta$ is the absolute discriminant of $\Bbb Q\left(\sqrt{\epsilon q(\eta)}\right)/\Bbb Q$, where $\epsilon=\begin{cases} 1, &\text{if $q(\eta)_1\equiv 1$ (mod 4)}\\ -1, &\text{if $q(\eta)_1\equiv 3$ (mod 4)}\end{cases}$, and $q(\eta)=2^{v_2(\eta)}q(\eta)_1$, $q(\eta)_1$ odd. Let $\chi_\eta$ be the primitive Dirichlet character corresponding to $\Bbb Q\left(\sqrt{\epsilon q(\eta)}\right)/\Bbb Q$. Then for each rational prime $p$, there exists a Laurent polynomial 
$\widetilde{Q}_{\eta,p}(X_p):=\widetilde{Q}_{\eta,\Phi_p}(X_p)\in \Z[X_p,X^{-1}_p]$ depending on $\eta$ and $\Phi_p={\rm char}_{V'(\Z_p)}$ satisfying 
$\widetilde{Q}_{\eta,p}(X_p)=\widetilde{Q}_{\eta,p}(X^{-1}_p)$ such that
$$a_{E_l}(\eta)=
C_{l,n}' L(\tfrac {n+1}2-l,\chi_\eta) \frak f_\eta^{l-\frac {n}2} \prod_p \widetilde 
 Q_{\eta,p}(p^{l-\frac {n}2}),
$$
where $C_{l,n}'$ is given in Theorem \ref{FCEisenOdd}. 
\end{enumerate}
\end{theorem}

We organize this paper as follows. 
In Section \ref{AFSO}, we recall Pollack's definition of certain types of automorphic forms on $G(\A)$ \cite{Po}. 
In Section \ref{Eisen}, we recall the definition of Eisenstein series, and by assuming the explicit calculation of the Siegel series in 
Section \ref{Siegel}, we write down the formula for the Fourier coefficients of the Eisenstein series. We also compute the Fourier coefficients at rank one index and show that a 
partial sum is related to a vector valued (non-holomorphic) Eisenstein series on $\SL_2(\A)$. 

\smallskip  

\textbf{Acknowledgments.} We would like to thank Tamotsu Ikeda, Hidenori Katsurada, 
Sungmun Cho, Tadashi Miyazaki and Jim Arthur for helpful discussions, and Aaron Pollack and Yi Shan for pointing out some inaccuracies. 
We thank KIAS in Seoul and Waseda University in Tokyo for their incredible hospitality during this research. 

\smallskip

\section{Preliminaries}\label{pre}
For each quadratic space $W$ or its representation matrix $A$, the corresponding symmetric pairing is denoted by $(\ast,\ast)_W$ or $(\ast,\ast)_A$. 

For each integer $n\ge 2$, let $A$ be a positive definite symmetric matrix in 
$M_{n-2}(\Z)$ as defined in Section \ref{intro}. Put  
$$J_{1,n-1}=\left(\begin{array}{ccc}
 0 & 0 & 1 \\
 0 & -A  & 0  \\ 
1 & 0 & 0 
\end{array}\right),\ J_{2,n}=\left(\begin{array}{ccc}
 0 & 0 & 1 \\
 0 & J_{1,n-1}  & 0  \\ 
1 & 0 & 0 
\end{array}\right)
,\ 
 J_{3,n+1}=\left(\begin{array}{ccc}
 0 & 0 & 1 \\
 0 & J_{2,n}  & 0  \\ 
1 & 0 & 0 
\end{array}\right).$$

Let $V'=
\mathbb{G}^{n+2}_a$ be the quadratic space associated to 
$J_{2,n}$. 
Let $V=\mathbb{G}_a e\oplus V'\oplus \mathbb{G}_a f=\mathbb{G}^{n+4}_a$  be the quadratic space associated to 
$J_{3,n+1}$ where 
$$e:=(1,\overbrace{0,\ldots,0}^{n+3}),\ f:=(\overbrace{0,\ldots,0}^{n+3},1)$$
so that $(e,f)_V=\frac{1}{2}eJ_{3,n+1}{}^t f=\frac{1}{2}$. Let $q$ be the quadratic form on $V'$ 
defined by $q(x)=\frac{1}{2}xJ_{2,n} {}^t x$ for $x\in V'$. 
Denote by $$(x,y)=q(x+y)-q(x)-q(y)$$ for $x,y\in V'$ the associated bilinear form. The quadratic form $\tilde q$ on $V$ for $J_{3,n+1}$ 
satisfies $\tilde q(\alpha e+v'+\beta f)=\alpha\beta+q(v')$.

Let
$$G=\SO(V)=\SO(3,n+1)=\{g\in \SL_{n+4}\ |\ {}^t g J_{3,n+1}g=J_{3,n+1}\}$$ and we also consider 
$$\SO(V')=\SO(2,n)=
\{g\in \SL_{n+2}\ |\ {}^t g J_{2,n}g=J_{2,n}\}.
$$

The natural embedding $V'\hookrightarrow V,\ v'\mapsto (0,v',0)$ yields an embedding 
$$\SO(V')\hookrightarrow \SO(V),\ g'\mapsto 
\left(\begin{array}{ccc}
 1 & 0& 0 \\
 0 & g'  & 0  \\ 
0 & 0 & 1 
\end{array}\right).$$ 

Throughout this paper, we assume $G=\SO(V)$ splits at any finite place of $\Q$ as in Section \ref{intro}.

\begin{remark}\label{difPo}
Our definition of $q$ is slightly different from the one in \cite{Po}, which we recall in Section \ref{infty}.  
Then, our $\SO(V')$ is isomorphic over $\R$ to Pollack's one but not over $\Q$ 
since any normalized orthogonal transformation matrices have to involve $\sqrt{2}$. 
\end{remark}

Let $P=MN$ be the Siegel parabolic subgroup, where 

$$M=\{\diag(t,m,t^{-1})\in G\ |\ t\in \GL_1,\ m\in \SO(V')\}\simeq \GL_1\times \SO(V')$$ with 
$\nu:M\lra \GL_1,\ \diag(t,m,t^{-1})\mapsto t$, 
and 
$$N=\left\{n(\textbf{x}):=
\left(\begin{array}{ccc}
 1 & -\textbf{x}J_{2,n} & -\frac{1}{2}\textbf{x}J_{2,n}{}^t\textbf{x} \\
 0 & 1_{n+2}  & {}^t\bf{x}  \\ 
0 & 0 & 1 
\end{array}\right) \Bigg|\ \textbf{x}\in \mathbb{G}^{n+2}_a
\right\}\stackrel{\sim}{\lra}V',\ n(x)\mapsto x.$$
Then, $\diag(t,m,t^{-1})n\in P$ acts on $x\in V'$ by $t^2(m\cdot x)$ where 
 $m\cdot x$ stands for the usual matrix multiplication. Once a suitable Haar measure is chosen, 
the modulus character of $P$ is given by $\delta_P(\diag(t,m,t^{-1}))=|t|^{n+2}$.

\section{Automorphic forms on $G=\SO(3,n+1)$}\label{AFSO}

Recall $G=\SO(V)$. 
The maximal compact subgroup $K$ of $G(\Bbb R)$ is $S({\rm O}(3)\times {\rm O}(n+1))$. 
Let $K_0=\SO(3)\times \SO(n+1)$.  
The projection onto the first factor induces a surjective homomorphism $S({\rm O}(3)\times {\rm O}(n+1))\rightarrow {\rm O}(3)=({\rm SU}(2)/\mu_2)\rtimes \{\pm 1\}$. For each $\lambda\in \C$, we consider the normalized induced representation 
$$I(\lambda):={\rm Ind}^{G(\R)}_{P(\R)} \, |\nu|^{\lambda-\frac{n}{2}-1}.$$

By the Peter-Weyl theorem, we have 
$${\rm Ind}^{K_0}_{K_0\cap P(\R)}\mathbf{1}\simeq 
C^\infty((S^2\times S^{n})/\{\pm 1\})\simeq \bigoplus_{a,b\ge 0\atop a+b:\, {\rm even}}
\mathcal{H}^{a,b},\ 
\mathcal{H}^{a,b}:=\mathcal{H}^a(\C^3)\boxtimes \mathcal{H}^b(\C^{n+1})$$
where $S^k$ stands for the $k$-dimensional sphere and $\mathcal{H}^i(\C^j)$ 
stands for the harmonic polynomials on $\C^j$ of degree $i$. Using this, we have 
\begin{equation}\label{K-type}
I(\lambda)|_K= \bigoplus_{a,b\ge 0\atop a+b:\, {\rm even}}{\rm Ind}^K_{K_0}
\mathcal{H}^{a,b}.
\end{equation}
If the twist of $\mathcal{H}^{a,b}$ by $K/K_0$ is isomorphic to $\mathcal{H}^{a,b}$, then 
$${\rm Ind}^K_{K_0}
\mathcal{H}^{a,b}=\mathcal{H}^{a,b,+}\oplus \mathcal{H}^{a,b,-}$$
where $\mathcal{H}^{a,b,\ve}$ is an extension of $\mathcal{H}^{a,b}$ to $K$ with the 
action of a character $\ve:K/K_0\lra \{\pm 1\}$. Otherwise, ${\rm Ind}^K_{K_0}
\mathcal{H}^{a,b}$ is irreducible. 
 In particular, if $b=0$ (hence $a$ is even, say $a=2l$), then 
$\mathcal{H}^{a,b}$ admits a natural extension to a representation 
$\mathcal{H}^{a,b,+}$ of $K$; by abuse of notation, we denote this extension again by 
$\mathcal{H}^{a,b}$. It appears in $I(\lambda)|_K$ with multiplicity one.

\begin{defin}\label{MFG} Let $l\ge 0$ be an integer. Let 
 $\Bbb V_l={\rm Sym}^{2l}(\Bbb C^2)$ be the $(2l+1)$-dimensional representation of $K$ that factors through ${\rm O}(3)$ and we regard it with 
 $\mathcal{H}^{2l}(\C^3)\boxtimes \mathbf{1}$ as a unique minimal $K$-type of $I(l)$. 
We fix a basis $\{[X]^{l+v}[Y]^{l-v}\}_{v=-l}^l$ of $\Bbb V_l$ where  $[X^k]=\frac {X^k}{k!}$ and  $[Y^k]=\frac {Y^k}{k!}$ for $k\ge 0$. 
Modular forms on $G$ of weight $l$ are $\Bbb V_l$-valued automorphic functions $\phi$ on $G(\Bbb A)$ that satisfy:

(1) $\phi(gk)=k^{-1}\phi(g)$ for all $g\in G(\Bbb A)$ and $k\in K$. 

(2) $\phi$ is annihilated by a special differential operator $\mathcal D_l$. 

(3) As a $(\frak g, K)$-module, $\phi$ generates an irreducible constituent of $I(l+1)$. 
\end{defin}

\begin{theorem}\cite{Po} Suppose $\phi$ is a modular form of weight $l\geq 1$ on $G$. Then 
$$\phi(g)=\phi_0(g)+\sum_{\eta\in V'(\Bbb Q)\atop q(\eta)\geq 0, \eta\neq 0} a_\phi(\eta)(g_{\f})\mathcal W_{2\pi \eta}(g_\infty),
$$
for $g=g_{\f}g_\infty$ in $G(\Bbb A_{\f})\times G(\Bbb R)$, where $a_\phi(\eta): G(\Bbb A_{\f})\longrightarrow \Bbb C$ is a locally constant function. Moreover,
$$\phi_0(m)=\Phi(m)X^{2l}+\beta(m_f)X^lY^l+\Phi'(m)Y^{2l},
$$
where $\Phi$ is an automorphic function associated to a holomorphic modular form of weight $l$ on $M$, $\beta$ is a locally constant function on $M(\Bbb A_{\f})$, and $\Phi'$ is a certain $(K\cap M)$-right translate of $\Phi$.
\end{theorem}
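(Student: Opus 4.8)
The plan is to Fourier-expand $\phi$ along the unipotent radical $N$ of the Siegel parabolic $P=MN$ --- which by construction is abelian, $N\simeq V'$ --- and then to use the right $K$-equivariance $\phi(gk)=k^{-1}\phi(g)$ together with the equation $\mathcal D_l\phi=0$ to pin down the archimedean shape of each Fourier coefficient. We treat left $G(\Q)$-invariance and moderate growth as part of the standing definition of a modular form on $G$.

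\emph{Step 1: expansion along $N$.} Fix a nontrivial additive character $\psi=\prod_v\psi_v$ of $\A/\Q$ with $\psi_\infty(t)=e^{2\pi\sqrt{-1}\,t}$. Since $N(\Q)\subset G(\Q)$, for each fixed $g\in G(\A)$ the map $x\mapsto\phi(n(x)g)$ is a smooth $\mathbb V_l$-valued function on the compact abelian group $V'(\Q)\backslash V'(\A)$, hence admits an absolutely convergent Fourier expansion
$$\phi(n(x)g)=\sum_{\eta\in V'(\Q)}\phi^{(\eta)}(g)\,\psi\big((\eta,x)_{V'}\big),\qquad \phi^{(\eta)}(g)=\int_{V'(\Q)\backslash V'(\A)}\phi(n(x)g)\,\psi\big(-(\eta,x)_{V'}\big)\,dx,$$
where we use that $(\ ,\ )_{V'}$ is nondegenerate, so $\eta\mapsto\psi((\eta,\cdot)_{V'})$ identifies $V'(\Q)$ with the Pontryagin dual of $V'(\Q)\backslash V'(\A)$. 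Each $\phi^{(\eta)}$ is again smooth, still satisfies $\phi^{(\eta)}(gk)=k^{-1}\phi^{(\eta)}(g)$ and $\mathcal D_l\phi^{(\eta)}=0$ (both $K$ and $\mathcal D_l$ act on the right, commuting with the average over the left $N$), and transforms on the left by $\phi^{(\eta)}(n(x)g)=\psi((\eta,x)_{V'})\,\phi^{(\eta)}(g)$.

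\emph{Step 2: nonzero frequencies.} Fix $\eta\neq0$ and $g_\f\in G(\A_\f)$. Then $g_\infty\mapsto\phi^{(\eta)}(g_\f g_\infty)$ is a $\mathbb V_l$-valued function on $G(\R)$ that transforms on the left under $N(\R)$ by the character $x\mapsto e^{2\pi\sqrt{-1}\,(\eta,x)_{V'}}$, is right $K$-equivariant as above, is annihilated by $\mathcal D_l$, and is of moderate growth. By Pollack's analysis of this system \cite[Section 3]{Po}, the space of such functions is one-dimensional when $q(\eta)\geq0$ --- spanned by the spherical (generalized Whittaker) function $\mathcal W_{2\pi\eta}$ --- and is zero when $q(\eta)<0$. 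Hence there is a scalar $a_\phi(\eta)(g_\f)$, independent of $g_\infty$, with $\phi^{(\eta)}(g_\f g_\infty)=a_\phi(\eta)(g_\f)\,\mathcal W_{2\pi\eta}(g_\infty)$, and $a_\phi(\eta)\equiv0$ unless $q(\eta)\geq0$; this forces the sum to run only over $q(\eta)\geq0$. Smoothness of $\phi$ --- right invariance under some compact open subgroup of $G(\A_\f)$ --- makes $a_\phi(\eta)$ locally constant.

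\emph{Step 3: the constant term, and the main obstacle.} The term $\phi_0:=\phi^{(0)}$ is left $N(\A)$-invariant, and its shape on $M(\A)$ is governed by $\mathcal D_l$ restricted to $M=\GL_1\times\SO(V')$. One shows that this restriction forces $\phi_0|_{M(\A)}$ into the three weight vectors $x^{2l}$, $x^ly^l$, $y^{2l}$ of $\mathbb V_l$, and writing $\phi_0(m)=\Phi(m)x^{2l}+\beta(m)x^ly^l+\Phi'(m)y^{2l}$: on the top component the equation becomes the Cauchy--Riemann system defining holomorphic modular forms of weight $l$ on the Hermitian symmetric domain attached to $\SO(V')$, so $\Phi$ is the adelization of such a form; on the bottom component it forces $\Phi'$ to be the translate of $\Phi$ by a lift to $K\cap M$ of the involution $x\leftrightarrow y$; and on the middle component the residual flat part of $\mathcal D_l$ makes $\beta$ independent of the archimedean variable, i.e. $\beta(m)=\beta(m_\f)$. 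Assembling Steps 1--3 gives the asserted expansion. The main obstacle is exactly the archimedean input used here and in Step 2: establishing that the $\mathcal D_l$-system with prescribed $N(\R)$-character and $K$-type has solution space of dimension one for $q(\eta)\geq0$ and zero otherwise (this both makes $\mathcal W_{2\pi\eta}$ well defined and produces the support condition $q(\eta)\geq0$), and disentangling the three surviving $K$-components of the constant term together with the holomorphy of $\Phi$ --- these are Pollack's local computations, which we take as given.
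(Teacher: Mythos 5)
The paper itself does not prove this statement---it is imported directly from Pollack \cite{Po}---and your outline (Fourier expansion along the abelian radical $N\simeq V'$, multiplicity one for the moderate-growth solutions of the $\mathcal D_l$-system with prescribed $N(\R)$-character at nonzero frequencies, and a separate analysis of the constant term on $M$) is exactly the structure of Pollack's argument. Since you explicitly defer the two genuinely hard archimedean inputs (the dimension count giving $\mathcal W_{2\pi\eta}$ for $q(\eta)\geq 0$ and vanishing for $q(\eta)<0$, and the constant-term/holomorphy analysis) to \cite{Po}, your proposal is correct as a reduction to the cited results and matches the paper's treatment of this theorem.
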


Here $\mathcal W_{\eta}(g): G(\Bbb R)\longrightarrow \Bbb V_l$ is a generalized Whittaker function of type $\eta$ satisfying 
\begin{enumerate}
\item $\mathcal W_\eta(n(x)g)=e^{i(\eta,x)} \mathcal W_\eta(g)$;

\item $\mathcal W_\eta(gk)=k^{-1} \mathcal W_\eta(g)$;

\item $D_l \mathcal W_\eta(g)=0$;

\item Suppose $\eta\in V'(\Bbb R)$, $\eta\ne 0$, and $(\eta,\eta)\geq 0$. For  $t\in GL_1(\Bbb R), m\in \SO(V')(\Bbb R)$, set
\begin{equation}\label{ourueta}
u_\eta(t,m)=it\sqrt{2}(\eta,m(iv_1-v_2))
\end{equation}
where 
$$v_1:=(1,\overbrace{0,\ldots,0}^{n},1),\ v_2:=(0,1,\overbrace{0,\ldots,0}^{n-2},1,0).
$$
 Then
\begin{equation}\label{PollackSpherical}
\mathcal W_\eta(t,m)=t^l |t| \sum_{-l\leq v\leq l} \left(\frac {|u_\eta(t,m)|}{u_\eta(t,m)}\right)^v K_v(|u_\eta(t,m)|) [X^{l+v}][Y^{l-v}],
\end{equation}
where $K_v$ is the $K$-Bessel function defined by
$$K_v(y)=\frac 12\int_0^\infty e^{-y(t+t^{-1})/2} t^v\, \frac {dt}t.
$$
\end{enumerate}

By definition, vectors $\eta\in V'(\Bbb Q)$ with $q(\eta)>0$ are anisotropic vectors and they are called rank 2 elements in \cite{Po}. 

\section{Eisenstein series}\label{Eisen} 
In this section, we follow \cite[Section 4]{Po}.  
For any $l>n+1$ and a Schwartz-function $\Phi_{\f}$ on $V(\A_{\bf f})$, we define a section 
$f$ of ${\rm Ind}^{G(\A)}_{P(\A)}\, |\nu|^{s-\frac {n+2}2}$ (normalized induction) by 
$$f_l(g,\Phi_{\f},s)=f_{{\rm fte}}(g_{\f},\Phi_{\f},s)f_{l,\infty}(g_\infty,s),\ 
g=g_{\f}g_\infty\in G(\A)
$$
where 
$$f_{{\rm fte}}(g_{\f},\Phi_{\f},s)=\int_{\GL_1(\A_{\f})}|t|^s\Phi_{\f}(tg_{\f}^{-1}e)dt,\ g_{\f}\in 
G(\A_{\f})
$$
and $f_{l,\infty}(g_\infty,s)$ is the $V_l={\rm Sym}^{2l}(\C^2)$-valued, $K$-equivalent 
section of ${\rm Ind}^{G(\R)}_{P(\R)}\, |\nu|^{s-\frac {n+2}2}$. See Section \ref{infty} for a precise definition.
Then, we define the associated Eisenstein series as 
$$E_l(g,\Phi_{\f},s):=\sum_{\gamma\in P(\Q)\bs G(\Q)}f(\gamma g,\Phi_{\f},s),\ g\in G(\A).
$$
By definition, $E_l(g,\Phi_{\f},s)=E_l(g_\infty,g_{\f}\cdot \Phi_{\f},s)$ where 
$g_{\f}\cdot \Phi_{\f}$ is defined by 
$g_{\f}\cdot \Phi_{\f}(v)=\Phi_{\f}(g^{-1}_{\f}v)$ for $v\in V(\A)$. It converges absolutely for $Re(s)>n+2$.
When $s=l+1$ and $l+1>n+2$, $E_l(g,\Phi_\f,s=l+1)$ gives rise to a modular form of weight $l$, and it has the Fourier expansion as 
\begin{equation}\label{EFE}
E_l(g,\Phi_{\f})=E_0(g)+\sum_{0\neq \eta\in V'(\Q)\atop q(\eta)\ge 0}
a_{E_l}(\eta)(g_{\f}\cdot \Phi_{\f})\mathcal{W}_{2\pi \eta}(g_\infty)
\end{equation}
where $E_0(g)$ is the constant term, and $\mathcal{W}_{2\pi \eta}(g_\infty)$ is the Pollack's spherical function.
Now we assume $\Phi_{\f}=\otimes_p' \Phi_p\otimes \Phi_\infty$, and $\Phi_p$ is unramified for all $p$ (hence, $\Phi_{\f}$ is the characteristic function of $V'(\widehat{\Z})$). By the strong approximation, 
we have 
\begin{equation}\label{EFE1}
E_l(g,\Phi_{\f})=E_0(g)+\sum_{0\neq \eta\in V'(\Q)\atop q(\eta)\ge 0}
a_{E_l}(\eta)(\Phi_{\f})\mathcal{W}_{2\pi \eta}(g_\infty)
\end{equation}
for $g=\gamma k_\f g_\infty \in G(\A)=G(\Q)(G(\widehat{\Z})\times G(\R))$, since 
$k_\f\cdot \Phi_\f=\Phi_\f$.

We compute explicitly $a_{E_l}(\eta):=a_{E_l}(\eta)(\Phi_{\f})$.

\subsection{Rank 2 Fourier coefficients}
If $\eta$ is of rank 2, i.e., $q(\eta)\ne 0$, 
\begin{equation}\label{FCSS}
a_{E_l}(\eta)(\Phi_{\f})\mathcal{W}_{2\pi \eta}(g_\infty)=J(s,\eta,f_l)=\int_{V'(\Bbb A)} 
 \psi((\eta,x)) f_l(w n(x), \Phi_\f,s)\, dx.
\end{equation}
where $\psi=\otimes'_p\psi_p:\Q\bs \A\lra \C^\times$ is the standard additive character. 
Write
$$J(s,\eta,f_l)=\prod_p J(s,\eta,\Phi_p),\ J(s,\eta,\Phi_p):=
\int_{V'(\Q_p)}\psi_p((\eta,x)) f_l(w n(x), \Phi_\f,s)\, dx
$$
where $w\in G=\SO(V)$ is defined by $w\cdot e=f$, $w\cdot f=e$, and $w|_{V'}={\rm id}_{V'}$.  
Notice that $J(s,\eta,\Phi_p)$ is also defined for $\eta\in V'(\Q_p)$ with $q(\eta)\neq 0$.  
Further, by \cite[p.633, line -6]{Po}, $J(s,\eta,\Phi_p)=
\ds\int_{\GL_1(\Q_p)}\int_{V'(\Q_p)}\psi_p((\eta,x))\Phi_p(t,tx,-tq'(x)))|t|^s_p\, dxdt$. 
Since $t$ runs over $\Z^\times_p$, $\Phi_p(t,tx,-tq'(x)))$ is preserved under the transformation 
$x\mapsto x+x_0$ for any $x_0\in V'(\Z_p)$. By the change of variables, 
$J(s,\eta,\Phi_p)=\psi((\eta,x_0))J(s,\eta,\Phi_p)$ for any $x_0\in V'(\Z_p)$. Thus, 
$J(s,\eta,\Phi_p)=0$ if $\eta\not\in V'(\Z_p)$. 

 
\subsubsection{$p$-adic computation: Siegel series}\label{p-adic}
Let $\psi:\Q^\times_p\lra \C^\times$ be the standard additive character and henceforth 
we simply write it by $\psi(x)=e^{2\pi i x}$ (though $x$ should be the fractional 
part of $x$ to be more precise). 
For $\Phi_p={\rm char}_{V'(\Z_p)}$ and $\eta\in V'(\Q_p)$ with $q(\eta)\neq 0$, we will compute the Siegel series: 
$$J(s,\eta,\Phi_p)=\sum_{r=0}^\infty p^{-rs} \int_{p^{-r}V'(\Bbb Z_p)} \psi((\eta,x)) \text{char}(p^r q(x)\in\Bbb Z_p)\, dx.
$$ 

We denote the integral by $B_{r,\eta}$. 
Then by the change of variables,
$$B_{r,\eta}=\int_{p^{-r}V'(\Bbb Z_p)} \psi((\eta,x)) \text{char}(p^r q(x)\in\Bbb Z_p)\, dx
=p^{r(n+2)}\int_{V'(\Bbb Z_p)\atop q(x)\equiv 0 \, \text{mod $p^r$}} \psi\Big(\frac {(\eta,x)}{p^r}\Big)\, dx.
$$

Set $x=u+y$, where $u\in V'(\Bbb Z_p/p^r\Bbb Z_p)$, and $y\in V'(p^r\Bbb Z_p)$. If $y\equiv 0$ (mod $p^r$), $q(y)\equiv 0$ (mod $p^r$). It follows from this that, upon making the change of variables in $y$, the factor $p^{-r(n+2)}$ arises, cancelling the factor $p^{r(n+2)}$ obtained earlier. Thus  
\begin{equation}\label{main}
B_{r,\eta}=\sum_{u\in V'(\Bbb Z/p^r\Bbb Z)\atop q(u)\equiv 0\, \text{(mod $p^r$)}} \psi\Big(\frac {(\eta,u)}{p^r}\Big).
\end{equation}

When $r=0$, we have the integral
$$\int_{V'(\Bbb Z_p)} \psi((\eta,x))\, dx.
$$
Then it is $\begin{cases} 1, &\text{if $\eta\in V'(\Bbb Z_p)$}\\ 0, &\text{otherwise}\end{cases}$.

If $r\geq 1$, we need to compute
$$
\sum_{u\in V'(\Bbb Z/p^r\Bbb Z)\atop q(u)\equiv 0\, \text{(mod $p^r$)}} \psi\Big(\frac {(\eta,u)}p\Big).
$$

We need to divide into three cases:

\begin{theorem}\label{Siegel-even} Suppose $n$ is even and let ${\rm dim}V'=n+2=2m$. Let $p^{k}\|\eta$, and $p^{2k+k'}\|q(\eta)$. Then $$J(s,\eta,\Phi_p)=(1-p^{m-1-s})Q_{\eta,p}(p^{m-s}),$$ where $Q_{\eta,p}(X)\in \Bbb Z[X]$ is a polynomial of degree $2k+k'$, and it satisfies the functional equation 
$$
X^{2k+k'}Q_{\eta,p}(X^{-1})=Q_{\eta,p}(X).
$$
\end{theorem}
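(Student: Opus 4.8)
The plan is to compute $J(s,\eta,\Phi_p)$ directly from the reduction already carried out. By \eqref{main} we have $J(s,\eta,\Phi_p)=\sum_{r\ge 0}p^{-rs}B_{r,\eta}$, with $B_{0,\eta}=1$ (we may assume $\eta\in V'(\Z_p)$, for otherwise the whole integral vanishes) and, for $r\ge 1$,
$$B_{r,\eta}=\sum_{\substack{u\in V'(\Z/p^r\Z)\\ q(u)\equiv 0\ \text{(mod $p^r$)}}}\psi\!\Big(\tfrac{(\eta,u)}{p^r}\Big).$$
So everything reduces to evaluating these twisted exponential sums and reassembling the series in $p^{-s}$.

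First I would detect the congruence $q(u)\equiv 0$ by an auxiliary additive character, using $\mathbf 1[q(u)\equiv 0\ (p^r)]=p^{-r}\sum_{c\in\Z/p^r\Z}\psi(cq(u)/p^r)$, so that
$$B_{r,\eta}=\frac1{p^r}\sum_{c\in\Z/p^r\Z}\ \sum_{u\in V'(\Z/p^r\Z)}\psi\!\Big(\tfrac{c\,q(u)+(\eta,u)}{p^r}\Big).$$
For each $c$ the inner sum is a quadratic Gauss sum with a linear term: writing $c=p^{j}c'$ with $c'$ a unit modulo $p^{r-j}$ and completing the square turns it into $\psi\big(-\overline{c'}\,q(\eta)/p^{r-j}\big)$ times the plain Gauss sum $\sum_{v}\psi(c'q(v)/p^{r-j})$ of $q$ over $\Z/p^{r-j}\Z$ — with extra care when $p^{j}\nmid\eta$, where one first peels off the summands of $u$ on which the square cannot be completed and evaluates them separately. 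Here the standing hypothesis that $G$ splits at every finite place is what makes the answer clean: it forces $q$ to be unimodular over $\Z_p$ for odd $p$ and even of hyperbolic type over $\Z_2$ (the even unimodular matrix $A$, of rank divisible by $8$, being $2$-adically hyperbolic), so that the relevant Gauss sum of $q$ over $\Z/p^{e}\Z$ is just $p^{e(n+2)/2}$, the attached fourth root of unity being trivial because $n+2$ is even. For a general $A$ one would instead collect extra local $L$-factors at the bad primes.

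The second step is the bookkeeping. Plugging the Gauss sum evaluations back in and summing over the strata $j=v_p(c)$ and then over $r$, the series collapses. Writing $p^{k}\|\eta$ and $p^{2k+k'}\|q(\eta)$, the strata with $r>2k+k'$ contribute only through one stable ratio, which after the substitution $X=p^{m-s}$ is precisely the factor $1-p^{m-1-s}$, while the finitely many strata with $r\le 2k+k'$ supply the remaining monomials; collecting everything gives $J(s,\eta,\Phi_p)=(1-p^{m-1-s})Q_{\eta,p}(X)$ with $Q_{\eta,p}\in\Z[X]$, and reading off the $r=2k+k'$ stratum shows that its leading coefficient is a nonzero integer, so $\deg Q_{\eta,p}=2k+k'$ exactly. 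I expect the main obstacle to be the prime $p=2$ — the whole argument has to be rerun with the explicit $2$-adic Jordan splitting and with the genuine $2$-adic Gauss sums kept track of — together with the delicate handling of the strata with $p\mid\eta$: those strata are exactly what produces the $2k$ in the degree, so one must control them precisely enough to pin down the degree and not merely bound it.

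Finally, for the functional equation $X^{2k+k'}Q_{\eta,p}(X^{-1})=Q_{\eta,p}(X)$ I would read it off the closed form just obtained, where it shows up as the symmetry $j\leftrightarrow(2k+k')-j$ of the strata once the powers of $q(\eta)$ are normalised out; a useful cross-check is that $J(s,\eta,\Phi_p)$ is the local Whittaker functional at $\eta$ of the degenerate principal series $\Ind_{P(\Q_p)}^{G(\Q_p)}|\nu|^{s-\frac{n+2}{2}}$, so the standard intertwining operator relates it to $J(n+2-s,\eta,\Phi_p)$ up to the explicit abelian $\gamma$-factor carried by $N\simeq V'$, and under $s\mapsto n+2-s$, i.e. $X\mapsto X^{-1}$, that $\gamma$-factor is the monomial $X^{2k+k'}$. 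The remaining (odd $n$) cases follow the same scheme; there $q$ has odd rank, so the basic Gauss sum evaluation carries a quadratic character — this is the source of the Dirichlet character $\chi_\eta$ and of the value $L(\tfrac{n+1}2-l,\chi_\eta)$ that replaces the bare zeta factor in that case.
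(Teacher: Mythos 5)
Your overall strategy is the same as the paper's: detect the congruence $q(u)\equiv 0\ (\mathrm{mod}\ p^r)$ with an auxiliary additive character, evaluate the resulting complete exponential sums using the fact that the split hypothesis makes $V'(\Z_p)$ a hyperbolic unimodular lattice (the paper does this plane by plane via an elementary bilinear sum, Lemma \ref{Gauss-sum}, rather than by completing the square globally, but these are equivalent), and then sum the Ramanujan-type sums over the unit strata $j=v_p(c)$, which forces $p^j\mid\eta$, i.e.\ $j\le k$. Up to that point the proposal matches the paper.

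The genuine gap is in the final step, which is where the actual content of the theorem lies. Your claim that ``the strata with $r>2k+k'$ contribute only through one stable ratio, which \ldots is precisely the factor $1-p^{m-1-s}$'' misdescribes what happens: the sums terminate ($B_{r,\eta}=0$ for $r\ge 2k+k'+2$, and $B_{2k+k'+1,\eta}=-p^{(2k+k'+1)m-1}$), so $J(s,\eta,\Phi_p)$ is already a \emph{polynomial} in $p^{-s}$ of degree $2k+k'+1$, and the factor $1-p^{m-1-s}$ is not produced by an infinite tail or by the large-$r$ strata alone; it is a common divisor of the whole finite polynomial. Establishing that divisibility, that the quotient lies in $\Z[X]$, has degree exactly $2k+k'$, and is palindromic, requires the explicit evaluation of $B_{r,\eta}$ in all the ranges $r\le k$, $k<r\le k+k'$, $k+k'<r\le 2k+k'$, $r=2k+k'+1$, followed by the verification that $\sum_{r=0}^{a}B_{r,\eta}p^{(m-1)(a-r)}=0$ for $a\ge 2k+k'+1$ and the closed form \eqref{A_a} for the coefficients $A_{a,\eta}$ (whose symmetry $a\leftrightarrow 2k+k'-a$ is what gives $X^{2k+k'}Q_{\eta,p}(X^{-1})=Q_{\eta,p}(X)$); this is a nontrivial combinatorial identity which the paper verifies explicitly (even resorting to symbolic computation), not something that ``collapses'' automatically. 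Your stated symmetry $j\leftrightarrow(2k+k')-j$ of the strata cannot be literal, since the strata only run over $0\le j\le k$, and the proposed cross-check via the intertwining operator and its $\gamma$-factor is a heuristic, not a proof of the functional equation. A smaller imprecision: for odd $p$ the vanishing of the quadratic-character/root-of-unity factor in the Gauss sum is not a consequence of unimodularity and even rank alone; it uses that the lattice is actually hyperbolic over $\Z_p$ (which the split hypothesis does provide, and which is what the paper uses from the start).
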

Let
\begin{equation}\label{wQeven}
\widetilde Q_{\eta,p}(X)=X^{2k+k'}Q_{\eta,p}(X^{-2}).
\end{equation}
Then $\widetilde Q_{\eta,p}(X^{-1})=\widetilde Q_{\eta,p}(X)$.

Special case: $k=0, k'=0$. Then $J(s,\eta,\Phi_p)=1-p^{m-1-s}$.
If $k=0$, $k'=1$, $J(s,\eta,\Phi_p)=1+(p^m-p^{m-1})p^{-s}-p^{2m-1-2s}=(1-p^{m-1-s})(1+p^{m-s})$. Hence $Q_{\eta,p}(p^{m-s})=1+p^{m-s}$.
More generally, if $k=0$, $Q_{\eta,p}(p^{m-s})=\sum_{a=0}^{k'} (p^{m-s})^a$.

\begin{theorem}\label{Siegel-odd} Let $n$ be odd, 
${\rm dim}V'=n+2=2m+1$, and $p>2$. Let $p^k\|\eta$ and $p^{2k+k'}\|q(\eta)$, and $q(\eta)_1=\frac {q(\eta)}{p^{2k+k'}}$. 
Let $\chi_\eta(p)=\begin{cases} (\frac {q(\eta)_1}p), &\text{if $2k+k'$ even}\\ 0, &\text{if $2k+k'$ odd}\end{cases}$. Then
$$J(s,\eta,\Phi_p)=\frac {1-p^{2(m-s)}}{1-\chi_{\eta}(p) p^{m-s}} Q_{\eta,p}(p^{m-s}),
$$ 
where $Q_{\eta,p}(X)\in \Bbb Z[X]$ is a polynomial of degree $2\lfloor\frac {2k+k'}2\rfloor$, and it satisfies the functional equation 
$$
p^lX^{2l}Q_{\eta,p}(p^{-1}X^{-1})=Q_{\eta,p}(X),
$$
where $2k+k'=2l$ or $2l+1$.
\end{theorem}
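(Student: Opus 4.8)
The plan is to evaluate the series $J(s,\eta,\Phi_p)=\sum_{r\ge 0}p^{-rs}B_{r,\eta}$ directly from the closed formula \eqref{main} for the quadric exponential sums $B_{r,\eta}$, using that for $p>2$ the space $V'$ is diagonalizable over $\Z_p$ and (since $G$ splits, so $\det A=1$) in fact $\Z_p$-unimodular. First I would detect the congruence $q(u)\equiv 0\pmod{p^{r}}$ appearing in \eqref{main} by an additive character, getting
\[
B_{r,\eta}=\frac{1}{p^{r}}\sum_{c\bmod p^{r}}\ \sum_{u\in V'(\Z/p^{r}\Z)}\psi\!\left(\frac{cq(u)+(\eta,u)}{p^{r}}\right),
\]
and then complete the square in $u$. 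On the range where $c$ is invertible mod $p^{r}$ one has $cq(u)+(\eta,u)=cq(u+c^{-1}\eta)-c^{-1}q(\eta)$, so the inner sum equals $\psi(-c^{-1}q(\eta)/p^{r})$ times $\sum_{v\in V'(\Z/p^r\Z)}\psi(cq(v)/p^{r})$, and the latter factors into $n+2$ one-variable quadratic Gauss sums $\sum_{x\bmod p^{t}}\psi(bx^{2}/p^{t})$. The decisive point is that $\dim V'=n+2$ is \emph{odd}: the product of an odd number of such Gauss sums retains a single Legendre symbol $\left(\tfrac{\cdot}{p}\right)$ (it disappears when $n$ is even, which is why Theorem \ref{Siegel-even} has a simpler shape), and this surviving symbol is precisely what will produce the quadratic character $\chi_\eta$ and the factor $1-\chi_\eta(p)p^{m-s}$.

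Second I would carry out the $c$-summation, organized by $a:=v_{p}(c)$. For $c=p^{a}c_{0}$ with $c_{0}$ a unit and $a$ small, the square completes as above and the residual $c_{0}$-sum is a one-variable character sum whose value, by the classical Gauss and Ramanujan sum evaluations, is a power of $p$ times a power of $\chi_\eta(p)$, and which vanishes outside a range of $a$ dictated by $v_{p}(q(\eta))=2k+k'$ and its parity. The remaining $c$ (those with $p^{r-a}\mid c$, for which the square cannot be completed mod $p^{r}$) are handled by a further splitting $u=u_{0}+p^{r-a}u_{1}$: since $p^{-k}\eta$ is primitive, the $u_{1}$-sum vanishes, which both eliminates those terms and forces $B_{r,\eta}=0$ once $r>2k+k'+1$. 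Reindexing the surviving sum over $(r,a)$ by $a$ and $b:=r-a$, the $b$-summation is a finite geometric series producing $\dfrac{1-p^{2(m-s)}}{1-\chi_\eta(p)p^{m-s}}$ with $m=\tfrac{n+1}{2}$ and $\chi_\eta(p)=\left(\tfrac{q(\eta)_{1}}{p}\right)$ when $2k+k'$ is even and $0$ when it is odd; the remaining finite sum over $0\le a\le 2k+k'$ is the polynomial $Q_{\eta,p}(p^{m-s})$ with integer coefficients, of degree $2\lfloor\tfrac{2k+k'}{2}\rfloor$ (terms with $a$ in the complementary parity class contribute nothing, since their $c_{0}$-sum reduces to $\sum_{c_{0}\bmod p}\left(\tfrac{c_{0}}{p}\right)=0$).

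Finally, for the functional equation $p^{l}X^{2l}Q_{\eta,p}(p^{-1}X^{-1})=Q_{\eta,p}(X)$ (with $2k+k'\in\{2l,2l+1\}$) I would check the equivalent coefficient identity $c_{2l-j}=p^{l-j}c_{j}$ directly on the coefficients produced in the second step, by combining the reindexing $a\mapsto 2k+k'-a$ with the reciprocity $g_{p}^{2}=\left(\tfrac{-1}{p}\right)p$ of the quadratic Gauss sum $g_p=\sum_{x\bmod p}\psi(x^2/p)$; as a conceptual cross-check, the local integrals $J(s,\eta,\Phi_p)$ satisfy the functional equation of the standard intertwining operator for $\Ind_{P}^{G}|\nu|^{s-\frac{n+2}{2}}$ under $s\mapsto(n+2)-s$, and since $2m+1=n+2$ this is exactly the asserted symmetry of $Q_{\eta,p}$ once the $\chi_\eta$-factor is separated off. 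I expect the main obstacle to be the bookkeeping in the second step: carrying the normalizing constants $\varepsilon_{p}$ ($=1$ or $i$ according as $p\equiv 1$ or $3\bmod 4$) through the odd-length product of Gauss sums, and matching the vanishing of the $c_{0}$-sums to the three residue situations ($2k+k'$ odd; $2k+k'$ even with $q(\eta)_{1}$ a square mod $p$; $2k+k'$ even with $q(\eta)_{1}$ a nonsquare), so that the single clean quotient $\frac{1-p^{2(m-s)}}{1-\chi_\eta(p)p^{m-s}}$ emerges uniformly with $Q_{\eta,p}\in\Z[X]$.
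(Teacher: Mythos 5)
Your first stage is essentially the paper's: detect the congruence $q(u)\equiv 0\ (\mathrm{mod}\ p^r)$ with an additive character in an auxiliary variable, then evaluate the resulting sums by quadratic Gauss/Ramanujan--Sali\'e sums organized by the valuation of that variable (the paper uses the split form $u_1v_1+\cdots+u_mv_m+u_0^2$, Lemma \ref{Gauss-sum} and (\ref{p>2}) instead of diagonalizing and completing the square, but this is the same computation, and you correctly identify the odd variable $u_0^2$ as the source of $\chi_\eta$). The problems begin with your treatment of the degenerate range. Your claim that the $c$'s for which the square cannot be completed are eliminated ``since $p^{-k}\eta$ is primitive'' is false precisely when $r\le k$: there $p^r\mid\eta$, the character $u\mapsto\psi((\eta,u)/p^r)$ is trivial, and these terms contribute $p^{2rm}$ (the leading term of the paper's $B_{r,\eta}$ for $r\le k$). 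These terms are indispensable: for example with $p^k\|\eta$, $k=1$, $k'=0$, $\ve=\chi_\eta(p)$, one gets $J=1+p^mX+(p-1+\ve p^m)X^2+\ve pX^3=(1+\ve X)\bigl(1+(p^m-\ve)X+pX^2\bigr)$ with $X=p^{m-s}$, and the $p^mX$ term comes exactly from the range you discard; without it neither the divisibility by $1-\chi_\eta(p)X$ nor the functional equation holds.

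The second gap is the extraction mechanism. For fixed $a=v_p(c)$ the $b$-summation is a geometric series in $pX^2$ together with Sali\'e-type boundary terms at $b=2k+k'-2a+1$; it does not ``produce'' $\frac{1-p^{2(m-s)}}{1-\chi_\eta(p)p^{m-s}}$ (which is merely $1+\chi_\eta(p)X$ or $1-X^2$). What is true is that, once the degenerate head terms are correctly attributed, each fixed-$a$ block happens to be divisible by this factor --- but that divisibility, the fact that the quotient lies in $\Z[X]$ with the stated degree $2\lfloor\frac{2k+k'}2\rfloor$, and the symmetry $p^lX^{2l}Q_{\eta,p}(p^{-1}X^{-1})=Q_{\eta,p}(X)$ all require the explicit coefficient computation; this is exactly the bulk of the paper's proof (the vanishing of $C(k,k',b)$, resp.\ $C^\ve(k,k',b)$, for large $b$ via the $S_{\le k}+S_{\ge k+1}$ cancellation, and the closed forms (\ref{n-odd-C}), (\ref{n-odd-C-epsilon})). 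Also note $a$ is bounded by $k$, not $2k+k'$, and each $a$ contributes a polynomial block rather than a single coefficient, so ``the remaining finite sum over $0\le a\le 2k+k'$ is $Q_{\eta,p}$'' misidentifies the structure. Finally, the intertwining-operator functional equation is only a heuristic cross-check: it does not by itself separate off the correct rational factor or bound the degree, so it cannot replace the explicit verification. As written, the proposal skips or garbles the part of the argument that actually constitutes the proof.
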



Let 
\begin{equation}\label{wQodd}
\widetilde{Q}_{\eta,p}(X)=X^{-l} Q_{\eta,p}(p^{-\frac 12}X).
\end{equation}
Then $\widetilde{Q}_{\eta,p}(X^{-1})=\widetilde{Q}_{\eta,p}(X)$.

Special case: $k=0, k'=0$. Then $J(s,\eta,\Phi_p)=1+\Big(\frac {q(\eta)}p\Big) p^{m-s}$. 
If $k=0$, $k'=1$, $J(s,\eta,\Phi_p)=1-p^{2(m-s)}$, and so $Q_{\eta,p}(p^{m-s})=1$.
If $k=0$, $k'=2$, $Q_{\eta,p}(p^{m-s})=1-\Big(\frac {q(\eta)_1}p\Big) p^{m-s}+p^{2(m-s)+1}$.

More generally, if $k=0$, $k'=2l$ even, (let $\varepsilon=\Big(\frac {q(\eta)_1}p\Big)$),
Then 
$$J(s,\eta,\Phi_p)=1+\sum_{r=1}^l p^{2r(m-s)}(p^r-p^{r-1})+\varepsilon p^{(2l+1)(m-s)+l}=(1+\varepsilon p^{m-s})Q_{\eta,p}(p^{m-s}),
$$
where $Q_{\eta,p}(p^{m-s})=\sum_{i=0}^l p^{2i(m-s)+i}-\varepsilon\sum_{i=1}^{l} p^{(2i-1)(m-s)+i-1}.$

If $k=0$, $k'=2l+1$ odd ($l\geq 1$),
$$J(s,\eta,\Phi_p)=1+\sum_{r=1}^l p^{2r(m-s)}(p^r-p^{r-1})-p^{(2l+2)(m-s)+l}=(1- p^{2(m-s)})Q_{\eta,p}(p^{m-s}),
$$
where $Q_{\eta,p}(p^{m-s})=1+\sum_{i=1}^{l} p^{2i(m-s)+i}.$

Recall that $(\tfrac n2)=0$ if $2|n$ or $n\equiv 3$ (mod 4). For $n\equiv 1$ (mod 4),
$(\tfrac n2)=\begin{cases} 1, &\text{if $n\equiv 1$ (mod 8)}\\ -1, &\text{if $n\equiv 5$ (mod 8)}\end{cases}$. 

\begin{theorem}\label{Siegel-2} Let $n$ be odd, ${\rm dim}V'=n+2=2m+1$, and $p=2$. Let $2^k\|\eta$ and $2^{2k+k'}\|q(\eta)$. Let $l=\lfloor \frac {2k+k'}2\rfloor$. 
Let $q(\eta)_1=q(\eta)2^{-2k-k'}$. 
Let $\chi_\eta(2) = \begin{cases} (\frac {q(\eta)_1}2), &\text{if $2k+k'$ even}\\ 0, &\text{if $2k+k'$ odd}\end{cases}$.
Then
$$J(s,\eta,\Phi_2)=\frac {1-2^{2(m-s)}}{1-\chi_{\eta}(2) 2^{m-s}} Q_{\eta,p}(2^{m-s}),
$$ 
where $Q_{\eta,p}(X)\in \Bbb Z[X]$ is a polynomial of degree $\alpha$, and $\alpha=\begin{cases} 2l, &\text{if $\chi_\eta(2)=0$}\\ 2l+2, &\text{if $\chi_\eta(2)=\pm 1$}\end{cases}$. And
$Q_{\eta,p}(X)$ satisfies the functional equation 
$$
2^{\frac {\alpha}2} X^{\alpha}Q_{\eta,p}(2^{-1}X^{-1})=Q_{\eta,p}(X).
$$
\end{theorem}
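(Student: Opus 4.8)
The plan is to compute $J(s,\eta,\Phi_2)$ directly from its definition in Section \ref{p-adic}, namely
$$J(s,\eta,\Phi_2)=1+\sum_{r\ge 1}2^{-rs}B_{r,\eta},\qquad B_{r,\eta}=\sum_{u\in V'(\Z/2^r\Z)\atop q(u)\equiv 0\, \text{(mod $2^r$)}}\psi\!\left(\frac{(\eta,u)}{2^r}\right),$$
by evaluating the $2$-adic exponential sums $B_{r,\eta}$ explicitly and summing the resulting series in $r$. This is parallel to the proof of Theorem \ref{Siegel-odd}, the new feature being that $2$ is ramified for the quadratic datum: since $A=\diag(1,A')$ with $A'$ even unimodular, the lattice $V'$ splits over $\Z_2$ as $\langle-1\rangle\perp W$ with $W$ even unimodular of rank $n+1$, so $V'$ is of odd type at $2$ and the ensuing Gauss sums must be controlled modulo $8$ rather than modulo $4$.

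First I would reduce to $\eta$ primitive. Writing $\eta=2^k\eta_0$ with $\eta_0$ primitive, so that $(\eta,u)=2^k(\eta_0,u)$, and regrouping the $u$ in \eqref{main} according to their residue modulo $2^{r-k}$ (for $r\ge k$; the terms with $r<k$ are elementary), one expresses $B_{r,\eta}$ through the analogous sums for $\eta_0$ at lower levels; the outcome depends only on $k$, $k'$, and $q(\eta)_1\bmod 8$. Next, for primitive $\eta$ I would split $u$ along $\Z_2\eta$ and its orthogonal complement in $V'$ — at $p=2$ this is legitimate only after passing to a Jordan-adapted basis, and it costs an index-$2$ correction when $\eta$ meets the $\langle-1\rangle$-block — and complete the square in the $\eta$-component of $u$. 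The sum over the complement is then the Gauss sum of a unimodular $\Z_2$-form, a fixed $8$th root of unity times a power of $2$; the remaining one-variable sum $\sum_{t\, \mathrm{mod}\, 2^r}\psi\big(q(\eta)t^2/2^r+\ast/2^r\big)$ is a classical quadratic Gauss sum whose value brings in $\big(\tfrac{q(\eta)_1}{2}\big)=\chi_\eta(2)$ together with a root of unity depending on $q(\eta)_1\bmod 8$. Assembling these pieces makes $B_{r,\eta}$ completely explicit as a function of $r$; summing the (essentially geometric) series $\sum_{r\ge 1}2^{-rs}B_{r,\eta}$ produces the closed form $\tfrac{1-2^{2(m-s)}}{1-\chi_\eta(2)2^{m-s}}\,Q_{\eta,p}(2^{m-s})$ claimed in the statement. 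From that expression one reads off that $Q_{\eta,p}\in\Z[X]$, determines its degree — $2l$ when $\chi_\eta(2)=0$ and $2l+2$ when $\chi_\eta(2)=\pm1$, the extra two coming from the interaction of the numerator $1-2^{2(m-s)}$ with the top-level Gauss-sum term — and checks the functional equation $2^{\alpha/2}X^{\alpha}Q_{\eta,p}(2^{-1}X^{-1})=Q_{\eta,p}(X)$ coefficient by coefficient. The low-rank cases recorded after the statement ($k=0$, $k'\in\{0,1,2\}$) serve as base cases and as a consistency check.

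The main obstacle is the $2$-adic Gauss-sum bookkeeping, which breaks into three intertwined difficulties. First, when $q(\eta)$ is even one cannot divide by $2q(\eta)$ to complete the square, which forces a case analysis on $k'$; in the sub-case $2k+k'$ even this residual computation is exactly what produces the extra degree-$2$ bump, hence $\alpha=2l+2$ rather than $2l$ — a phenomenon absent at odd primes. Second, because the $\langle-1\rangle$-summand is of odd type, the quadratic Gauss sums must be evaluated modulo $8$ (not merely modulo $4$ as for $p>2$), and it is precisely this refinement that makes $\chi_\eta(2)$, the Kronecker symbol separating $\pm1$ from $\pm3\bmod 8$, appear; keeping the $8$th-root-of-unity normalizations consistent through the recursion in $r$ is delicate. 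Third, the twisted shape $2^{\alpha/2}X^{\alpha}Q_{\eta,p}(2^{-1}X^{-1})=Q_{\eta,p}(X)$ of the functional equation — as opposed to the plain palindromy of the even-$n$ case — has to be matched against those same root-of-unity constants, and verifying that they cancel correctly is the technical heart of the argument. Modulo these $2$-adic points the proof runs parallel to that of Theorem \ref{Siegel-odd}.
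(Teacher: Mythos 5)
Your high-level plan (evaluate the exponential sums $B_{r,\eta}$ by explicit $2$-adic Gauss sums refined modulo $8$, sum the series in $r$, then read off the polynomial, its degree and the twisted functional equation) is the same strategy the paper follows, but the device you build it on has a genuine flaw. The orthogonal splitting $V'(\Z_2)=\Z_2\eta\perp\eta^{\perp}$ you invoke for primitive $\eta$ does not exist in general at $p=2$: already for $\eta=e+f$ inside one hyperbolic plane of the split model ($q(e)=q(f)=0$, $(e,f)=1$) one has $q(\eta)=1$ but $(\eta,\eta)=2q(\eta)\in2\Z_2$, and $\Z_2\eta\oplus\Z_2(e-f)$ has index $2$ in that plane; moreover no Jordan-adapted change of basis repairs this, since the orthogonal complement of such an $\eta$ in $\langle 1\rangle\perp H^m$ is an odd lattice while the complement of a vector spanning a rank-one summand would be even. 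So the failure is not confined to "$\eta$ meeting the $\langle-1\rangle$-block", and an "index-$2$ correction" is not a well-defined fix. The paper never splits along $\eta$: it fixes the split model $q=u_1v_1+\cdots+u_mv_m+u_0^2$ (legitimate because $G$ splits at $2$), detects $q(u)\equiv0\pmod{2^r}$ by averaging over an auxiliary $x\bmod 2^r$, and factors the $u$-sum coordinatewise, so the divisibility of the coordinates of $\eta$ enters only through Lemma \ref{Gauss-sum} and the $p=2$ Gauss-sum Lemma \ref{p=2}; the resulting case division (whether $2^{k+1}$ divides all hyperbolic coordinates with $2^k\|a_0$, or $2^k\|$ the hyperbolic part) records exactly the information your single completion of the square along $\eta$ would erase. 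Your preliminary reduction to primitive $\eta$ is likewise not innocuous, since the constraint $q(u)\equiv0\pmod{2^r}$ does not factor through $u\bmod 2^{r-k}$; the paper treats imprimitive $\eta$ uniformly inside the Gauss-sum lemmas. (Also, your claimed local form $\langle-1\rangle\perp(\text{even unimodular})$ is not the paper's $\langle1\rangle\perp H^m$; at $p=2$ that sign changes precisely the eighth-root-of-unity constants you need to track.)

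The second gap is that the steps which actually yield the statement are asserted rather than argued. You never state the detection step that produces quadratic phases at all, and after the Gauss sums one must still evaluate sums of the shape $\sum_{x\bmod 2^T}\psi(-xm/2^T)\,(\tfrac{2}{x})^{T}(1+i^{x})$, which the paper handles via Dirichlet Gauss sums modulo $4$ and $8$; this, not a one-variable "classical quadratic Gauss sum", is where $\chi_\eta(2)$ and the mod-$8$ case analysis enter. The series in $r$ is then not geometric: one has to prove that the coefficients $C^{\ve}(k,k',b)$ of $(1-\chi_\eta(2)2^{m-s})^{-1}J(s,\eta,\Phi_2)$ (resp.\ of $(1-2^{2(m-s)})^{-1}J$ when $\chi_\eta(2)=0$) vanish for $b$ beyond $2l+2$ (resp.\ $2l$), and establish identities such as $C(k,k',2l+2-i)=2^{l+1-i}\,C(k,k',i)$; these computations constitute the degree claim and the twisted functional equation $2^{\alpha/2}X^{\alpha}Q_{\eta,p}(2^{-1}X^{-1})=Q_{\eta,p}(X)$, and your proposal defers them entirely ("the constants cancel correctly"). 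As written, the argument would break at the lattice-splitting step and leaves the decisive bookkeeping unperformed.
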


Since the proofs are very long, we will postpone the proofs until Section \ref{Siegel}.

\subsubsection{Archimedean computation}\label{infty}

For the archimedean place, recall some definitions. The $K$-Bessel function $K_v(y)$ is defined as
$$K_v(y)=\frac 12\int_0^\infty e^{-y(t+t^{-1})/2} t^v \frac {dt}t.
$$
It satisfies $K_{-v}(y)=K_v(y)$.
The maximal compact subgroup $K$ of $\SO(V)(\Bbb R)$ is $S({\rm O}(3)\times {\rm O}(n+1))$. Denote by $\Bbb V_l=Sym^{2l}(\Bbb C^2)$, the $(2l+1)$-dimensional representation of $K$ that factors through ${\rm O}(3)$, and let $\{X^{2l}, X^{2l-1}Y,...,Y^{2l}\}$ be a basis of $\Bbb V_l$.

Recall the notations in \cite{Po}: $V'=V_2\oplus V_n$ with the quadratic form $q(x,y)=q_2(x)-q_n(y)$, where $(V_2,q_2)$ and $(V_n,q_n)$ are 2-dimensional ($n$-dimensional, resp.) quadratic space with positive definite quadratic forms. Let $\{v_1,v_2\}$ be an orthonormal basis of $V_2(\Bbb R)$. Note that this quadratic form is different from the one in Section \ref{pre}. It accounts for the appearance of $\sqrt{2}$.

Let $f_l(g,s)$ be the $\Bbb V_l$-valued section. For $x\in V'(\Bbb R)$, let $x=(x_2,x_n)$, where $x_2\in V_2, x_n\in V_n$. 
 Then
$$f_l(w n(x),s)=\frac {p_{V_3}(1,x,-q(x))^l}{||(1,x,-q(x))||^{s+l}},
$$
where $p_{V_3}: V=V_3\oplus V_{n+1}\longrightarrow V_3$ is the orthogonal projection. Then
$$p_{V_3}(1,x,-q(x))=-\frac 1{2\sqrt{2}} \left( (\sqrt{2} x_2, iv_1+v_2)X^2+(||x_2||^2-||x_n||^2-2)XY+(\sqrt{2} x_2, iv_1-v_2)Y^2\right)
$$
and 
$$||(1,x,-q(x))||^2=\tau(\sqrt{2} x_n, \sqrt{2} x_2)^2=||x_2||^2+\left( 1+\frac {||x_n||^2-||x_2||^2}4 \right)^2.
$$

Consider, for $\eta\in V'(\Bbb R)$ with $q(\eta)>0$, 
$$J(s,f_l)=\int_{V'(\Bbb R)} e^{-2\pi i(\eta,x)} f_l(w n(x),s)\, dx.
$$

Let $a=-\sqrt{2} (x_2,iv_1+v_2)$ and $b=\frac {||x_2||^2-||x_n||^2-2}2$. Then we have

\begin{lemma}\cite{Po} If $b>0$,
\begin{eqnarray*}
&&  \left(aX^2+2bXY-\bar aY^2\right)^l=\sum_{v=0}^l \begin{pmatrix} l\\v\end{pmatrix} 2^{l-v} \delta_{v0}^{\frac 12} (XY)^{l-v} (|a|^2+b^2)^{\frac {l-v}2} \\
&& \phantom{xxxxxxxxxxxxx}\cdot {}_2 F_1\left(\frac {v-l}2,\frac {v+l+1}2;v+1,\frac {|a|^2}{|a|^2+b^2}\right)
\left(a^v X^{2v}+(-\bar a)^vY^{2v}\right),
\end{eqnarray*}
where $\delta_{v0}^{\frac 12}=\begin{cases} \frac 12, &\text{if $v=0$}\\ 1, &\text{otherwise}\end{cases}$.
\end{lemma}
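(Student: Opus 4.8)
The statement to prove is the Lemma attributed to Pollack giving the binomial-type expansion of $(aX^2+2bXY-\bar aY^2)^l$.

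The plan is to expand the $l$-th power of the quadratic form $aX^2+2bXY-\bar aY^2$ directly and then reorganize the resulting sum according to the power of $XY$ that can be factored out. First I would observe that $aX^2+2bXY-\bar aY^2 = b\cdot 2XY + (aX^2 - \bar a Y^2)$ is not quite a clean split, so instead I would complete the square in the variables $X^2,Y^2$: write the quadratic form as a product of two linear forms in $X^2$-ish quantities, or more concretely diagonalize $aX^2+2bXY-\bar aY^2$. Its ``discriminant'' is $4b^2 + 4a\bar a = 4(|a|^2+b^2)$, so the form factors over the reals as a scaled product whose geometric mean of the ``roots'' involves $\sqrt{|a|^2+b^2}$; this is precisely where the factor $(|a|^2+b^2)^{(l-v)/2}$ in the statement originates. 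The appearance of the Gauss hypergeometric $_2F_1\big(\tfrac{v-l}2,\tfrac{v+l+1}2;v+1;\tfrac{|a|^2}{|a|^2+b^2}\big)$ strongly suggests using the classical representation of Gegenbauer (ultraspherical) or Jacobi polynomials as a $_2F_1$: indeed $(1-2xt+t^2)^{-\lambda}$ has a Gegenbauer generating function, and the coefficients of $XY$-powers in $(aX^2+2bXY-\bar aY^2)^l$ are, after normalization, values of such polynomials.

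Concretely, the key steps I would carry out are: (1) Set $z = X^2$, $w = -Y^2$ so the form becomes $az + 2b\,i\sqrt{zw}\cdot(\text{sign})\ldots$ — actually cleaner: keep $X,Y$ and write $(aX^2+2bXY-\bar a Y^2)^l = \sum_{j+k+m=l}\binom{l}{j,k,m}a^j(2b)^k(-\bar a)^m X^{2j+k}Y^{2m+k}$. (2) Regroup by setting $v = |j - m|$ and noting $X^{2j+k}Y^{2m+k} = (XY)^{2\min(j,m)+k}\cdot(\text{pure }X^{2v}\text{ or }Y^{2v})$, so the exponent of $XY$ is $l-v$ exactly when $2\min(j,m)+k = l-v$, i.e. $\min(j,m) = (l-v-k)/2$. (3) For fixed $v$, sum over the remaining free parameter (say $k$, with $j,m$ then determined up to which of $j,m$ is larger) and identify the resulting finite sum as the $_2F_1$ in the statement times $a^v$ (for the $j>m$ branch) or $(-\bar a)^v$ (for the $m>j$ branch) and times the common factor $2^{l-v}(|a|^2+b^2)^{(l-v)/2}$; the $\delta_{v0}^{1/2}$ accounts for the diagonal $j=m$ term being counted once rather than twice. (4) Verify the binomial coefficient $\binom{l}{v}$ and the power of $2$ come out correctly by comparing a single explicit term, e.g. the top term $v = l$ (which forces $k=0$, giving $\binom{l}{l,0,0}\cdots$, trivially matching) and the $v=0$ term.

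The main obstacle will be step (3): correctly identifying the inner finite sum with the specific Gauss hypergeometric $_2F_1\big(\tfrac{v-l}2,\tfrac{v+l+1}2;v+1;\tfrac{|a|^2}{|a|^2+b^2}\big)$, including getting the half-integer parameters, the argument $\tfrac{|a|^2}{|a|^2+b^2}$ (rather than $\tfrac{b^2}{|a|^2+b^2}$ or $\tfrac{|a|^2}{b^2}$), and the normalizing powers of $2$ and of $(|a|^2+b^2)$ exactly right. This is a bookkeeping-heavy manipulation of ratios of factorials, and the cleanest route is probably to recognize $(aX^2+2bXY-\bar aY^2)^l$ via the substitution $t = X/Y$ (or $Y/X$) as $Y^{2l}(at^2 + 2bt - \bar a)^l = Y^{2l}a^l(t-r_1)^l(t-r_2)^l$ with $r_1 r_2 = -\bar a/a$ and $r_1+r_2 = -2b/a$, then use the known Gegenbauer/Jacobi generating-function identity $(1-2ux+u^2)^{l}$-type expansion after the affine change of variable that moves the two roots to a symmetric position about the origin; the parameter $\tfrac{|a|^2}{|a|^2+b^2}$ is exactly the square of the ratio comparing the ``separation of the roots'' to their ``geometric size,'' which is where $|a|^2+b^2$ enters. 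The hypothesis $b>0$ is used only to fix a sign/branch in extracting $\sqrt{|a|^2+b^2}$ consistently. Once the identity is checked on the extreme terms $v=l$ and $v=0$ and the general inner sum is matched to the $_2F_1$, the Lemma follows, and I expect the whole argument to be a careful but routine exercise in classical special-function identities.
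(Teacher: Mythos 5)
This lemma is quoted from Pollack's paper \cite{Po}; the present paper gives no proof of it at all, so there is nothing in the text to compare against, and your direct multinomial argument is a legitimately different (and self-contained) route. Your plan does work, and the step you flag as the main obstacle closes more easily than you suggest: you do not need Gegenbauer generating functions, root-locating substitutions, or any completing-the-square/diagonalization detour. Expand $(aX^2+2bXY-\bar aY^2)^l=\sum_{j+k+m=l}\tfrac{l!}{j!k!m!}a^j(2b)^k(-\bar a)^mX^{2j+k}Y^{2m+k}$; the coefficient of $X^{l+v}Y^{l-v}$ ($v\ge 1$) comes from $j-m=v$, $k=l-v-2m$, and equals $a^v\sum_{m\ge 0}\tfrac{l!}{(m+v)!\,(l-v-2m)!\,m!}(2b)^{l-v-2m}(-|a|^2)^m$. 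On the other side, first apply the Pfaff transformation that the paper itself displays immediately after the lemma, $(|a|^2+b^2)^{\frac{l-v}{2}}\,{}_2F_1\bigl(\tfrac{v-l}{2},\tfrac{v+l+1}{2};v+1;\tfrac{|a|^2}{|a|^2+b^2}\bigr)=b^{\,l-v}\,{}_2F_1\bigl(\tfrac{v-l}{2},\tfrac{v-l+1}{2};v+1;-\tfrac{|a|^2}{b^2}\bigr)$ (this is precisely where $b>0$ is used, since the left-hand side is even in $b$ while the true coefficient is not, confirming your remark about the branch), and then expand the right-hand ${}_2F_1$ using $(x)_m(x+\tfrac12)_m=(2x)_{2m}/4^m$, so that $(\tfrac{v-l}{2})_m(\tfrac{v-l+1}{2})_m=(v-l)_{2m}/4^m=\tfrac{(l-v)!}{4^m(l-v-2m)!}$ for $2m\le l-v$ and $0$ otherwise. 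Multiplying by $\binom{l}{v}(2b)^{l-v}a^v$ reproduces the multinomial sum term by term; the coefficient of $X^{l-v}Y^{l+v}$ gives $(-\bar a)^v$ by the symmetric computation with $m-j=v$, and at $v=0$ the factor $\delta_{v0}^{1/2}$ exactly compensates for $a^0X^0+(-\bar a)^0Y^0=2$, as you observed. So your approach is sound and elementary; what it buys is independence from \cite{Po}, at the cost of the Pochhammer bookkeeping, whereas the paper simply imports Pollack's statement.
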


Here we use the identity
$${}_2F_1(a',b';c',z)=(1-z)^{-a'} {}_2F_1(a',c'-b';c',\frac z{z-1}).
$$
So
$$
(|a|^2+b^2)^{\frac {l-v}2} {}_2 F_1\left(\frac {v-l}2,\frac {v+l+1}2;v+1,\frac {|a|^2}{|a|^2+b^2}\right)
=b^{l-v} {}_2 F_1\left(\frac {v-l}2,\frac {v-l}2+\frac 12;v+1,-\frac {|a|^2}{b^2}\right).
$$

So 
$$J(s,f_l)=(2\sqrt{2})^{-l} \int_{V'(\Bbb R)} e^{-2\pi i(\eta,x)} \tau(\sqrt{2}x_n,\sqrt{2}x_2)^{-s-l} \left(aX^2+2bXY-\bar aY^2\right)^l
\, dx.
$$

Note that $|a|^2=2||x_2||^2$ and $|a|^2+b^2=\tau(\sqrt{2}x_n,\sqrt{2}x_2)^2$.
Let

\begin{eqnarray*}
&& \Phi_v(s)=\int_{V'(\Bbb R)} e^{-2\pi i (\eta,x)} \tau(\sqrt{2}x_n,\sqrt{2}x_2)^{-s-v} a^v
{}_2 F_1\left(\frac {v-l}2,\frac {l+v+1}2;v+1,\frac {|a|^2}{|a|^2+b^2}\right)\, dx \\
&& \phantom{xxxx} =\int_{V'(\Bbb R)} e^{-2\pi i (\eta,x)} (|a|^2+b^2)^{-\frac {s+l}2} a^v b^{l-v}
{}_2 F_1\left(\frac {v-l}2,\frac {v-l}2+\frac 12;v+1,-\frac {|a|^2}{b^2}\right)\, dx.
\end{eqnarray*}

Let 
$$\widetilde\Phi_v(s)=\int_{V'(\Bbb R)} e^{-2\pi i (\eta,x)} \tau(\sqrt{2}x_n,\sqrt{2}x_2)^{-s-v} (-\bar a)^v
{}_2 F_1\left(\frac {v-l}2,\frac {l+v+1}2;v+1,\frac {|a|^2}{|a|^2+b^2}\right)\, dx \\
$$
Then $\Phi_0(s)=\widetilde\Phi_0(s)$, and

\begin{eqnarray*}
&& J(s,f_l)=(2\sqrt{2})^{-l}\sum_{v=1}^l \begin{pmatrix} l\\v\end{pmatrix} 2^{l-v} 
\Phi_v(s) X^{l+v}Y^{l-v} +
(2\sqrt{2})^{-l} \sum_{v=1}^l \begin{pmatrix} l\\v\end{pmatrix} 2^{l-v} 
\tilde\Phi_v(s) X^{l-v}Y^{l+v} \\
&&\phantom{xxxxxxx} +(2\sqrt{2})^{-l} 2^l \Phi_0(s)X^lY^l.
\end{eqnarray*}

Pollack \cite{Po} proved the following when $s=l+1$,
\begin{theorem} Let $v\geq 0$.
\begin{eqnarray*}
&&  \Phi_v(l+1)= \frac {(2\pi)^{-\frac {n+2}2}2^{-(l-v-1-\frac n2)}\Gamma(v+1)}{\Gamma(l+v+1)\Gamma(l-\frac n2+1)}q(\eta)^{l-\frac n2} \left( \frac {|(2\pi\eta,v_1+iv_2)|}{(2\pi\eta,v_1+iv_2)}\right)^v K_v(\sqrt{2}|(2\pi\eta,v_1+iv_2)|).
\end{eqnarray*}
\end{theorem}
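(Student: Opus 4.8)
The plan is to evaluate $\Phi_v(l+1)$ directly, by computing the defining integral for $\Phi_v(s)$ where $\mathrm{Re}\,s$ is large and continuing the closed form to $s=l+1$. Since the $\Bbb V_l$-valued section $f_l(wn(\,\cdot\,),s)$ and the spherical function $\mathcal W_{2\pi\eta}$ transform the same way under $K\cap M=S({\rm O}(2)\times {\rm O}(n))$, we may first normalize $\eta$: writing $\eta=(\eta_2,\eta_n)\in V_2(\Bbb R)\oplus V_n(\Bbb R)$, arrange $\eta_2$ to point along $v_1$ and $\eta_n$ along a fixed coordinate axis of $V_n$. Next, combining the Lemma that expands $(aX^2+2bXY-\bar aY^2)^l$ into ${}_2F_1$'s with the quoted ${}_2F_1$-identity, the factor $b^{l-v}\,{}_2F_1\!\big(\tfrac{v-l}{2},\tfrac{v-l}{2}+\tfrac12;v+1;-|a|^2/b^2\big)$ becomes a polynomial in $|a|^2$ and $b$ (a Gegenbauer-type polynomial), homogeneous of degree $l-v$ if $|a|$ and $b$ are each given degree one; here the hypothesis $b>0$ of the Lemma is used, the locus $b<0$ being recovered afterwards by continuation. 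Since $|a|$, $b$ and $\tau^2=|a|^2+b^2=\|(1,x,-q(x))\|^2$ depend on $x$ only through $\|x_2\|^2$ and $\|x_n\|^2$, the integrand of $\Phi_v(s)$ then depends on $x$ only through $\|x_2\|$, $\|x_n\|$, the direction of $x_2$, and the single coordinate of $x_n$ carrying the phase.

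Next I would carry out the angular integrations. Integrating $x_n$ over the $n-1$ directions orthogonal to the phase yields a surface-area constant and a one-dimensional radial integral (when $\eta_n\ne0$, integrating the remaining phase coordinate against the radial variable introduces a Bessel function $J_{\frac n2-1}$ of $2\pi|\eta_n|\,\|x_n\|$). For $x_2$ I pass to polar coordinates $(s_2,\theta)$; the essential point is that $a^v=(-\sqrt2\,(x_2,iv_1+v_2))^v$ contributes an angular factor $e^{-iv\theta}$ up to a constant of modulus one, so the $\theta$-integral against the phase $e^{-2\pi i\lambda s_2\cos\theta}$ (with $\lambda$ proportional to $|(2\pi\eta,v_1+iv_2)|$) is exactly $2\pi(-i)^v J_v(2\pi\lambda s_2)$. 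This single computation produces the Bessel index $v$ and the phase factor $\big(|(2\pi\eta,v_1+iv_2)|/(2\pi\eta,v_1+iv_2)\big)^v$ of the final formula.

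What is left is a low-dimensional radial integral of $J_v(2\pi\lambda\|x_2\|)$ against a polynomial in $\|x_2\|^2,\|x_n\|^2$ times $\tau^{-(s+l)}$. The cleanest route is to linearize the denominator by the Gamma-function identity $\tau^{-(s+l)}=\Gamma(\tfrac{s+l}{2})^{-1}\int_0^\infty u^{\frac{s+l}{2}-1}e^{-u\tau^2}\,du$: since $\tau^2=\|x_2\|^2+\big(1+\tfrac{\|x_n\|^2-\|x_2\|^2}{4}\big)^2$ is a sum of squares, in suitable variables the $x$-integral becomes Gaussian (with polynomial weights from $p_{V_3}(1,x,-q(x))^l$ and the angular factors just obtained) and is done by standard Gaussian-moment formulas and Weber-type integrals against $J_v$, the Fourier phase contributing a factor $e^{-\pi^2 c/u}$ with $c$ proportional to $|(2\pi\eta,v_1+iv_2)|^2$. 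The remaining integrand in $u$ has the form $u^{A}e^{-Bu-C/u}$, whose integral over $(0,\infty)$ is a $K$-Bessel function; specializing $s=l+1$ and reading off the parameters yields $q(\eta)^{l-\frac n2}\big(|(2\pi\eta,v_1+iv_2)|/(2\pi\eta,v_1+iv_2)\big)^v K_v\big(\sqrt2\,|(2\pi\eta,v_1+iv_2)|\big)$ together with the stated constant. Note the prefactor involves $q(\eta)=q_2(\eta_2)-q_n(\eta_n)$ while the $K$-Bessel factor sees only $\eta_2$, so the computation does not reduce to $\eta_n=0$.

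The main obstacle is this last stage: organizing the Gaussian $x$-integral and the subsequent $u$-integral so that the $K$-Bessel identity applies with the correct index, and then tracking every numerical constant faithfully — the normalization of the quadratic form (the source of the $\sqrt2$'s, cf. Remark \ref{diffePo}), the surface areas, and the chain of Gamma and power-of-two factors coming out of the Lemma, the ${}_2F_1\to$ polynomial step and the Gaussian integrals — so that the exponents $l-\tfrac n2$ and $-(l-v-1-\tfrac n2)$ and the argument $\sqrt2\,|(2\pi\eta,v_1+iv_2)|$ all come out exactly. Secondary points needing care are justifying the interchanges (the finite sum from the Lemma with the integration, and the continuation $s\to l+1$) and disposing of the case $b<0$.
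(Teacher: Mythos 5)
The paper itself contains no proof of this statement: it is quoted directly from Pollack \cite{Po} ("Pollack proved the following when $s=l+1$"), so there is no in-paper argument to compare yours with; your attempt has to stand on its own as a proof of the identity. The preliminary reductions you describe are sound: since $\tfrac{v-l}2$ or $\tfrac{v-l}2+\tfrac12$ is a non-positive integer, the ${}_2F_1$ terminates and $b^{l-v}\,{}_2F_1\bigl(\tfrac{v-l}2,\tfrac{v-l}2+\tfrac12;v+1;-|a|^2/b^2\bigr)$ is a polynomial homogeneous of degree $l-v$ in $(|a|,b)$, which also disposes of the locus $b<0$; the angular integral over the $x_2$-plane does give $2\pi(-i)^vJ_v(2\pi|\eta_2|\,\|x_2\|)$ and, with $K\cap M$-equivariance, the unitary factor $\bigl(|(2\pi\eta,v_1+iv_2)|/(2\pi\eta,v_1+iv_2)\bigr)^v$; and the $x_n$-sphere produces a $J_{\frac n2-1}$.

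The genuine gap is the last stage, which is where the entire content of the theorem lives. After the Gamma-linearization $\tau^{-(s+l)}=\Gamma(\tfrac{s+l}2)^{-1}\int_0^\infty u^{\frac{s+l}2-1}e^{-u\tau^2}\,du$, the exponent $u\tau^2=u\|x_2\|^2+u\bigl(1+\tfrac{\|x_n\|^2-\|x_2\|^2}{4}\bigr)^2$ is \emph{quartic} in the radial variables, so the $x$-integral is not Gaussian in any linear change of variables; if you substitute $w=1+\tfrac{\|x_n\|^2-\|x_2\|^2}{4}$ to make it Gaussian in $(\|x_2\|,w)$, then $\|x_n\|=\sqrt{4(w-1)+\|x_2\|^2}$ reappears in the measure $\|x_n\|^{n-1}d\|x_n\|$ and inside $J_{\frac n2-1}(2\pi|\eta_n|\,\|x_n\|)$, so "standard Gaussian moments and Weber-type integrals" do not apply as claimed and a further device is needed. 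It is exactly at this point that the nontrivial structure of the answer must be exhibited: that $\eta_n$ enters only through the power $q(\eta)^{l-\frac n2}$ while the Bessel factor $K_v\bigl(\sqrt2\,|(2\pi\eta,v_1+iv_2)|\bigr)$ sees only $\eta_2$, that the clean closed form holds at the special point $s=l+1$, and that the constant $\frac{(2\pi)^{-\frac{n+2}2}2^{-(l-v-1-\frac n2)}\Gamma(v+1)}{\Gamma(l+v+1)\Gamma(l-\frac n2+1)}$ comes out exactly. Your write-up explicitly defers this ("the main obstacle is this last stage"), so the identity asserted by the theorem — which is precisely these constants and this Bessel structure — is not established by the proposal; to complete it you would need either to carry out Pollack's manipulation of the radial integral or to supply an alternative evaluation (e.g.\ a second integral representation for the quartic square term) and verify every factor, including the $\sqrt2$ normalization issue of Remark \ref{diffePo}.
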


We see that since $K_{-v}(x)=K_v(x)$,
$$\widetilde\Phi_v(l+1) = \frac {(2\pi)^{-\frac {n+2}2}2^{-(l-v-1-\frac n2)}\Gamma(v+1)}{\Gamma(l+v+1)\Gamma(l-\frac n2+1)}q(\eta)^{l-\frac n2} \left( \frac {|(2\pi\eta,v_1+iv_2)|}{\overline{(2\pi\eta,v_1+iv_2)}}\right)^v K_v(\sqrt{2}|(2\pi\eta,v_1+iv_2)|).
$$
Hence 
\begin{theorem}\cite{Po}
$$J(l+1,f_l)=\pi^{2l+1-\frac n2} q(\eta)^{l-\frac n2} \mathcal W_{2\pi\eta}(1),
$$
where 
for $\eta\in V'(\Bbb R)$, and $t\in GL_1(\Bbb R), m\in SO(V')(\Bbb R)$, 
$$u_\eta(t,m)=\sqrt{2} ti(\eta,m(iv_1-v_2)),
$$
$$\mathcal W_{2\pi \eta}(t,m)=t^l |t| \sum_{v=-l}^l \left(\frac {|u_\eta(t,m)|}{u_\eta(t,m)}\right)^v K_v(|u_\eta(t,m)|) X^{l-v}Y^{l+v}.
$$
\end{theorem}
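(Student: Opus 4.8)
The plan is to obtain the identity as a direct assembly of the two preceding evaluations (Pollack's formula for $\Phi_v(l+1)$ and the derived formula for $\widetilde\Phi_v(l+1)$) together with the block decomposition of $J(s,f_l)$ into its $\Phi_v$, $\widetilde\Phi_v$ and $\Phi_0$ components. Concretely, I would set $s=l+1$ in that decomposition and substitute the closed forms for $\Phi_v(l+1)$ and $\widetilde\Phi_v(l+1)$, so that $J(l+1,f_l)$ becomes an explicit finite sum of monomials $X^{l\pm v}Y^{l\mp v}$ with $K$-Bessel coefficients. The goal is then purely to recognize this sum as $\pi^{2l+1-\frac n2}q(\eta)^{l-\frac n2}$ times the defining expression for $\mathcal W_{2\pi\eta}(1)$.

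The first key step is to pin down the complex parameter. Writing $w:=(2\pi\eta,v_1+iv_2)$ and using $iv_1-v_2=i(v_1+iv_2)$, I would compute $u_{2\pi\eta}(1,\mathrm{id})=\sqrt2\,i\,(2\pi\eta,iv_1-v_2)=-\sqrt2\,w$. This immediately gives the matching of Bessel arguments, namely $|u_{2\pi\eta}(1,\mathrm{id})|=\sqrt2\,|w|$, which is exactly the argument occurring in both $\Phi_v(l+1)$ and $\widetilde\Phi_v(l+1)$; and it gives $\frac{|u|}{u}=-\frac{|w|}{w}=-\frac{\bar w}{|w|}$, together with the conjugate relations $\frac{|w|}{w}=\frac{\bar w}{|w|}$ and $\frac{|w|}{\bar w}=\frac{w}{|w|}$ that convert the phase factors appearing in $\Phi_v$ and $\widetilde\Phi_v$ into powers of $\frac{|u|}{u}$.

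Next I would match the decomposition term by term against the target sum $\sum_{v=-l}^{l}\bigl(\frac{|u|}{u}\bigr)^v K_v(|u|)\,X^{l-v}Y^{l+v}$. The $\widetilde\Phi_v$ terms, carrying the monomials $X^{l-v}Y^{l+v}$ with $v\geq1$, supply the strictly positive indices; the $\Phi_v$ terms, carrying $X^{l+v}Y^{l-v}$, supply the strictly negative indices after the reindexing $v\mapsto-v$ and the use of $K_{-v}=K_v$; and the central $\Phi_0$ term (with $\Phi_0=\widetilde\Phi_0$) supplies the $v=0$ monomial $X^lY^l$. For each such term I would insert the explicit constants $(2\sqrt2)^{-l}\binom{l}{v}2^{l-v}$ together with the scalar prefactors of $\Phi_v(l+1)$ and $\widetilde\Phi_v(l+1)$, and then collect the powers of $2$, the power of $\pi$, the factor $q(\eta)^{l-\frac n2}$, and the combinatorial ratio $\binom{l}{v}\Gamma(v+1)/\Gamma(l+v+1)$.

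The main obstacle is exactly this last bookkeeping. One must check that, after combining $(2\sqrt2)^{-l}$, the $2^{l-v}$, and the $2^{-(l-v-1-\frac n2)}$ coming from the $\Phi$-evaluations, the $v$-dependence in the powers of $2$ cancels, and that the remaining combinatorial factor reorganizes so that the coefficient of every monomial reduces to the single $v$-independent constant $\pi^{2l+1-\frac n2}q(\eta)^{l-\frac n2}$ multiplying $\bigl(\frac{|u|}{u}\bigr)^v K_v(|u|)$ — in particular that the $\Gamma$/binomial ratio is absorbed correctly into the chosen basis $\{X^{2l},\dots,Y^{2l}\}$ of $\mathbb V_l$, and that the sign and phase coming from $u=-\sqrt2\,w$ line up with $\bigl(\frac{|u|}{u}\bigr)^v$ for both signs of $v$. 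Once this uniform simplification is verified, summing the matched monomials yields $J(l+1,f_l)=\pi^{2l+1-\frac n2}q(\eta)^{l-\frac n2}\mathcal W_{2\pi\eta}(1)$, which completes the proof.
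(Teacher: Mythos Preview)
Your proposal is correct and follows essentially the same route as the paper. The paper does not give an independent proof of this theorem; it attributes the result to \cite{Po} and presents precisely the ingredients you describe---the block decomposition of $J(s,f_l)$ into the $\Phi_v$, $\widetilde\Phi_v$, $\Phi_0$ pieces and the explicit evaluations of $\Phi_v(l+1)$ and $\widetilde\Phi_v(l+1)$---leaving the final assembly (your bookkeeping of powers of $2$, the $\Gamma$/binomial ratio, and the phase matching via $u=-\sqrt2\,w$) implicit.
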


We can write

\begin{eqnarray*}
&& \mathcal W_{2\pi \eta}(t,m)=t^l |t| \sum_{v=1}^l \left(\frac {|u_\eta(t,m)|}{u_\eta(t,m)}\right)^v K_v(|u_\eta(t,m)|) X^{l-v}Y^{l+v} \\
&& \phantom{xxxxx} + t^l |t| \sum_{v=1}^l \left(\frac {|u_\eta(t,m)|}{\overline{u_\eta(t,m)}}\right)^v K_v(|u_\eta(t,m)|) X^{l+v}Y^{l-v}
+ t^l|t| K_0(|u_\eta(t,m)|)X^lY^l.
\end{eqnarray*}

After combining $p$-adic and archimedean computations, we have

\begin{theorem}[$n$ even]\label{FCEisenEven}{\rm(}The rank 2 case{\rm)}
For $\eta\in V'(\Q)$ with $q(\eta)>0$, we have
$$a_E(\eta)=C_{l,n} \prod_p Q_{\eta,p}(p^{l-\frac n2})=C_{l,n} q(\eta)^{\frac {l-\frac n2}2}\prod_p \widetilde Q_{\eta,p}(p^{\frac {l-\frac n2}2})
$$
where $C_{l,n}=\ds\frac {\pi^{2l+1-\frac n2}}{\zeta(l+1-\frac n2)}$. 
\end{theorem}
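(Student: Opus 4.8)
The plan is to evaluate the unfolded Fourier-coefficient integral \eqref{FCSS} at the weight-$l$ point $s=l+1$, using the local computations already in place: Theorem~\ref{Siegel-even} at the finite places and the archimedean integral of Section~\ref{infty}. Since $l>n+1$, the point $s=l+1$ lies strictly inside the region $\mathrm{Re}(s)>n+2$ of absolute convergence of $E_l(g,\Phi_\f,s)$, so \eqref{FCSS} applies there directly with no appeal to analytic continuation. The section $f_l=f_{\rm fte}\,f_{l,\infty}$ and the Schwartz function $\Phi_\f=\otimes_p\Phi_p$ factor over all places, so the adelic integral in \eqref{FCSS} splits as $\prod_{p<\infty}J(l+1,\eta,\Phi_p)$ times the archimedean integral, and by Section~\ref{infty} the latter contributes $\pi^{2l+1-\frac n2}q(\eta)^{l-\frac n2}\mathcal W_{2\pi\eta}(g_\infty)$ to the expansion \eqref{EFE1}. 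Comparing with \eqref{EFE1} --- and noting that $J(l+1,\eta,\Phi_p)=0$ when $\eta\notin V'(\Z_p)$, so both sides vanish unless $\eta\in V'(\Z)$ --- this gives
$$a_E(\eta)=\pi^{2l+1-\frac n2}\,q(\eta)^{l-\frac n2}\prod_{p<\infty}J(l+1,\eta,\Phi_p).$$

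Next I would substitute Theorem~\ref{Siegel-even}. For the constant $m=\frac{n+2}2$ occurring there and $s=l+1$ one has $p^{m-1-s}=p^{-(l+1-\frac n2)}$ and $p^{m-s}=p^{\frac n2-l}$, so $J(l+1,\eta,\Phi_p)=(1-p^{-(l+1-\frac n2)})\,Q_{\eta,p}(p^{\frac n2-l})$. For $p\nmid q(\eta)$ the integers of Theorem~\ref{Siegel-even} are $k=k'=0$, so $Q_{\eta,p}\equiv1$ and the product over $p$ is finite; moreover $\prod_{p<\infty}(1-p^{-(l+1-\frac n2)})=\zeta(l+1-\frac n2)^{-1}$, the Euler product converging because $l+1-\frac n2>\frac n2+2>1$. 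Hence
$$a_E(\eta)=C_{l,n}\,q(\eta)^{l-\frac n2}\prod_p Q_{\eta,p}(p^{\frac n2-l}),\qquad C_{l,n}=\frac{\pi^{2l+1-\frac n2}}{\zeta(l+1-\frac n2)}.$$

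It then remains to match this with the two displayed forms, which is a direct computation with the functional equation of $Q_{\eta,p}$. Put $d_p:=2k+k'$, with $k,k'$ the integers associated to $\eta$ and $p$ in Theorem~\ref{Siegel-even}, so that $d_p=v_p(q(\eta))$ and $\prod_p p^{d_p}=q(\eta)$, valid since $q(\eta)>0$. Evaluating the functional equation $X^{d_p}Q_{\eta,p}(X^{-1})=Q_{\eta,p}(X)$ at $X=p^{l-\frac n2}$ gives $Q_{\eta,p}(p^{l-\frac n2})=p^{(l-\frac n2)d_p}\,Q_{\eta,p}(p^{\frac n2-l})$, and taking the product over $p$ turns $q(\eta)^{l-\frac n2}\prod_p Q_{\eta,p}(p^{\frac n2-l})$ into $\prod_p Q_{\eta,p}(p^{l-\frac n2})$, which is the first claimed identity. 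For the second, substituting $X=p^{(l-\frac n2)/2}$ into $\widetilde{Q}_{\eta,p}(X)=X^{d_p}Q_{\eta,p}(X^{-2})$ gives $\widetilde{Q}_{\eta,p}(p^{(l-\frac n2)/2})=p^{(l-\frac n2)d_p/2}\,Q_{\eta,p}(p^{\frac n2-l})$; multiplying over $p$ yields $q(\eta)^{(l-\frac n2)/2}\prod_p\widetilde{Q}_{\eta,p}(p^{(l-\frac n2)/2})=q(\eta)^{l-\frac n2}\prod_p Q_{\eta,p}(p^{\frac n2-l})=\prod_p Q_{\eta,p}(p^{l-\frac n2})$.

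The two substantial inputs --- the explicit Siegel-series formula (Theorem~\ref{Siegel-even}, proved in Section~\ref{Siegel}) and Pollack's evaluation of the archimedean integral --- are imported, so the work above is essentially bookkeeping. The point that needs care is the normalization: confirming that $s=l+1$ is the weight-$l$ point, that the relevant evaluation is at $p^{m-s}=p^{\frac n2-l}$ with $m=\frac{n+2}2$, and keeping careful track of the factors of $\sqrt2$ arising from the discrepancy between our quadratic form $q$ and Pollack's (cf.\ Remark~\ref{diffePo} and \eqref{ourueta}), so that the constant comes out exactly as $\pi^{2l+1-\frac n2}/\zeta(l+1-\frac n2)$ and the functional-equation bookkeeping absorbs precisely the factor $q(\eta)^{l-\frac n2}$.
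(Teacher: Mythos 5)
Your proposal is correct and is essentially the paper's (largely implicit) argument: unfold the Fourier coefficient via \eqref{FCSS} at $s=l+1$, insert Pollack's archimedean evaluation together with Theorem \ref{Siegel-even} at $m=\frac{n+2}2$ so that $\prod_p(1-p^{m-1-s})=\zeta(l+1-\tfrac n2)^{-1}$, and then use the functional equation of $Q_{\eta,p}$ and the definition of $\widetilde Q_{\eta,p}$ to absorb $q(\eta)^{l-\frac n2}$ and obtain both displayed forms. The paper states the theorem with no further detail ("after combining $p$-adic and archimedean computations"), so your bookkeeping matches the intended proof.
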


One corollary of our explicit computation of the Siegel series is that $a_E(\eta)$ have the bounded denominator since $Q_{\eta,p}(X)\in\Bbb Z[X]$.

For $n$ odd, we need some modification. Write $q(\eta)=\frak d_{\eta}\frak f_\eta^2$ such that $\frak d_\eta$ is the absolute discriminant of 
$\Bbb Q\left(\sqrt{\epsilon q(\eta)}\right)/\Bbb Q$ as in the introduction. Let $\chi_\eta$ be the primitive Dirichlet character corresponding to $\Bbb Q\left(\sqrt{\epsilon q(\eta)}\right)/\Bbb Q$. 
We also need the following the functional equation of Dirichlet $L$-function:
$$
L(l-\tfrac {n-1}2,\chi_\eta)=(-1)^{l+n} (2\pi)^{l-\frac {n+1}2} \pi \frac {\frak d_{\eta}^{\frac n2-l}}{(l-\frac {n+1}2)!} L(\tfrac {n+1}2-l,\chi_\eta).
$$

 \begin{theorem}[$n$ odd]\label{FCEisenOdd} For rank 2 case, namely, $q(\eta)>0$, we have 
\begin{eqnarray*}
&& a_E(\eta)=\frac {L(l-\frac {n-1}2,\chi_\eta)}{\zeta(2l-n+1)} \pi^{2l+1-\frac n2} q(\eta)^{l-\frac n2} \prod_{p} Q_{\eta,p}(p^{\frac {n-1}2-l}) \\
 &&\phantom{xxxx} =C_{l,n}' L(\tfrac {n+1}2-l,\chi_\eta) \frak f_\eta^{l-\frac {n}2}\prod_p \widetilde 
 Q_{\eta,p}(p^{l-\frac {n}2}),
\end{eqnarray*} 
where $C_{l,n}'=\frac {(-1)^{\frac {3n+1}2} 2^{l-\frac {n-1}2} \pi^{l+\frac 12}(2l-n+1)!}{B_{2l-n+1}(l-\frac {n+1}2)!}$ and $B_{2l-n+1}$ is the Bernoulli number.
 \end{theorem}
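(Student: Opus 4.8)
The plan is to follow the template of the even case (Theorem \ref{FCEisenEven}): reduce the Fourier coefficient to a product of local integrals, insert the Siegel series of Theorems \ref{Siegel-odd} and \ref{Siegel-2} together with Pollack's archimedean formula, recognise the resulting Euler products, and then rewrite via functional equations. The one new feature for $n$ odd is the Dirichlet $L$-factor, which is forced by the extra factors $\bigl(1-\chi_\eta(p)p^{m-s}\bigr)^{-1}$ in the odd-$n$ Siegel series. Since $l>n+1$, we have $l+1>n+2$, so $s=l+1$ lies in the region of absolute convergence of $E_l(g,\Phi_\f,s)$ and no analytic continuation is needed. Take $\Phi_\f={\rm char}_{V'(\widehat\Z)}$. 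For $\eta\in V'(\Q)$ with $q(\eta)>0$, combining (\ref{FCSS}) with the factorisation of $\int_{V'(\A)}$ into local integrals gives, as an identity of $\Bbb V_l$-valued functions of $g_\infty$,
\[
a_E(\eta)\,\mathcal W_{2\pi\eta}(g_\infty)=J(l+1,\eta,f_l)=J(l+1,f_l)\cdot\prod_p J(l+1,\eta,\Phi_p),
\]
where $J(l+1,f_l)$ is the archimedean integral. By Pollack's archimedean computation \cite{Po}, $J(l+1,f_l)=\pi^{2l+1-\frac n2}q(\eta)^{l-\frac n2}\,\mathcal W_{2\pi\eta}(g_\infty)$, so cancelling the nonzero Pollack spherical function yields
\[
a_E(\eta)=\pi^{2l+1-\frac n2}\,q(\eta)^{l-\frac n2}\prod_p J(l+1,\eta,\Phi_p).
\]

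Next I would assemble the Euler product. By Theorems \ref{Siegel-odd} and \ref{Siegel-2}, evaluated at $s=l+1$, each local factor is
\[
J(l+1,\eta,\Phi_p)=\frac{1-p^{-(2l-n+1)}}{1-\chi_\eta(p)\,p^{-(l-\frac{n-1}2)}}\;Q_{\eta,p}\big(p^{\frac{n-1}2-l}\big),
\]
where $\chi_\eta(p)$ is the $p$-component of the Kronecker character of $\Q(\sqrt{\epsilon q(\eta)})$; here one checks that the local symbol of Theorems \ref{Siegel-odd}, \ref{Siegel-2} agrees with the global $\chi_\eta$ by comparing $p$-adic valuations through $q(\eta)=\frak d_\eta\frak f_\eta^2$, and that $Q_{\eta,p}=1$ for every $p\nmid q(\eta)$. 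Hence the infinite product of the "trivial" factors is $\prod_p\bigl(1-p^{-(2l-n+1)}\bigr)\big/\prod_p\bigl(1-\chi_\eta(p)p^{-(l-\frac{n-1}2)}\bigr)=L\bigl(l-\tfrac{n-1}2,\chi_\eta\bigr)\big/\zeta(2l-n+1)$, while the $Q$-factors leave the finite product $\prod_{p\mid q(\eta)}Q_{\eta,p}(p^{\frac{n-1}2-l})$; substituting back gives the first displayed formula of the theorem.

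For the second form I would pass from $Q_{\eta,p}$ to the symmetrised polynomial $\widetilde Q_{\eta,p}(X)=X^{-l_p}Q_{\eta,p}(p^{-\frac12}X)$ (writing $2k+k'=2l_p$ or $2l_p+1$ for $p$ odd, with the analogous normalisation at $p=2$). Using $\widetilde Q_{\eta,p}(X)=\widetilde Q_{\eta,p}(X^{-1})$ one gets $Q_{\eta,p}(p^{\frac{n-1}2-l})=p^{\,l_p(\frac n2-l)}\,\widetilde Q_{\eta,p}(p^{l-\frac n2})$, and since $v_p(\frak f_\eta)=l_p$ for every $p$ (read off from $q(\eta)=\frak d_\eta\frak f_\eta^2$ and the ramification recorded by $\chi_\eta$), the product collapses to $\prod_p Q_{\eta,p}(p^{\frac{n-1}2-l})=\frak f_\eta^{\,\frac n2-l}\prod_p\widetilde Q_{\eta,p}(p^{l-\frac n2})$. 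Writing $q(\eta)^{l-\frac n2}=\frak d_\eta^{\,l-\frac n2}\frak f_\eta^{\,2l-n}$ and feeding in the functional equation of $L(s,\chi_\eta)$ quoted before the theorem, the $\frak d_\eta$-powers cancel and the $\frak f_\eta$-powers collapse to $\frak f_\eta^{\,l-\frac n2}$; finally replacing $\zeta(2l-n+1)$ by the classical value $\zeta(2k)=(-1)^{k+1}B_{2k}(2\pi)^{2k}\big/\big(2\,(2k)!\big)$ with $2k=2l-n+1$ and collecting the remaining elementary constants (the powers of $2$ and $\pi$, the Bernoulli number, the factorials, and the signs) produces $C'_{l,n}$.

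The delicate part is not in these manipulations but in the inputs and the matching. Theorems \ref{Siegel-odd} and especially \ref{Siegel-2} — the $p=2$ Siegel series, whose degree and functional equation depend on whether $\chi_\eta(2)=0$ — are deferred to Section \ref{Siegel}, and one must carefully reconcile the local data $k,k',\chi_\eta(2)$ with the $2$-adic valuations of $\frak d_\eta$ and $\frak f_\eta$, as well as keep Pollack's archimedean normalisation consistent with the $\sqrt2$ conventions of Remark \ref{diffePo} and (\ref{ourueta}). Granting those, the argument above is bookkeeping.
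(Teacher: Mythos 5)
Your proposal is correct and is essentially the paper's own argument: Theorem \ref{FCEisenOdd} is obtained there exactly as you describe, by combining (\ref{FCSS}) with Pollack's archimedean evaluation $J(l+1,f_l)=\pi^{2l+1-\frac n2}q(\eta)^{l-\frac n2}\mathcal W_{2\pi\eta}$, the local Siegel series of Theorems \ref{Siegel-odd} and \ref{Siegel-2} at $s=l+1$ (so $p^{m-s}=p^{\frac{n-1}2-l}$), the resulting Euler products $\zeta(2l-n+1)^{-1}L(l-\tfrac{n-1}2,\chi_\eta)$, and then the quoted functional equation of $L(s,\chi_\eta)$ together with the Bernoulli evaluation of $\zeta(2l-n+1)$ and the passage from $Q_{\eta,p}$ to $\widetilde Q_{\eta,p}$ via $q(\eta)=\frak d_\eta\frak f_\eta^2$. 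The points you flag as delicate (the normalisation of $\widetilde Q_{\eta,2}$, the matching of the local symbols with the global $\chi_\eta$, and the final collection of constants into $C'_{l,n}$) are precisely the bookkeeping the paper also leaves implicit, with the $p=2$ details deferred to \cite{KY2}.
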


We can write $a_E(\eta)=(*)\pi^{l+\frac 12}L(\frac {n-1}2-l,\chi_\eta) \prod_{p} Q_{\eta,p}(p^{l-\frac {n+1}2})$, where $(*)$ is a rational number. Since $L(\frac {n-1}2-l,\chi_\eta)\in\Bbb Q$ and $Q_{\eta,p}(X)\in\Bbb Z[X]$, $a_E(\eta)$ have the bounded denominator.

\subsection{Rank 1 Fourier coefficients}

Let $\eta$ be a rank one element of $V'(\Z)$, i.e., $\eta\ne 0$ and $q(\eta)=0$. Given $\eta\in V'(\Bbb Z)$, define $v_p(\eta)=\max \{n : \, \eta\in p^n V'(\Bbb Z_p)\}$, and define $\sigma_l(\eta)=\prod_p \sigma_{l,p}(\eta)$, where
$\displaystyle\sigma_{l,p}(\eta)=\sum_{i=0}^{v_p(\eta)} p^{il}$. 

\begin{theorem}\label{mainRankone} For each rank one element $\eta\in V'(\Z)$, 
the Fourier coefficient of $E_l(g,\Phi_\f)$ at $\eta$ is
$$2 i^l l! (2\pi)^l \sigma_l(\eta) \mathcal W_{2\pi\eta}(g_\infty).
$$
\end{theorem}

We follow \cite[Section 3.4]{Po1} as suggested by \cite[Section 4.3]{Po}.

\subsubsection{$p$-adic computation}\label{non-archicomp}

For each $p$-adic field, we need the integral
\begin{equation}\label{rank1}
\int_{\eta^\perp(\Bbb Q_p)\backslash V'(\Bbb Q_p)} \psi(\eta,x) f_p(\gamma_\eta n(x)g_p,\Phi_f,s=l+1)\, dx.
\end{equation}
where $\gamma_\eta\in G(\Bbb Q)$ is such that $\gamma_\eta \eta=e$.
Now we use 
$$f_p(g_p,\Phi_f,s)=\int_{\Bbb Q_p^\times} |t|^s \Phi_p(t g_p^{-1}e)\, dt.
$$

Now $n(x)^{-1}\gamma_\eta^{-1}e=n(-x)\eta=\eta-(x,\eta)e$. 
Hence if $g_p=1$, then
$$(\ref{rank1})=\int_{\Bbb Q_p\times} \int_{\eta^\perp(\Bbb Q_p)\backslash V'(\Bbb Q_p)} \psi(\eta,x)|t|^{l+1}\Phi_p(t\eta-t(x,\eta)e) dxdt.
$$
Now, $\eta^{\perp}(\Bbb Q_p)\backslash V'(\Bbb Q_p)$ is identified with $\Bbb Q_p$ by $x\mapsto (x,\eta)$. Hence
$$(\ref{rank1})=\int_{\Bbb Q_p\times} \int_{\Bbb Q_p} \psi(y)|t|^{l+1}\Phi_p(t\eta-tye) dydt.
$$
By the change of variables $y\mapsto y/t$, it becomes
$$\int_{\Bbb Q_p\times} \int_{\Bbb Q_p} \psi(y/t)|t|^{l}\Phi_p(t\eta-ye) dydt.
$$
Now $t\eta+ye=(y,t\eta,0)\in V(\Bbb Q_p)$, and since $\Phi_p$ is the characteristic function of $\Bbb Z_p e\oplus V'(\Bbb Z_p)\oplus \Bbb Z_p f$, 
$$(\ref{rank1})=\sum_{r\geq -v_p(\eta)} p^{-il} \int_{\Bbb Q_p} \psi(y/p^r)\, dy=\sum_{-v_p(\eta)\leq r\leq 0} p^{-il}=\sigma_{l,p}(\eta).
$$

\subsubsection{Archimedean computation}\label{archicomp}

Let $\psi:\R\lra \C^\times,\ x\mapsto e^{2\pi i x}$ be the standard additive character. 
For each $a\in \R^\times$, put $\psi_a:=\psi(a\ast)$ for simplicity. 
For archimedean place, we need the integral
\begin{equation}\label{rank1-a}
I_\psi(g,\eta,\gamma_\eta)=\int_{\eta^\perp(\Bbb R)\backslash V'(\Bbb R)} e^{-2\pi i(\eta,x)} f_l(\gamma_\eta n(x)g,s=l+1)\, dx,
\end{equation}
for $\gamma_\eta\in G(\Bbb Q)$ and $g\in G(\Bbb R)$. We compute it for $g=1$.
Now $n(x)^{-1}\gamma_\eta^{-1}e=n(-x)\eta=\eta-(x,\eta)e=v$. We identify $\eta^{\perp}(\Bbb R)\backslash V'(\Bbb R)$ is identified with $\Bbb R$ by $x\mapsto (x,\eta)=z$. Now let $\gamma_\eta n(x)=p(z)k(z)$, where $p(z)\in P(\Bbb R), k(z)\in K$. Then since $P$ is the stablizer of $e$,
$p(z)e=a(z)e$, for some $a(z)=\nu(p(z))\in\Bbb C$. Hence $k(z)^{-1}p(z)^{-1}e=a(z)^{-1}k(z)^{-1}e=v.$ 
Here we write $v=(-z,\eta,0)$. If $\eta=\eta_2+\eta_n$, where $\eta_2\in V_2, 
\eta_n\in V_n$, then $\|v\|^2=|z|^2+2\|\eta_2\|^2$. Hence $|a(z)|^{-1}=\|v\|$.

Now 
$$f_l(p(z)k(z),s=l+1)=k(z)^{-1}f_l(p(z),s=l+1)=a(z)^{l+1}p_{V_3}(k(z)^{-1}X^lY^l).
$$

Since $k(z)^{-1}e=a(z)v$, 
$$p_{V_3}(k(z)^{-1}e)=\frac 1{\sqrt{2}} p_{V_3}\left(k(z)^{-1}\left(\frac {u_+}{\sqrt{2}}\right)\right)=a(z)p_{V_3}(v),
$$ 
where $p_{V_3}: V=V_3\oplus V_{n+1}\longrightarrow V_3$ is the orthogonal projection. Now under the correspondence, $X^2\to iv_1-v_2, XY\to \frac {u_+}{\sqrt{2}}, Y^2\to iv_1+v_2$. Since $v=\eta-ze=(-\frac z2 u_+ +\eta_2)+(-\frac z2 u_- +\eta_n)$,
$$p_{V_3}(v)=-\frac z2 u_+ +\eta_2=-\frac 12 (A X^2+BXY+CY^2),
$$
where $A=(\eta_2,iv_1+v_2)$, $C=-\bar A$, and $B=\sqrt{2} z$. Then $|A|^2=\|\eta_2\|^2$.
Hence $p_{V_3}(k(z)^{-1}(XY))=\sqrt{2} a(z) p_{V_3}(v)$. Therefore,

$$f_l(p(z)k(z),s=l+1)=2^{-\frac l2}a(z)^{2l+1} (A X^2+BXY+CY^2)^l.
$$

So
$$I_\psi(1,\eta,\gamma_\eta)=2^{-\frac l2}\int_{-\infty}^\infty e^{-2\pi iz} \frac {(AX^2-\bar AY^2+\sqrt{2}zXY)^l}{(z^2+2|A|^2)^{l+\frac 12}}\, dz.
$$

By \cite[Theorem 3.2.4]{Po}, $I_\psi(1,\eta,\gamma_\eta)$ should be a constant multiple of $\mathcal W_{2\pi\eta}(1)$, where

\begin{equation}\label{sphericalW}
\mathcal W_{2\pi\eta}(1)=\sum_{-l\leq v\leq l} i^v\left(\frac A{|A|}\right)^v K_v(2\pi\sqrt{2}|A|)[X^{l+v}][Y^{l-v}].
\end{equation}

Now the coefficient of $[X^{2l}]$ in $(\ref{rank1-a})$ is

\begin{eqnarray*}
&& (2l)! 2^{-\frac l2} A^l\int_{-\infty}^\infty \frac {e^{-2\pi iz}}{(z^2+2|A|^2)^{l+\frac 12}}\, dz
=A^l\frac {2\sqrt{\pi}}{\Gamma(l+1/2)} \pi^l 2^{-l} |A|^{-l} K_l(2\pi\sqrt{2}|A|) \\
&&\phantom{xxxxxxxxxxxxxxxxsssssx} =(2l)! \frac {2\sqrt{\pi}}{\Gamma(l+\frac 12)} \left(\frac {\pi}2\right)^l \left(\frac A{|A|}\right)^{l} K_{l}(2\pi\sqrt{2}|A|).
\end{eqnarray*}
Therefore, 
$$I_\psi(1,\eta,\gamma_\eta)=2 i^l l! (2\pi)^l \mathcal W_{2\pi\eta}(1).
$$

To see that it is correct, let's compute the coefficient of $[X^{2l-1}Y]$ in $(\ref{rank1-a})$. It is

\begin{eqnarray*}
(2l-1)! 2^{-\frac l2} A^{l-1}l\sqrt{2}\int_{-\infty}^\infty \frac {e^{-2\pi iz}z}{(z^2+2|A|^2)^{l+\frac 12}}\, dz
=(-i)(2l)! \frac {2\sqrt{\pi}}{\Gamma(l+\frac 12)} \left(\frac {\pi}2\right)^l \left(\frac A{|A|}\right)^{l-1} K_{l-1}(2\pi\sqrt{2}|A|).
\end{eqnarray*}

\subsubsection{A partial sum of rank one Fourier coefficients}
Let $Q=LU$ be another maximal parabolic $Q$ such that $L\simeq \GL_2\times \SO(V'')$ 
(see \cite[Section 2.2]{KY3}). 
Let $H$ be the subgroup of $L$ corresponding to $\SL_2$ inside the $\GL_2$-factor of $L$. 
Let $B_H$ be the Borel subgroup of $H$ corresponding to the upper Borel subgroup of 
$\SL_2$ and $\SO_H(2)$ be the maximal compact subgroup of $H(\R)$.  
Let $e':=(1,0,\ldots,0)\in V'$ and $w'\in H(\Q)$ be the Weyl element.  
Recall the section $f=\otimes'_p f_p\otimes f_l(\ast,s=l+1)$ of 
${\rm Ind}^{G(\A)}_{P(\A)}|\nu|^{l-\frac{n}{2}}$. 
It is easy to see that 
$f_{l,H}:=f|_{H(\A)}=f_{l,H,\f}\otimes f_{l,H,\infty}$ belongs to ${\rm Ind}^{H(\A)}_{B_H(\A)}|\cdot |^l$ 
(the irreducible normalized induction) such that $f_{l,H,\infty}$ has the $\SO_H(2)$-type  
$\{v\ |\ -l\le v\le l\}\cap 2\Z$ (note that $e^{i\theta}\in U(1)\simeq \SO_H(2)\subset \SO(3)$ 
goes to $\diag(e^{\frac{i\theta}{2}},e^{\frac{-i\theta}{2}})$ under $\SO(3)\simeq\SU(2)/\mu_2$) and $f_{l,H,\f}$ is a spherical section of 
${\rm Ind}^{H(\A_\f)}_{B_H(\A_\f)}|\cdot |^l$. Let $E(h,f_{l,H};s=l)$ be the Eisenstein series on $H(\A)$ associated to the section $f_{l,H}$ at $s=l$. By automorphy and the strong approximation theorem, $E(h,f_{l,H};s=l)=E(h_\infty,f_{l,H};s=l)$ for $h=h_\f h_\infty H(\A)$ since 
$E(h,f_{l,H};s=l)$ is right $H(\widehat{\Z})$-invariant. Thus, the constant term and the non-constant term of $E(h,f_{l,H};s=l)$ are unique. 

\begin{theorem}\label{partialSum} Up to a non-zero constant multiple, the partial sum 
$\ds\sum_{0\neq n\in \Z}\sigma_l(n)\mathcal W_{2\pi n e'}(h_\infty),\ h_\infty\in H(\R)$ 
is the non-constant part of $E(h_\infty,f_{l,H};s=l)$. 
\end{theorem}
\begin{proof} For $\eta=n e'$ with a non-zero integer $n$, the 
element $\gamma_\eta=w'$ works to get the equation (4) of \cite[p.631, line -3 to -2]{Po}. 
By the results in Section \ref{non-archicomp} and \ref{archicomp},  
$\ds\sum_{0\neq n\in \Z}\sigma_l(n)\mathcal W_{2\pi n e'}(h_\infty),\ h_\infty\in H(\R)$ 
is equal to 
$$\int_{\eta^\perp(\A)\bs V'(\A)}\overline{\psi((\eta,x))}f(\gamma_\eta n(x)h_\infty)dx
=\int_{\A}\overline{\psi_n(t)}f_{l,H}(w' n_H(t)h_\infty)dt
$$
where $n_H(t)$ is the coordinate of the unipotent radical of $B_H$. 
A standard argument shows it is the $\psi_n$-th Fourier coefficient of 
$E(h_\infty,f_{l,H};s=l)$. Note that for $a\in \Q$, the $\psi_a$-th Fourier coefficient of 
$E(h_\infty,f_{l,H};s=l)$ is zero unless $a\in \Z$ 
since $E(h,f_{l,H};s=l)$ is of level one. 
\end{proof}

\begin{remark}For $h_\infty=
\left(
\begin{array}{cc}
1 & x \\
0 & 1 
\end{array}
\right)
\left(
\begin{array}{cc}
\sqrt{\sqrt{2}y} & 0 \\
0 & \sqrt{\sqrt{2}y}^{-1} 
\end{array}
\right) \in B_H(\R)$ with $x\in \R$ and $y\in \R_{>0}$ 
(where $\sqrt{2}y$ is due to Pollack's normalization of $u_\eta(t,m)$), it is easy to see that 
$$W_{2\pi n e'}(h_\infty)=e^{2\pi n i x} (\sqrt{2}y)^{\frac{l+1}{2}}\sum_{-l\le v\le l \atop v\in 2\Z}
i^v  K_v(2\sqrt{2}\pi n y) X^{l-v}Y^{l+v} .$$
The partial sum in Theorem \ref{partialSum} may be also expressed in the terminology
of \cite[Section 7.2]{Miyake} by directly computing the Jacquet integral 
$$\ds\int_{\R}\overline{\psi_n(t)}f_{l,H,\infty}(w' n_H(t)h_\infty)dt=
\psi_n(x)\ds\int_{\R}\overline{\psi_n(t)}f_{l,H,\infty}(w' n_H(t)\diag(\sqrt{\sqrt{2}y},
\sqrt{\sqrt{2}y}^{-1}))dt$$ with 
$$f_{l,H,\infty}(w' n_H(t)\diag(\sqrt{\sqrt{2}y},
\sqrt{\sqrt{2}y}^{-1}))=|1-i (\sqrt{2}y)^{-1}t|^{-(l+1)}\Big(\frac{1+i (\sqrt{2}y)^{-1}t}{1-i (\sqrt{2}y)^{-1}t}\Big)^{\frac{v}{2}}
\times i^v,
$$ 
and may be of independent interest.
\end{remark}

\subsection{Constant term of Eisenstein series}
\label{section constant term of Eisenstein series}

By Pollack \cite{Po}, the constant term $E_0$ of $E_{\ell}(g,s=\ell+1)$ is the sum of the following two parts:
\begin{itemize}
    \item $f_{l}(g,\Phi_\f,s=l+1)$,
    \item $E_l^M(g,\Phi_\f,s=l+1)$. It is an Eisenstein series on $\SO(2,n)$, which is a holomorphic modular form of weight $l$.
\end{itemize}

\section{Siegel series}\label{Siegel}

In this section we compute the Siegel series from Section \ref{p-adic}.

\subsection{The case when $n$ is even}
Now suppose $\dim\, V'=n+2=2m$. Since our orthogonal group splits at any $p$, we may assume that 
$$q(x_1,...,x_m,y_1,...,y_m)=x_1y_1+\cdots+x_my_m.$$ 
Let $\eta=(a_1,...,a_m,b_1,...,b_m)$.
Then (\ref{main}) is 
\begin{equation}\label{quad}
B_{r,\eta}=\sum_{u_1,...,u_m,v_1,...,v_m\in\Bbb Z/p^r\Bbb Z\atop u_1v_1+\cdots+u_mv_m\equiv 0 \, \text{mod $p^r$}}
e^{2\pi i \frac 1p(a_1v_1+\cdots+a_mv_m+b_1u_1+\cdots+b_mu_m)}.
\end{equation}
Now note that 
$$
\sum_{x\in\Bbb Z/p^r\Bbb Z}
e^{2\pi i \frac {x}{p^r}(u_1v_1+\cdots+u_mv_m)}=\begin{cases} p^r, &\text{if $u_1v_1+\cdots+u_mv_m\equiv 0$ (mod $p^r$)}\\ 0, &\text{otherwise.}\end{cases}
$$
Hence the above sum is

\begin{eqnarray} 
&& p^{-r} \sum_{u_1,...,u_m,v_1,...,v_m\in\Bbb Z/p^r\Bbb Z}
e^{2\pi i \frac 1{p^r}(a_1v_1+\cdots+a_mv_m+b_1u_1+\cdots+b_mu_m)}\sum_{x\in\Bbb Z/p^r\Bbb Z} e^{2\pi i \frac {x}{p^r}(u_1v_1+\cdots+u_mv_m)} \nonumber\\
&&\phantom{xxxxxx} = p^{-r}\sum_{x\in \Bbb Z/p^r\Bbb Z} \prod_{i=1}^m \left(\sum_{u_i,v_i\in\Bbb Z/p^r\Bbb Z} e^{2\pi i \frac {x}{p^r}(u_iv_i)} e^{2\pi i \frac 1{p^r}(a_iv_i+b_iu_i)}\right).\label{sum1}
\end{eqnarray}

Now we have

\begin{lemma} \label{Gauss-sum}
Let $j=v_p(x)$. Then
$$
\sum_{u,v\in\Bbb Z/p^r\Bbb Z} e^{2\pi i \frac 1{p^r}(xuv+av+bu)}=\begin{cases} p^{2r}, &\text{if $j\geq r$, $p^r|a$, $p^r|b$}\\ 
p^{r+j} e^{2\pi i \frac {-a'b'x_0^{-1}}{p^{r-j}}}, &\text{if $0\leq j<r$, $p^j|a$, $p^j|b$}\\
0, &\text{otherwise}\end{cases},
$$
where $x=p^j x_0$ with $p\nmid x_0$ and $x_0^{-1}$ is taken modulo $p^{r-j}$, and $a=p^j a'$, $b=p^j b'$ when $j<r$.
\end{lemma}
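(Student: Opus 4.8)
The plan is to evaluate the double Gauss-type sum $\sum_{u,v\in\Bbb Z/p^r\Bbb Z} e^{2\pi i \frac 1{p^r}(xuv+av+bu)}$ by performing the inner sum over $v$ first, treating $u$ as a parameter. For fixed $u$, the sum over $v$ is $\sum_{v} e^{2\pi i \frac{v(xu+a)}{p^r}}$, a complete additive character sum over $\Bbb Z/p^r\Bbb Z$, which equals $p^r$ if $p^r\mid xu+a$ and $0$ otherwise. Thus the whole expression collapses to $p^r\sum_{u:\, xu\equiv -a\ (p^r)} e^{2\pi i\frac{bu}{p^r}}$, and everything reduces to analyzing the solution set of the linear congruence $xu\equiv -a\pmod{p^r}$ and summing the character $e^{2\pi i bu/p^r}$ over it.

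The next step is the case analysis on $j=v_p(x)$. If $j\ge r$, i.e. $p^r\mid x$, the congruence $xu\equiv -a$ forces $p^r\mid a$, and in that case every $u$ works and the remaining sum $\sum_u e^{2\pi i bu/p^r}$ is $p^r$ if $p^r\mid b$ and $0$ otherwise; this gives the first and (part of the) third line. If $0\le j<r$, write $x=p^j x_0$ with $p\nmid x_0$; the congruence $p^j x_0 u\equiv -a\pmod{p^r}$ is solvable iff $p^j\mid a$, say $a=p^j a'$, in which case it is equivalent to $x_0 u\equiv -a'\pmod{p^{r-j}}$, with unique solution $u\equiv -a' x_0^{-1}\pmod{p^{r-j}}$ (here $x_0^{-1}$ is the inverse mod $p^{r-j}$). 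The solution set for $u$ mod $p^r$ is then a coset $u_1 + p^{r-j}\Bbb Z/p^r\Bbb Z$ of size $p^j$, where $u_1\equiv -a'x_0^{-1}\pmod{p^{r-j}}$.

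Finally I would evaluate $p^r\sum_{u\in u_1+p^{r-j}\Bbb Z/p^r\Bbb Z} e^{2\pi i bu/p^r}$. Writing $u=u_1+p^{r-j}t$ with $t$ ranging over $\Bbb Z/p^j\Bbb Z$, this factors as $p^r e^{2\pi i bu_1/p^r}\sum_{t\ \mathrm{mod}\ p^j} e^{2\pi i bt/p^j}$; the inner sum is $p^j$ if $p^j\mid b$ and $0$ otherwise. So the sum vanishes unless both $p^j\mid a$ and $p^j\mid b$, and when $b=p^jb'$ one gets $p^{r+j}e^{2\pi i bu_1/p^r}$. It remains to check $bu_1\equiv -a'b'x_0^{-1}\pmod{p^r}$: indeed $b u_1 = p^j b' u_1$ and $b' u_1\equiv -a'b'x_0^{-1}\pmod{p^{r-j}}$, so $b u_1 \equiv -a'b'x_0^{-1}p^j\pmod{p^r}$, i.e. $bu_1/p^r \equiv -a'b'x_0^{-1}/p^{r-j}$ in $\Bbb Q/\Bbb Z$, matching the stated exponent. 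The only mild subtlety — the main place to be careful — is tracking the moduli: $x_0^{-1}$ only makes sense mod $p^{r-j}$, so one must verify that $e^{2\pi i(-a'b'x_0^{-1})/p^{r-j}}$ is well-defined (it is, since changing $x_0^{-1}$ by a multiple of $p^{r-j}$ changes $-a'b'x_0^{-1}$ by a multiple of $p^{r-j}$) and that the final exponent genuinely has denominator dividing $p^{r-j}$ rather than $p^r$; the factor $p^j$ from the coset size is exactly what absorbs this. No single step is hard; the bookkeeping of $p$-adic valuations across the three cases is the only thing that needs care.
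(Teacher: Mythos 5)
Your proof is correct and follows essentially the same route as the paper: sum over $v$ first to reduce to the linear congruence $xu\equiv -a\pmod{p^r}$, then split into the cases $j\ge r$ and $j<r$, parametrize the solution coset $u=u_0+p^{r-j}t$, and evaluate the remaining character sum in $t$. Your check that the exponent $bu_1/p^r$ equals $-a'b'x_0^{-1}/p^{r-j}$ in $\Bbb Q/\Bbb Z$ and is well defined matches the paper's computation exactly.
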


\begin{proof}
Let $f(x,a,b)$ be the exponential sum. 
Put $N=p^r$, and $\zeta_N=e^{2\pi i /N}$. Then 
$$f(x,a,b)=\sum_{\text{$u$ (mod $N$)}} \zeta_N^{bu} \sum_{\text{$v$ (mod $N$)}} \zeta_N^{(xu+a)v}=N \sum_{\text{$u$ (mod $N$)}\atop \text{$xu\equiv -a$ (mod $N$)}} \zeta_N^{bu}.
$$
Case A: $j\geq r$. Then if $p^r\nmid a$, it is clear that $f(x,a,b)=0$. Suppose $p^r|a$. Then it is easy to see 
$$f(x,a,b)=N\sum_{\text{$u$ (mod $N$)}} \zeta_N^{bu}=\begin{cases} N^2=p^{2r}, &\text{if $p^r|b$}\\ 0, &\text{otherwise}\end{cases}.
$$
In conclusion, we have, if $j\geq r$,
$$f(x,a,b)=\begin{cases} p^{2r}, &\text{if $p^r|a$ and $p^r|b$}\\ 0, &\text{otherwise}\end{cases}.
$$

Case B: $0\leq j<r$. The congruence $xu\equiv -a$ (mod $N$) is solvable if and only if $p^j|a$. Then the solutions are
$$\text{$u\equiv u_0$ (mod $p^{r-j}$), $u_0\equiv -a'x_0^{-1}$ (mod $p^{r-j}$)},
$$
and there are exactly $p^j$ of them: $u=u_0+t p^{r-j}$ with $t=0,1,...,p^j-1$. Hence
$$f(x,a,b)=N\sum_{t=0}^{p^j-1} \zeta_N^{b(u_0+tp^{r-j})}=N\zeta_N^{bu_0} \sum_{t=0}^{p^j-1} e^{2\pi i \frac {bt}{p^j}}.
$$
Therefore,
$$f(x,a,b)=\begin{cases} Np^{j}\zeta_N^{bu_0}=p^{r+j} e^{2\pi i \frac {b'u_0}{p^{r-j}}}, &\text{if $p^j|a$ and $p^j|b$}\\ 0, &\text{otherwise}\end{cases}.
$$
This proves the lemma.
\end{proof}

So if $j<r$, the sum over elements $x=p^j c$, $p\nmid c$ in (\ref{sum1}) is
$$
\begin{cases} p^{rm-r+jm}\sum_{c\in\Bbb Z/p^{r-j}\Bbb Z,\, p\nmid c } e^{-2\pi i \frac 1{p^{r+j}} c^{-1}(a_1b_1+\cdots+a_mb_m)}, &\text{if $a_i\equiv 0$\, mod $p^j$, $b_i\equiv 0$\, mod $p^j$ for each $i$}\\ 0, &\text{otherwise}.
\end{cases}
$$
By setting $a_i=p^j a_i', b_i=p^j b_i'$ for each $i$, the above sum is,
\begin{equation}\label{Ramanujan}
\sum_{c\in\Bbb Z/p^{r-j}\Bbb Z, \, p\nmid c} e^{2\pi i \frac 1{p^{r-j}} c(a_1'b_1'+\cdots+a_m'b_m')}.
\end{equation}
 It is the Ramanujan sum. By \cite[p. 44]{IK}, it is
$$\sum_{d|(p^{r-j},A_j)} d \mu\Big(\frac {p^{r-j}}d\Big),
$$
where $\mu(n)$ is the M\"obius function and $A_j=a_1'b_1'+\cdots+a_m'b_m'$.
 Then
$$(\ref{Ramanujan})=\begin{cases} p^{r-j}-p^{r-j-1}, &\text{if $p^{r-j}|A_j$}\\ -p^{r-j-1}, &\text{if $p^{r-j}\nmid A_j$ but $p^{r-j-1}|A_j$}\\ 0, &\text{if $p^{r-j-1}\nmid A_j$ ($r-j>1$)}\end{cases}.
$$

Therefore, if $\eta\equiv 0$ (mod $p^r$), (\ref{sum1}) is 
$$p^{2rm-r}+\sum_{j=0}^{r-1} p^{rm-r+jm} (p^{r-j}-p^{r-j-1})
=p^{2rm-r}+\sum_{j=0}^{r-1} (p^{rm+jm-j}-p^{rm+jm-j-1}).
$$

Hence, $J(s,\eta,\Phi_p)=0$ unless $\eta\in V'(\Bbb Z_p)$. Now let $p^{k}\|\eta$, and $p^{2k+k'}\|q(\eta)$ for some $k'$.
Then
$$J(s,\eta,\Phi_p)=\sum_{r=0}^\infty p^{-rs} B_{r,\eta},
$$
where $B_{0,\eta}=1$, and if $r\leq k$, since $\eta\equiv 0$ (mod $p^r$),
$$B_{r,\eta}=p^{2rm-r}+\sum_{j=0}^{r-1} (p^{rm+jm-j}-p^{rm+jm-j-1}).
$$

If $r>k$,
(\ref{quad}) is 
$$B_{r,\eta}=\sum_{j=0}^k p^{rm-r+jm}\sum_{c\in\Bbb Z/p^{r-j}\Bbb Z,\, p\nmid c } e^{2\pi i \frac 1{p^{r+j}} 2c(a_1b_1+\cdots+a_mb_m)}
=\sum_{j=0}^k p^{rm-r+jm} \left(\sum_{d|(p^{r-j},A_j)} d \mu\Big(\frac {p^{r-j}}d\Big)\right),
$$
where $A_j=p^{-2j}q(\eta)$.

If $r\geq 2k+k'+2$, $B_{r,\eta}=0$. And if $r=2k+k'+1$,
$$B_{2k+k'+1,\eta}=-p^{(2k+k'+1)m-1}.
$$

If $r=2k+k'$,
$$B_{2k+k',\eta}=\begin{cases} p^{k'm-k'}(p^{k'}-p^{k'-1}), &\text{if $k=0$}\\
p^{rm-r}(p^{2k+k'}-p^{2k+k'-1})+p^{rm-r+m}(-p^{2k+k'-2})
=p^{rm}-p^{rm-k'}-p^{rm+m-2}, &\text{if $k>0$.}\end{cases}
$$

If $k< r\leq k+k'$, then $r-j\leq k+k'-j\leq 2k+k'-2j$. Hence 
$$
\sum_{d|(p^{r-j},A_j)} d \mu\Big(\frac {p^{r-j}}d\Big)=p^{r-j}-p^{r-j-1}.
$$
Therefore,
$$B_{r,\eta}=\sum_{j=0}^k p^{rm-r+jm}(p^{r-j}-p^{r-j-1}).
$$

If $k+k'< r\leq 2k+k'-1$, then $r-j\leq 2k+k'-2j$ implies $j\leq 2k+k'-r$. So
\begin{eqnarray*}
&& B_{r,\eta}=\sum_{j=0}^{2k+k'-r} p^{rm-r+jm}(p^{r-j}-p^{r-j-1})+p^{rm-r+m(2k+k'-r+1)} (-p^{2r-2k-k'-2}) \\
&& =\sum_{j=0}^{2k+k'-r} p^{rm}(p^{j(m-1)}-p^{j(m-1)-1}) -p^{r+(2k+k')(m-1)+m-2}.
\end{eqnarray*}
This is valid also for $r=2k+k'$.

When $r=1$,
$$
B_{1,\eta}=\begin{cases} p^{2m-1}+p^m-p^{m-1}, &\text{if $\eta\equiv 0$ (mod $p$)}\\ -p^{m-1}, &\text{if $\eta\not\equiv 0$ (mod $p$) and $q(\eta)\not\equiv 0$ (mod $p$)}\\ p^m-p^{m-1}, &\text{if $\eta\not\equiv 0$ (mod $p$) and $q(\eta)\equiv 0$ (mod $p$).}\end{cases}
$$

If $\eta\equiv 0$ (mod $p$),
$B_{1,\eta}=C(n)=\#\{ u\in V'(\Bbb Z/p\Bbb Z) | \, q(u)\equiv 0\, \text{mod $p$}\}$.
This gives another proof of Lemma 4.4.2 and Lemma 4.4.3 in \cite{Po}.

\noindent{\it Proof of Theorem \ref{Siegel-even}.}
Consider 
$J(s,\eta,\Phi_p)(1-p^{m-1-s})^{-1}$. Namely,
$$\left(\sum_{r=0}^\infty B_{r,\eta}p^{-rs}\right) (1-p^{m-1-s})^{-1}
=\sum_{a=0}^\infty \left(\sum_{r=0}^a B_{r,\eta} p^{(m-1)(a-r)}\right) p^{-as}.
$$

We claim that if $a\geq 2k+k'+1$, $\sum_{r=0}^a B_{r,\eta} p^{(m-1)(a-r)}=0$.
Then this shows that
$$Q_{\eta,p}(p^{m-s})=\sum_{a=0}^{2k+k'} \left(\sum_{r=0}^a B_{r,\eta} p^{(m-1)(a-r)}\right) p^{-as}.
$$

First, consider the case $k=0$. Then if $a\geq k'+1$, 

$$ \sum_{r=0}^a B_{r,\eta} p^{-r(m-1)}=1+\sum_{r=1}^{k'-1} (p^r-p^{r-1})+p^{k'}(1-p^{-1})-p^{k'}=0.
$$

Now let $k>0$. If $a\geq 2k+k'+1$, 
\begin{eqnarray*}
&& \sum_{r=0}^a B_{r,\eta} p^{-r(m-1)}=\sum_{r=0}^k \left(p^{rm}+\sum_{j=0}^{r-1} (p^{j(m-1)+r}-p^{j(m-1)+r-1})\right) +
\sum_{r=k+1}^{k+k'} \sum_{j=0}^k (p^{j(m-1)+r}-p^{j(m-1)+r-1}) \\
&& +\sum_{r=k+k'+1}^{2k+k'} \left(\sum_{j=0}^{2k+k'-r} (p^{j(m-1)+r}-p^{j(m-1)+r-1}) -p^{-rm+2r+(2k+k')(m-1)+m-2}\right) -p^{2k+k'}.
\end{eqnarray*}

By Mathematica, we can show that it is zero. By Mathematica, we can show that $Q_{\eta,p}(p^{m-s})$ is a polynomial in $p^{m-s}$:
$$Q_{\eta,p}(p^{m-s})=\sum_{a=0}^{2k+k'} A_{a,\eta} (p^{m-s})^a,
$$
where 
\begin{equation}\label{A_a}
A_{a,\eta}=\begin{cases} 1+p^{m-1}+\cdots+(p^{m-1})^a, &\text{if $0\leq a\leq k$,}\\
1+p^{m-1}+\cdots+(p^{m-1})^k, &\text{if $k+1\leq a\leq k+k'$,}\\
1+p^{m-1}+\cdots+(p^{m-1})^{2k+k'-a}, &\text{if $k+k'+1\leq a\leq 2k+k'$.}
\end{cases}
\end{equation}

Let $X=p^{m-s}$, and we can write
$$Q_{\eta,p}(X)=\sum_{a=0}^{k-1} A_{a,\eta} (X^a+X^{2k+k'-a})+\frac 12 A_{k,\eta} \sum_{a=k}^{k+k'} (X^a+X^{2k+k'-a}).
$$
Then it satisfies the functional equation $X^{2k+k'}Q_{\eta,p}(X^{-1})=Q_{\eta,p}(X).$

\subsection{The case when $n$ is odd} 
If $n$ is odd, let ${\rm dim}\, V'=n+2=2m+1$. In this case, we need to modify the previous 
argument. 
Since the orthogonal group splits at $p$, we may assume that 
$$q(u_1,...,u_m,v_1,...,v_m,u_0)=u_1v_1+\cdots+u_mv_m+u_0^2.
$$
For $\eta=(a_1,...,a_m,b_1,...,b_m,a_0)$, $u=(u_1,...,u_m,v_1,...,v_m,u_0)$,
$$(\eta,u)=a_1v_1+\cdots+a_mv_m+b_1u_1+\cdots+b_mu_m+2a_0u_0.
$$

Then 
$$
B_{r,\eta}= p^{-r}\sum_{x\in \Bbb Z/p^r\Bbb Z} \left(\sum_{u_0\in\Bbb Z/p^r\Bbb Z} e^{\frac {2\pi i}{p^r}(xu_0^2+2a_0u_0)}\right) 
\prod_{i=1}^m \left(\sum_{u_i,v_i\in\Bbb Z/p^r\Bbb Z} e^{2\pi i \frac {x}{p^r}(u_iv_i)} e^{2\pi i \frac 1{p^r}(a_iv_i+b_iu_i)}\right).
$$

Now we need to divide into two cases.

\subsubsection{The case when $p>2$ {\rm(}and $n$ is odd{\rm)}}
Recall the following exponential sums \cite{Wh}, a generalization of the Ramanujan sum (\ref{Ramanujan}): Let $p$ be an odd prime. Let $\sum_{\text{$n$ mod $p^\alpha$}}'$ mean the sum
$\displaystyle\sum_{\text{$n$ mod $p^\alpha$}\atop p\nmid n}$. Then if $\alpha$ is odd, and we put $h_1=\frac h{p^{\alpha-1}}$ 
when $p^{\alpha-1}\|h$, we have

$${\sum}'_{\text{$n$ mod $p^\alpha$}} \Big(\frac h{p^\alpha}\Big) e^{\frac {2\pi i hn}{p^\alpha}}=\begin{cases} 0, &\text{if $p^\alpha|h$}\\ i^{\frac {(p-1)^2}4} \Big(\frac {h_1}p\Big) p^{\alpha-\frac 12}, &\text{if $p^\alpha\nmid h$ but $p^{\alpha-1}|h$}\\ 0, &\text{if $p^{\alpha-1}\nmid h$ ($\alpha>1$)}\end{cases}.
$$

Now for $x=p^j c$ with $p\nmid c$ and $j<r$. Then
$
\sum_{u_0\in \Bbb Z/p^{r}\Bbb Z} e^{\frac {2\pi i}{p^r}(xu_0^2+a_0u_0)}=0$ unless $a_0\equiv 0$ (mod $p^j$). 
Let $a_0=p^j a_0'$. Let $c^{-1}$ be taken modulo $p^{r-j}$. Then
\begin{eqnarray}\label{p>2}
\sum_{u_0\in \Bbb Z/p^{r}\Bbb Z} e^{\frac {2\pi i}{p^r}(xu_0^2+2a_0u_0)}
= e^{-2\pi i \frac 1{p^{r-j}} (a_0')^2 c^{-1}} p^{\frac {r+j}2} \begin{cases} 1, &\text{if $r-j$ even}\\ (\frac {c}p) \sqrt{(\frac {-1}p)}, &\text{if $r-j$ odd}\end{cases}.
\end{eqnarray}

In summary, we have the following: Recall
$$J(s,\eta,\Phi_p)=\sum_{r=0}^\infty p^{-rs} B_{r,\eta},\quad B_{0,\eta}=1.
$$
Suppose $p^k\|\eta$ and $p^{2k+k'}\|q(\eta)$. Let $q(\eta)_1=q(\eta)/p^{2k+k'}$. 
If $k\geq r$, then $\eta\equiv 0$ (mod $p^r$). 
Then
$$B_{r,\eta}=p^{2rm}+\sum_{j=0\atop \text{$r-j$ even}}^{r-1} p^{rm+jm-r} (p^{r-j}-p^{r-j-1})p^{\frac {r+j}2}.
$$

If $k<r$,

$$B_{r,\eta}=\sum_{j=0\atop \text{$r-j$ even}}^{k} p^{rm+jm+\frac {-r+j}2} C_{r,j}+\sum_{j=0\atop \text{$r-j$ odd}}^{k} p^{rm+jm+\frac {-r+j}2} D_{r,j},
$$
where $A_j=p^{-2j}q(\eta)$, and 
$$C_{r,j}=\begin{cases} p^{r-j}-p^{r-j-1}, &\text{if $p^{r-j}| A_j$}\\ -p^{r-j-1}, &\text{if $p^{r-j}\nmid A_j$ but $p^{r-j-1}| A_j$}\\ 0, &\text{if $p^{r-j-1}\nmid A_j$ ($r-j>1$)}\end{cases},
$$ 
$$D_{r,j}=\begin{cases} 0, &\text{if $p^{r-j}| A_j$}\\ \Big(\frac {q(\eta)_1}p\Big) p^{r-j-\frac 12}, &\text{if $p^{r-j}\nmid A_j$ but $p^{r-j-1}| A_j$}\\ 0, &\text{if $p^{r-j-1}\nmid A_j$ ($r-j>1$)}\end{cases}.
$$

\noindent{\it Proof of Theorem \ref{Siegel-odd}.} Suppose $2k+k'$ is odd. Consider $(1-p^{2(m-s)})^{-1}J(s,\eta,\Phi_p)$. Then by change of variables,
$$(1-p^{2(m-s)})^{-1}J(s,\eta,\Phi_p)=\sum_{b=0}^\infty \left(\sum_{r=0\atop \text{$b-r$ even}}^b p^{-mr} B_{r,\eta}\right) p^{(m-s)b}.
$$

Let $C(k,k',b)=\ds\sum_{r=0\atop \text{$b-r$ even}}^b p^{-mr} B_{r,\eta}$. 
We show that if $b\geq 2k+k'$, $C(k,k',b)=0$.

We write $C(k,k',b)=S_{\le k}+S_{\ge k+1}$ where 
\[
S_{\le k}:=\sum_{\substack{0\le r\le k\\ r\equiv b(2)}}p^{-mr}B_{r,\eta},\quad
S_{\ge k+1}:=\sum_{\substack{k+1\le r\le b\\ r\equiv b(2)}}p^{-mr}B_{r,\eta}.
\]

We first evaluate $S_{\le k}$. 
From the definition of $B_{r,\eta}$ for $r\le k$ we have
\[
S_{\le k}
=\sum_{\substack{0\le r\le k\\ r\equiv b(2)}}p^{rm}
+\sum_{\substack{0\le r\le k\\ r\equiv b(2)}}\;
\sum_{\substack{0\le j\le r-1\\ r-j\ \mathrm{even}}}
p^{jm-r}\Bigl(p^{\,r-j}-p^{\,r-j-1}\Bigr)p^{\frac{r+j}{2}}.
\]
Write $r=j+2t$ with $t\ge1$. Then necessarily $r\equiv j\ (\mathrm{mod}\ 2)$, hence $j\equiv b\ (\mathrm{mod}\ 2)$. 
As for the second term of $S_{\le k}$, 
\[
p^{jm-r}\,p^{\frac{r+j}{2}}\Bigl(p^{\,r-j}-p^{\,r-j-1}\Bigr)
= p^{jm+t}-p^{jm+t-1}.
\]
Therefore, for each fixed $j$ with $0\le j\le k$ and $j\equiv b(2)$,
\[
\sum_{t=1}^{\lfloor (k-j)/2\rfloor}\bigl(p^{jm+t}-p^{jm+t-1}\bigr)
= p^{\,jm+\lfloor (k-j)/2\rfloor}-p^{jm}.
\]
Using 
\(
\sum_{\substack{0\le r\le k\\ r\equiv b(2)}}p^{rm}
=\sum_{\substack{0\le j\le k\\ j\equiv b(2)}}p^{jm}
\),
we have 
\begin{equation}\label{eq:Sle-par}
S_{\le k}
=\sum_{\substack{0\le j\le k\\ j\equiv b(2)}} p^{\,jm+\lfloor (k-j)/2\rfloor}.
\end{equation}

Next we study $S_{\ge k+1}$. 
For $r\ge k+1$, by definition
\[
S_{\ge k+1}
=\sum_{\substack{k+1\le r\le b\\ r\equiv b(2)}}\;
\sum_{\substack{0\le j\le k\\ r-j\ \mathrm{even}}}
p^{jm+\frac{-r+j}{2}}\;C_{r,j}.
\]
We write 
\[
S_{\ge k+1}=U+V,\ \text{ where }
\begin{cases}
U:=\displaystyle\sum_{\substack{k+1\le r\le b\\ r\equiv b(2)}}
\ \sum_{\substack{0\le j\le k\\ r-j\ \mathrm{even}\\ r+j\le 2k+k'}} 
p^{jm+\frac{-r+j}{2}}\bigl(p^{\,r-j}-p^{\,r-j-1}\bigr),\\[12pt]
V:=\displaystyle\sum_{\substack{k+1\le r\le b\\ r\equiv b(2)}}
\ \sum_{\substack{0\le j\le k\\ r-j\ \mathrm{even}\\ r+j=2k+k'+1}} 
\Bigl(-\,p^{jm+\frac{-r+j}{2}}\,p^{\,r-j-1}\Bigr).
\end{cases}
\]
Write $r=j+2t$ again. The parity condition $r\equiv b(2)$ yields $j\equiv b(2)$.  
The lower bound $r\ge k+1$ gives 
\(
t\ge t_{\min}(j):=\bigl\lceil (k+1-j)/2\bigr\rceil
\).
The inequality $r+j\le 2k+k'$ gives
\(
t\le t_{\max}(j):=\bigl\lfloor(2k+k'-2j)/2\bigr\rfloor
\).
As for $U$, since $b\geq 2k+k'$,
\[
p^{jm+\frac{-r+j}{2}}\bigl(p^{\,r-j}-p^{\,r-j-1}\bigr)=p^{jm+t}-p^{jm+t-1}.
\]
Thus
\begin{equation}\label{eq:U-par}
U=\sum_{\substack{0\le j\le k\\ j\equiv b(2)}}
\ \sum_{t=t_{\min}(j)}^{t_{\max}(j)}\bigl(p^{jm+t}-p^{jm+t-1}\bigr)
=\sum_{\substack{0\le j\le k\\ j\equiv b(2)}}\Bigl(p^{jm+t_{\max}(j)}-p^{jm+t_{\min}(j)-1}\Bigr).
\end{equation}
As for $V$, the condition $r+j=2k+k'+1$ yields $t=t_{\max}(j)+1$. Since $b\geq 2k+k'$,
\begin{equation}\label{eq:V-par}
V=\sum_{\substack{0\le j\le k\\ j\equiv b(2)}}
\Bigl(-\,p^{jm+t_{\max}(j)}\Bigr).
\end{equation}
Combining \eqref{eq:U-par} and \eqref{eq:V-par} yields
\begin{equation}\label{eq:Sge-par}
S_{\ge k+1}
= -\sum_{\substack{0\le j\le k\\ j\equiv b(2)}} p^{\,jm+t_{\min}(j)-1}, 
\qquad t_{\min}(j)=\Bigl\lceil\frac{k+1-j}{2}\Bigr\rceil.
\end{equation}

For every integer $0\le j\le k$,
\[
\Bigl\lfloor\frac{k-j}{2}\Bigr\rfloor
=\Bigl\lceil\frac{k+1-j}{2}\Bigr\rceil-1.
\]
Using \eqref{eq:Sle-par} and \eqref{eq:Sge-par},
\[
S_{\le k}
=\sum_{\substack{0\le j\le k\\ j\equiv b(2)}} p^{\,jm+\lfloor (k-j)/2\rfloor}
=\sum_{\substack{0\le j\le k\\ j\equiv b(2)}} p^{\,jm+t_{\min}(j)-1}
=-\,S_{\ge k+1}.
\]
Hence $C(k,k',b)=S_{\le k}+S_{\ge k+1}=0$.

For the functional equation, we compute the coefficients $C(k,k',b)$ explicitly case by case. We compute one case, $k$ even.
Let $k=2a$ and $2k+k'=2l+1$. Then we prove
\begin{equation}\label{n-odd-C}
C(k,k',b)=\begin{cases} \ds\sum_{u=0\atop \text{$b-u$ even}}^b p^{mu} p^{\frac {b-u}2}, &\text{if $b<k$}\\
\ds\sum_{u=0\atop \text{$b-u$ even}}^{k} p^{mu}p^{\frac {b-u}2}, &\text{if $k\leq b\leq k+k'-1$}\\
\ds\sum_{u=0\atop \text{$b-u$ even}}^{2k+k'-1-b} p^{mu}p^{\frac {b-u}2}, &\text{if $k+k'\leq b\leq 2k+k'-1$}.
\end{cases}
\end{equation}
Suppose $b$ is odd. Let $b=2j+1$. Then if $b\leq k$,
\begin{eqnarray*}
&& \sum_{r=0\atop \text{$r$ odd}}^b B_{r,\eta}p^{-rm}=\sum_{i=0}^j B_{2i+1,\eta}p^{-m(2i+1)}\\
&&=\sum_{i=0}^j (p^{m(2i+1)}+\sum_{u=0}^{i-1} p^{m(2u+1)}(p^{i-u}-p^{i-u-1}))=\sum_{u=0}^j p^{m(2u+1)}p^{j-u}.
\end{eqnarray*}

If $k\leq b<k+k'$,  
$$ \sum_{r=0\atop \text{$r$ odd}}^b B_{r,\eta}p^{-rm}=\sum_{i=0}^{a-1} +\sum_{i=a}^j.
$$
Here by change of variables,
$$\sum_{i=0}^{a-1}= \sum_{u=0}^{a-1} p^{m(2u+1)}p^{a-1-u}.
$$
$$\sum_{i=a}^j= \sum_{i=a}^j\sum_{u=0\atop i+u\leq l-1}^{a-1} p^{m(2u+1)}(p^{i-u}-p^{i-u-1})=\sum_{u=0}^{a-1} p^{m(2u+1)}(p^{j-u}-p^{a-u-1}),
$$
since $i+u\leq j+a-1\leq l-1$. So $i+u=l$ cannot occur.
Hence if $k\leq b<k+k'$, 
$$\sum_{r=0\atop \text{$r$ odd}}^b B_{r,\eta}p^{-rm}=\sum_{u=0}^{a-1} p^{m(2u+1)}p^{j-u}.
$$

If $b\geq k+k'$, 
$$ \sum_{r=0\atop \text{$r$ odd}}^b B_{r,\eta}p^{-rm}=\sum_{i=0}^{a-1} +\sum_{i=a}^j,
$$
where 
$$\sum_{i=0}^{a-1}= \sum_{u=0}^{a-1} p^{m(2u+1)}p^{a-1-u},
$$
$$\sum_{i=a}^j= \sum_{i=a}^j\sum_{u=0\atop i+u\leq l-1}^{a-1} p^{m(2u+1)}(p^{i-u}-p^{i-u-1})+
\sum_{i=a}^j\sum_{u=0\atop i+u=l}^{a-1} p^{m(2u+1)}(-p^{i-u-1}).
$$
Now the first sum is
\begin{eqnarray*}
&& \sum_{u=0}^{a-1}\sum_{i=a}^{\text{min$(j,l-1-u)$}} p^{m(2u+1)}(p^{i-u}-p^{i-u-1}))\\
&&= \sum_{u=0}^{l-1-j} p^{m(2u+1)}p^{j-u}+\sum_{u=l-j}^{a-1} p^{m(2u+1)}p^{l-2u-1}-\sum_{u=0}^{a-1} p^{m(2u+1)}p^{a-u-1}.
\end{eqnarray*}

In the second sum, if $j<l-u$, $u<l-j$, and it is an empty sum. If $j\geq l-u$, $u\geq l-j$, and it becomes
$\sum_{u=l-j}^{a-1} p^{m(2u+1)}(-p^{l-2u-1})$. By summing up, we have, if $b\geq k+k'$, 
$$ \sum_{r=0\atop \text{$r$ odd}}^b B_{r,\eta}p^{-rm}=\sum_{u=0}^{l-1-j} p^{m(2u+1)}p^{j-u}.
$$

The case of $b$ even is similar.

\medskip

Suppose $2k+k'$ is even. Let $\ve=\chi_\eta(p)$, and consider $(1+\ve p^{m-s})^{-1}J(s,\eta,\Phi_p)$. Then by change of variables,
$$(1+\ve p^{m-s})^{-1}J(s,\eta,\Phi_p)=\sum_{b=0}^\infty (-\ve)^b p^{(m-s)b} \left(\sum_{r=0}^b (-\ve)^r p^{-mr} B_{r,\eta}\right).
$$
Let $C^\ve(k,k',b)=\sum_{r=0}^b (-\ve)^r p^{-mr} B_{r,\eta}$.
We prove that $C^\ve(k,k',b)=0$ if $b\geq 2k+k'+1$.

We write
$C^\ve(k,k',b)=S_{\le k}+S_{\ge k+1}$ where 
\[
S_{\le k}:=\sum_{0\le r\le k}(-\varepsilon)^r p^{-mr}B_{r,\eta},\quad
S_{\ge k+1}:=\sum_{k+1\le r\le b}(-\varepsilon)^r p^{-mr}B_{r,\eta}.
\]

We first evaluate $S_{\le k}$:
\[
S_{\le k}
=\sum_{0\le r\le k}(-\varepsilon)^r p^{rm}
+\sum_{0\le r\le k}\;
\sum_{\substack{0\le j\le r-1\\ r-j\ \mathrm{even}}}
(-\varepsilon)^r\,p^{jm-r}\Bigl(p^{\,r-j}-p^{\,r-j-1}\Bigr)p^{\frac{r+j}{2}}.
\]
Put $r=j+2t$ with $t\ge1$. Then $r\equiv j\ (\mathrm{mod}\ 2)$ and $(-\varepsilon)^r=(-\varepsilon)^j$. 
Since
\[
p^{jm-r}\,p^{\frac{r+j}{2}}\Bigl(p^{\,r-j}-p^{\,r-j-1}\Bigr)
= p^{jm+t}-p^{jm+t-1}.
\]
 for each fixed $j$ with $0\le j\le k$,
\[
\sum_{t=1}^{\lfloor (k-j)/2\rfloor}\bigl(p^{jm+t}-p^{jm+t-1}\bigr)
= p^{\,jm+\lfloor (k-j)/2\rfloor}-p^{jm}.
\]
Using $\sum_{0\le r\le k}(-\varepsilon)^r p^{rm}
=\sum_{0\le j\le k}(-\varepsilon)^j p^{jm}$,
we obtain
$$
S_{\le k}
=\sum_{0\le j\le k}(-\varepsilon)^j\,p^{\,jm+\lfloor (k-j)/2\rfloor}.
$$

Next we evaluate $S_{\ge k+1}$.
For $r\ge k+1$, 
$$S_{\ge k+1}=\sum_{k+1\le r\le b}\;
\sum_{\substack{0\le j\le k\\ r-j\ \mathrm{even}}}
(-\varepsilon)^r\,p^{jm+\frac{-r+j}{2}}\,C_{r,j}
+\sum_{k+1\le r\le b}\;
\sum_{\substack{0\le j\le k\\ r-j\ \mathrm{odd}}}
(-\varepsilon)^r\,p^{jm+\frac{-r+j}{2}}\,D_{r,j}.
$$

Write $r=j+2t$ in the first double sum and $r=j+2t+1$ in the second. 
For the first sum, the inequalities $r\ge k+1$ and $r+j\le 2k+k'$ are equivalent to
\[
t_{\min}(j)\le t\le t_{\max}(j),
\qquad
t_{\min}(j):=\Bigl\lceil\frac{k+1-j}{2}\Bigr\rceil,\quad
t_{\max}(j):=\Bigl\lfloor\frac{2k+k'-2j}{2}\Bigr\rfloor .
\]
On this range,
\[
(-\varepsilon)^r\,p^{jm+\frac{-r+j}{2}}\,C_{r,j}
=(-\varepsilon)^j\bigl(p^{jm+t}-p^{jm+t-1}\bigr).
\]
Hence for a fixed $j$,
$$
\sum_{t=t_{\min}(j)}^{t_{\max}(j)}(-\varepsilon)^r\,p^{jm+\frac{-r+j}{2}}\,C_{r,j}
=(-\varepsilon)^j\Bigl(p^{jm+t_{\max}(j)}-p^{jm+t_{\min}(j)-1}\Bigr).
$$

For the second sum (odd $r-j$), nonzero terms occur only when $r+j=2k+k'+1$ and it corresponds to $t=t_{\max}(j)$. Using $(-\varepsilon)^r\varepsilon=-(-\varepsilon)^j$, we obtain
$$
\sum_{\substack{r=j+2t+1\\ r+j=2k+k'+1}}(-\varepsilon)^r\,p^{jm+\frac{-r+j}{2}}\,D_{r,j}
= -\,(-\varepsilon)^j\,p^{\,jm+t_{\max}(j)}.
$$
Since $b>2k+k'$, summing over all $j$ yields
$$
S_{\ge k+1}
=-\sum_{0\le j\le k}(-\varepsilon)^j\,p^{\,jm+t_{\min}(j)-1}.
$$
For every integer $0\le j\le k$,
\[
\Bigl\lfloor\frac{k-j}{2}\Bigr\rfloor
=\Bigl\lceil\frac{k+1-j}{2}\Bigr\rceil-1
=t_{\min}(j)-1.
\]
Hence
\[
S_{\le k}
=\sum_{0\le j\le k}(-\varepsilon)^j\,p^{\,jm+\lfloor (k-j)/2\rfloor}
=\sum_{0\le j\le k}(-\varepsilon)^j\,p^{\,jm+t_{\min}(j)-1}
=-\,S_{\ge k+1}.
\]
Therefore, $C^\ve(k,k',b)=S_{\le k}+S_{\ge k+1}=0$.

For the functional equation, let $2k+k'=2l$, and we compute explicitly $C^\ve(k,k',b)$: For $b<k$,
\begin{equation}\label{n-odd-C-epsilon}
C^\ve(k,k',b)
=\sum_{0\le j\le b}(-\varepsilon)^j\,p^{\,jm+\lfloor (b-j)/2\rfloor}.
\end{equation}
For $k\leq b\leq k+k'$, $r+j\leq 2l$. Hence in this range, there is no contribution from $D_{r,j}=0$, and

\begin{eqnarray*}
&& S_{\ge k+1}=\sum_{k+1\le r\le b}\;
\sum_{\substack{0\le j\le k\\ r-j\ \mathrm{even}}}
(-\varepsilon)^r\,p^{jm}(p^{\frac{r-j}2}-p^{\frac {r-j}2-1})\\
&&\phantom{xxxxx} =\sum_{j=0}^k (-\varepsilon)^j p^{jm}(p^{\lfloor\frac{b-j}2\rfloor}-p^{\lceil\frac {k+1-j}2\rceil-1})
=-S_{\le k}+ \sum_{j=0}^k (-\varepsilon)^j p^{jm+\lfloor\frac{b-j}2\rfloor}.
\end{eqnarray*}
Hence for $k\leq b\leq k+k'$,
$C^\ve(k,k',b)=\sum_{j=0}^{k} (-\ve)^j p^{jm+\lfloor \frac {b-j}2\rfloor}.
$
For $0\leq i\leq k'$, since $k'$ is even, $\lfloor \frac {k+k'-i-j}2\rfloor=\lfloor \frac {k+i-j}2\rfloor+\frac {k'}2-i$. Hence 
$$C^\ve(k,k',k+k'-i)=p^{k+k'-i-l}C^\ve(k,k',k+i).
$$

For $k+k'<b\leq 2k+k'$, 
\begin{eqnarray*}
S_{\ge k+1}=\sum_{k+1\le r\le b}\;
\sum_{\substack{0\le j\le k\\ r-j\ \mathrm{even}}\atop \text{$r+j\leq 2l$}}
(-\varepsilon)^r\,p^{jm}(p^{\frac{r-j}2}-p^{\frac {r-j}2-1})
+\sum_{k+1\le r\le b}\;
\sum_{\substack{0\le j\le k\\ \text{$r+j=2l+1$}}}
(-\varepsilon)^r\,p^{jm}(\varepsilon p^{\frac {r-j-1}2}).
\end{eqnarray*}
The second sum is 
$-\sum_{j=2l-b+1}^{k} (-\ve)^j p^{jm+l-j}.$ The first sum is
\begin{eqnarray*}
&& \sum_{j=0}^{2l-b} (-\varepsilon)^j p^{jm} \sum_{r=k+1}^b (p^{\frac{r-j}2}-p^{\frac {r-j}2-1})
+\sum_{j=2l-b+1}^k (-\varepsilon)^j p^{jm} \sum_{r=k+1}^{2l-j} (p^{\frac{r-j}2}-p^{\frac {r-j}2-1})\\
&& \phantom{xxxxxxxxxxxxx} =-S_{\leq k}+\sum_{j=0}^{2l-b} (-\varepsilon)^j p^{\lfloor \frac {b-j}2\rfloor}+ \sum_{j=2l-b+1}^{k} (-\ve)^j p^{jm+l-j}.
\end{eqnarray*}
Therefore,
if $k+k'<b\leq 2k+k'$, 
$$C^\ve(k,k',b)=\sum_{j=0}^{2l-b} (-\ve)^j p^{jm+\lfloor \frac {b-j}2\rfloor}.
$$
Hence, for $0\leq i<k$, $C^\ve(k,k',2k+k'-i)=p^{l-i}C^\ve(k,k',i).$ This proves the functional equation.
$\square$

\subsubsection{The case when $p=2$ {\rm(}and $n$ is odd{\rm)}}
When $p=2$, we need the analogue of (\ref{p>2}), the following generalized Gauss sums.
\begin{lemma}\label{p=2}
For $r\ge1$, and $x,a\in\mathbb{Z}$, define
\[
G(x,a):=\sum_{u\ (\mathrm{mod}\ 2^r)} \exp\!\Bigl(\frac{2\pi i}{2^r}\bigl(xu^2+2au\bigr)\Bigr).
\]
When $x\neq 0$, write $x=2^j x_0$ with $2\nmid x_0$. 
\[
G(x,a)=
\begin{cases}
2^r, & j\ge r,\  2^{r-1}\mid a,\, r\ge 1, \\[2pt]
2^r, & j=r-1, \ 2^j\|2a, \\[2pt]
2^{\frac {r+j}2}
\exp\!\Bigl(-\frac{2\pi i}{2^{r-j}}\cdot (a')^{2}x_0^{-1}\Bigr)\,\Big(\frac{2}{x_0}\Big)^{r-j}(1+i^{\,x_0}), 
& j\leq r-2,\, 2^{j+1}|2a,\\
0, & \text{otherwise}
\end{cases}
\]
where $(\frac 2{x_0})$ is the Kronecker symbol, and $x_0^{-1}$ is taken modulo $2^{r-j}$, and $a=2^j a'$.
\end{lemma}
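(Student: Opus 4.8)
The plan is to reduce everything to the classical evaluation of the pure quadratic Gauss sum modulo a power of $2$,
$$\sum_{u\ (\mathrm{mod}\ 2^s)} e^{2\pi i c u^2/2^s}=2^{s/2}\Bigl(\tfrac 2c\Bigr)^{s}(1+i^{\,c})\qquad(s\ge 2,\ c\ \text{odd}),$$
which equals $0$ when $s=1$; this is standard (e.g.\ from Gauss sum reciprocity, or from a $u\mapsto u+2^{s-1}$ argument). First I would dispose of the degenerate ranges of $j$. If $j\ge r$ then $2^j x_0u^2\equiv 0\pmod{2^r}$, so $G(x,a)=\sum_u e^{2\pi i au/2^{r-1}}$, which is $2^r$ if $2^{r-1}\mid a$ and $0$ otherwise (the case $x=0$ is absorbed here). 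If $j=r-1$, then $u^2\equiv u\pmod 2$ gives $2^{r-1}x_0u^2\equiv 2^{r-1}x_0u\pmod{2^r}$, so $G(x,a)=\sum_u e^{2\pi i u(2^{r-1}x_0+2a)/2^r}$, which equals $2^r$ precisely when $2^{r-1}x_0+2a\equiv 0\pmod{2^r}$, i.e.\ $2^{j}\|2a$, and $0$ otherwise; one checks this also yields $G=0$ in the remaining degenerate case $r=1$, $j=0$.

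For the main range $j\le r-2$ I would first establish the vanishing statement by a translation trick: replacing $u$ by $u+2^{\,r-1-j}$ leaves $2^jx_0u^2$ unchanged modulo $2^r$ (the cross term is $x_0\,2^{r}u$ and the square term is $x_0\,2^{2r-j-2}$, both $\equiv 0\pmod{2^r}$ since $j\le r-2$), while it multiplies the linear contribution $e^{2\pi i\cdot 2au/2^r}$ by the constant $e^{2\pi i a/2^{j}}$. Since the shift is a bijection of $\mathbb Z/2^r\mathbb Z$, this forces $G(x,a)=e^{2\pi i a/2^{j}}G(x,a)$, hence $G(x,a)=0$ unless $2^{j}\mid a$, i.e.\ unless $2^{j+1}\mid 2a$. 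When $2^j\mid a$, write $a=2^ja'$ and pull $2^j$ out of the exponent; as $x_0u^2+2a'u$ depends on $u$ only modulo $2^{r-j}$, the sum collapses to $G(x,a)=2^{j}\sum_{u\ (\mathrm{mod}\ 2^{r-j})}e^{2\pi i(x_0u^2+2a'u)/2^{r-j}}$. Now $x_0$ is a unit modulo $2^{r-j}$ and $r-j\ge 2$, so completing the square $x_0u^2+2a'u=x_0(u+a'x_0^{-1})^2-(a')^2x_0^{-1}$ and applying the displayed formula with $s=r-j$ gives
$$G(x,a)=2^{j}\,e^{-2\pi i(a')^2x_0^{-1}/2^{r-j}}\,2^{(r-j)/2}\Bigl(\tfrac 2{x_0}\Bigr)^{r-j}(1+i^{x_0})=2^{(r+j)/2}\Bigl(\tfrac 2{x_0}\Bigr)^{r-j}(1+i^{x_0})\exp\!\Bigl(-\tfrac{2\pi i}{2^{r-j}}(a')^2x_0^{-1}\Bigr),$$
which is the asserted value. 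Collecting the three ranges of $j$ together with their vanishing sub-cases exhausts all possibilities.

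The routine-but-delicate point — and the one I would be most careful about — is the bookkeeping of $2$-adic valuations in the boundary cases ($j=r-1$ versus $j\ge r$, and the small values $r=1,2$ where the pure Gauss sum degenerates), together with pinning down the normalizing factor $\bigl(\tfrac 2{x_0}\bigr)^{r-j}(1+i^{x_0})$ and its dependence on the parity of $r-j$. Since $\bigl(\tfrac 2{x_0}\bigr)=\pm1$, this factor in fact only depends on $r-j\bmod 2$ and on $x_0\bmod 8$, so the cleanest sanity check is to verify the pure Gauss sum evaluation on the residues $x_0\equiv 1,3,5,7\pmod 8$, treating $s$ even and $s$ odd separately. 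I do not expect any genuinely hard step beyond this: once the classical $2$-power Gauss sum value is in hand, everything is a finite manipulation.
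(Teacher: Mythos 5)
Your proposal is correct and follows essentially the same route as the paper's proof: extract the factor $2^j$, reduce to a sum modulo $2^{r-j}$, complete the square, and invoke the classical evaluation of $\sum_{v \bmod 2^{s}} e^{2\pi i x_0 v^2/2^{s}}$ for $s\ge 2$. The only differences are cosmetic — you get the vanishing condition $2^j\mid a$ from the shift $u\mapsto u+2^{\,r-1-j}$ and handle $j=r-1$ via $u^2\equiv u\pmod 2$, while the paper splits $u=u_0+2^{\,r-j}t$, evaluates the geometric sum in $t$, and uses a pairing argument when $2a/2^j$ is odd — so the substance is identical.
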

\begin{proof}
If $j\ge r$, then $x\equiv 0\ (\mathrm{mod}\ 2^r)$ and the claim is easy to check. 
Assume $0\le j<r$. Write $x=2^j x_0$ with $2\nmid x_0$, we write 
\[
u=u_0+2^{\,r-j}t,\qquad u_0\ (\mathrm{mod}\ 2^{\,r-j}),\quad t=0,1,\dots,2^{j}-1 .
\]
Then we see that $x u^2 + 2a u \equiv 2^j x_0 u_0^2 + 2a u_0 + 2a 2^{\,r-j} t \pmod{2^r}.$
Therefore
$$
G(x,a)
=\sum_{u_0\ (\mathrm{mod}\ 2^{\,r-j})}\exp\!\Bigl(\frac{2\pi i}{2^{\,r-j}}\bigl(x_0u_0^2+(2a/2^j)u_0\bigr)\Bigr)
\cdot \sum_{t=0}^{2^{j}-1}\exp\!\Bigl(\frac{2\pi i}{2^{j}}\,2a\,t\Bigr).
$$
The latter sum in $t$ equals $0$ unless $2^j\mid 2a$, and equals $2^j$ when $2a=2^j a''$. Thus, if $2^j\nmid 2a$ then $G(x,a)=0$. If $2a=2^j a''$, then
$$
G(x,a)=2^j\sum_{u_0\ (\mathrm{mod}\ 2^{\,r-j})}\exp\!\Bigl(\frac{2\pi i}{2^{\,r-j}}\bigl(x_0u_0^2+a'' u_0\bigr)\Bigr).
$$

Put $T:=r-j$ and $a'':=a/2^{\,j}$. Then
\[
G(x,a)=2^j\,S_T(x_0,a''),
\qquad
S_T(x_0,a''):=\sum_{u_0\ (\mathrm{mod}\ 2^{T})}\exp\!\Bigl(\frac{2\pi i}{2^{T}}\,\bigl(x_0u_0^2+a''u_0\bigr)\Bigr).
\]
If $T=1$, a direct check gives $S_1(x_0,a'')=2$ for $a''$ odd and $0$ for $a''$ even.  
If $T\ge2$ and $a''$ is odd, we pair $u_0$ with $u_0+2^{T-1}$ to see cancellation so that 
$S_T(x_0,a'')=0$.  
If $T\ge 2$ and $a''$ is even (say $a''=2a'$), we have 
$x_0u_0^2+a''u_0=x_0(u_0+a' x_0^{-1})^2-x_0(a' x_0^{-1})^2,$ where $x_0^{-1}$ is taken modulo $2^T$. Hence
\[
S_T(x_0,a'')
=\exp\!\Bigl(-\frac{2\pi i}{2^{T}}\,(a')^2 x_0^{-1}\Bigr)\sum_{v\ (\mathrm{mod}\ 2^{T})}\exp\!\Bigl(\frac{2\pi i}{2^{T}}\,x_0 v^2\Bigr)
=\exp\!\Bigl(-\frac{2\pi i}{2^{T}}\cdot (a')^{2}x_0^{-1}\Bigr)\,
\Big(\frac{2}{x_0}\Big)^T(1+i^{\,x_0})\,2^{T/2},
\]
where the final identity uses a well-known formula for quadratic Gauss sums 
(cf. \cite[Theorem 1.5.4, p.27]{BEW}).
\end{proof}

We also need the following \cite[p. 29]{BEW}: 
\begin{lemma}
Let $\chi$ be a nontrivial Dirichlet character mod $k$, and conductor $f$.
Let $\tau_k(m,\chi)=\sum_{n=1}^{k-1} \chi(n) e^{2\pi imn/k}$. Then
$$\tau_k(m,\chi)=\begin{cases} kf^{-1}\tau_f(fm/k,\chi), &\text{if $k|fm$}\\ 0, &\text{otherwise}\end{cases}.
$$
Moreover, $\tau_f(m,\chi)=\overline{\chi}(m)\tau_f(1,\chi)$. We have the following:
$\tau_4(1,\chi)=2i$ if $\chi$ is non-trivial. 
Let $\chi$ be a primitive character (mod 8) normalized so that $\chi(5)=-1$. Then
$\tau_8(1,\chi)=2\sqrt{2} i^{\frac {1-\chi(-1)}2}$.
\end{lemma}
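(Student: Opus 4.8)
The plan is to treat the four assertions of the lemma in turn. Since it is invoked only with $k$ a power of $2$ and conductor $f\in\{4,8\}$ (in the proof of Lemma~\ref{p=2}), I will prove it for $k=2^{r}$, $f=2^{r'}$ with $1<r'\le r$; write $e_{N}(x)=e^{2\pi ix/N}$ and let $\chi^{*}$ denote the primitive character modulo $f$ inducing $\chi$, so that $\tau_{k}(m,\chi)=\sum_{n\bmod k,\ n\ \mathrm{odd}}\chi^{*}(n)\,e_{k}(mn)$ (the even $n$ contribute $0$, and being coprime to $k$ just means being odd here). First I would collapse this sum from $\mathbb{Z}/k$ down to $\mathbb{Z}/f$; then I would deduce the separability identity $\tau_{f}(m,\chi^{*})=\overline{\chi^{*}}(m)\,\tau_{f}(1,\chi^{*})$; and finally I would evaluate $\tau_{4}(1,\chi)$ and $\tau_{8}(1,\chi)$ by direct summation.

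For the collapse, put $g=k/f$ and group the odd residues $n\bmod k$ according to their class $a$ modulo $f$: the odd lifts of $a$ are $a+tf$ for $t=0,\dots,g-1$, and since $\chi^{*}$ is $f$–periodic and $e_{k}(mtf)=e^{2\pi imt/g}$, the block attached to $a$ contributes $\chi^{*}(a)\,e_{k}(ma)\sum_{t=0}^{g-1}e^{2\pi imt/g}$. The geometric sum is $g$ if $g\mid m$ and $0$ otherwise, so $\tau_{k}(m,\chi)=0$ unless $g\mid m$, i.e.\ unless $k\mid fm$; and when $g\mid m$ one rewrites $e_{k}(ma)=e_{f}\bigl((m/g)a\bigr)$ and uses $m/g=fm/k$ to arrive at $\tau_{k}(m,\chi)=\tfrac{k}{f}\,\tau_{f}\bigl(\tfrac{fm}{k},\chi^{*}\bigr)$. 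This part is pure bookkeeping.

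For separability, the case $\gcd(m,f)=1$ is the familiar change of variable $n\mapsto m^{-1}n$, which yields $\overline{\chi^{*}}(m)\,\tau_{f}(1,\chi^{*})$ at once. When $d:=\gcd(m,f)>1$ the right-hand side is $0$ since $\overline{\chi^{*}}(m)=0$, so I must show $\tau_{f}(m,\chi^{*})=0$; here I would use primitivity of $\chi^{*}$ modulo $f$ to produce a unit $c$ with $c\equiv1\pmod{f/d}$ and $\chi^{*}(c)\neq1$, note that then $f\mid m(c-1)$, and conclude from the substitution $n\mapsto cn$ that $\tau_{f}(m,\chi^{*})=\chi^{*}(c)\,\tau_{f}(m,\chi^{*})$, hence $\tau_{f}(m,\chi^{*})=0$. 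This vanishing is the only step that is more than bookkeeping, and it is exactly the place where the hypothesis that $f$ is the conductor is used — so I expect it to be the main point of the argument, although it is entirely classical.

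Finally, for the explicit values: directly, $\tau_{4}(1,\chi)=e^{2\pi i/4}+\chi(3)\,e^{2\pi i\cdot 3/4}=i+(-1)(-i)=2i$. Modulo $8$ there are precisely two primitive characters, both satisfying $\chi(5)=-1$; one is even (the Kronecker symbol $\bigl(\tfrac{2}{\cdot}\bigr)$) and one is odd, and evaluating $\tau_{8}(1,\chi)=\sum_{a\in\{1,3,5,7\}}\chi(a)\,e^{2\pi ia/8}$ termwise gives $\sqrt{8}=2\sqrt{2}$ for the even character and $i\sqrt{8}=2\sqrt{2}\,i$ for the odd one — i.e.\ the classical values $\sqrt{f}$ and $i\sqrt{f}$ of the Gauss sum of a real primitive character, according as it is even or odd. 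In either case this equals $2\sqrt{2}\,i^{(1-\chi(-1))/2}$, which is the claim. (Alternatively, these last two evaluations may simply be quoted from \cite{BEW}.)
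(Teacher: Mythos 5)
Your proof is correct, and there is no internal argument in the paper to compare it with: the lemma is simply quoted from \cite[p.~29]{BEW}, so what you supply is the standard textbook derivation that the authors chose to cite rather than reproduce. All three ingredients check out: the collapse from modulus $k$ to the conductor $f$ by grouping the odd residues into blocks $a+tf$ and summing the geometric series; the separability $\tau_f(m,\chi)=\overline\chi(m)\tau_f(1,\chi)$, where your handling of $d=\gcd(m,f)>1$ via a unit $c\equiv 1\pmod{f/d}$ with $\chi(c)\neq 1$ is exactly where primitivity is needed; and the explicit values $\tau_4(1,\chi)=2i$, $\tau_8\bigl(1,(\tfrac{2}{\cdot})\bigr)=2\sqrt2$, $\tau_8\bigl(1,(\tfrac{-2}{\cdot})\bigr)=2\sqrt2\,i$, which agree with $2\sqrt2\, i^{(1-\chi(-1))/2}$ (and indeed both primitive characters mod $8$ have $\chi(5)=-1$, so the normalization is automatic).

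One remark on your restriction to $k=2^{r}$, $f\in\{4,8\}$: it is not merely a convenience but essentially forced. Your collapse uses that every lift $a+tf$ of a unit $a$ mod $f$ is again a unit mod $k$, which holds precisely when every prime dividing $k$ divides $f$; without that hypothesis the displayed reduction formula is false as stated. For instance, with $k=12$, $f=4$, $\chi$ induced by the nontrivial character mod $4$, one computes $\tau_{12}(3,\chi)=4i$ whereas $(k/f)\,\tau_4(1,\chi)=6i$, and $\tau_{12}(1,\chi)=2i\neq 0$ even though $12\nmid 4$. So the lemma should be read with the tacit assumption that $k$ and $f$ have the same prime divisors (surely the setting intended by the citation), which covers every use in the paper: in Lemma \ref{p=2} and the proof of Theorem \ref{Siegel-2} only $2$-power moduli with conductor $4$ or $8$ occur. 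With that caveat recorded, your argument is complete and proves exactly what the paper needs.
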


Then

\begin{prop} Suppose $T\geq 4$ is even and $m>0$. Then
$$\sum_{x\in \Bbb Z/2^T\Bbb Z} \exp\left(-\frac {2\pi i}{2^T} xm\right)(\tfrac 2x)^T (1+i^x)=\begin{cases} 2^T-2^{T-1}, &\text{if $2^T|m$}\\
-2^{T-1}, &\text{if $2^{T-1}\|m$}\\
2^{T-1}(\frac {-1}{m2^{2-T}}), &\text{if $2^{T-2}\|m$}\\
0, &\text{otherwise}.
\end{cases}
$$
When $T=2$, we have
$$\sum_{x\in \Bbb Z/4\Bbb Z} \exp\left(-\frac {2\pi i}{4} xm\right)(\tfrac 2x)^2 (1+i^x)=\begin{cases} 2, &\text{if $4|m$}\\
-2, &\text{if $2\|m$}\\
2(\frac {-1}m), &\text{if $2\nmid m$}.
\end{cases}
$$

Suppose $T\geq 5$ is odd. Then
$$\sum_{x\in \Bbb Z/2^T\Bbb Z} \exp\left(-\frac {2\pi i}{2^T} xm\right)(\tfrac 2x)^T (1+i^x)
=\begin{cases} 2^{T-\frac 32} (\frac 2{m2^{3-T}}) (1+(\frac {-1}{m2^{3-T}})), &\text{if $2^{T-3}\|m$}\\
0, &\text{otherwise}.
\end{cases}
$$
When $T=3$, we have
$$\sum_{x\in \Bbb Z/8\Bbb Z} \exp\left(-\frac {2\pi i}{8} xm\right)(\tfrac 2x)^3 (1+i^x)
=\begin{cases} 2^{\frac 32} (\frac 2m) (1+(\frac {-1}m)), &\text{if $2\nmid m$}\\
0, &\text{otherwise}.
\end{cases}
$$
\end{prop}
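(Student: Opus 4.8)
The plan is to recognize the weight $\left(\frac{2}{x}\right)^{T}(1+i^{x})$ appearing in the sum as a $\mathbb{C}$-linear combination of Dirichlet characters of conductor dividing $8$; this converts each sum into a combination of classical Gauss sums, which are then evaluated by the reduction lemma for $\tau_{k}(m,\chi)$ stated above.

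First I would observe that $\left(\frac{2}{x}\right)=0$ for even $x$, so only odd $x$ contribute, and that on odd residues $\left(\frac{2}{x}\right)^{T}=1$ when $T$ is even while $\left(\frac{2}{x}\right)^{T}=\left(\frac{2}{x}\right)$ when $T$ is odd. A direct check modulo $8$ then yields
$$1+i^{x}=\chi_{0}'(x)+i\,\chi_{-4}(x)\ \ (T\text{ even}),\qquad \left(\tfrac{2}{x}\right)(1+i^{x})=\chi_{8}(x)+i\,\chi_{-8}(x)\ \ (T\text{ odd}),$$
where $\chi_{0}'$ is the principal character modulo $4$, $\chi_{-4}$ the nontrivial character modulo $4$, and $\chi_{8},\chi_{-8}$ the two primitive characters modulo $8$ normalized by $\chi_{8}(5)=\chi_{-8}(5)=-1$; in particular $\chi_{8}(-1)=1$ and $\chi_{-4}(-1)=\chi_{-8}(-1)=-1$. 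Hence, inducing all characters to modulus $2^{T}$, the sum in question equals $\sum_{x\ \mathrm{odd}\ (\mathrm{mod}\ 2^{T})}e^{-2\pi i xm/2^{T}}+i\,\tau_{2^{T}}(-m,\chi_{-4})$ when $T$ is even, and $\tau_{2^{T}}(-m,\chi_{8})+i\,\tau_{2^{T}}(-m,\chi_{-8})$ when $T$ is odd.

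Next I would evaluate the pieces. The elementary sum over odd residues equals $2^{T}\mathbf{1}_{2^{T}\mid m}-2^{T-1}\mathbf{1}_{2^{T-1}\mid m}$, obtained by subtracting the sum over even residues from the full sum. For each Gauss sum, the reduction lemma gives $\tau_{2^{T}}(-m,\chi)=2^{T}f^{-1}\tau_{f}(-mf/2^{T},\chi)$ when $2^{T}\mid mf$ and $0$ otherwise, where $f\in\{4,8\}$ is the conductor, and then $\tau_{f}(-mf/2^{T},\chi)=\overline{\chi}(-mf/2^{T})\,\tau_{f}(1,\chi)$; here I use that these characters are real together with the stated values $\tau_{4}(1,\chi_{-4})=2i$, $\tau_{8}(1,\chi_{8})=2\sqrt{2}$, $\tau_{8}(1,\chi_{-8})=2\sqrt{2}\,i$. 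Writing $m=2^{v}m'$ with $m'$ odd, one then reads off: the $\chi_{-4}$ term is nonzero only for $v=T-2$; the $\chi_{8}$ and $\chi_{-8}$ terms require $v\ge T-3$, and once the values at $-1$ are inserted their combination collapses to a power of $2$ times $\chi_{8}(m')\bigl(1+\chi_{-4}(m')\bigr)$, which forces $v=T-3$ and $m'\equiv1\ (\mathrm{mod}\ 4)$. Tracking the resulting powers of $2$ and the character values case by case in $v$ reproduces exactly the four displayed formulas; the cases $T=2$ and $T=3$ are the identical computation with $2^{T}=f$, so that $\tau_{2^{T}}=\tau_{f}$ with no reduction step, $\tau_{4}(1,\chi_{-4})=2i$ and $\tau_{8}(1,\chi_{8})=2\sqrt{2}$ being used directly.

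The main obstacle is entirely bookkeeping: correctly matching each $2$-adic valuation of $m$ to the divisibility condition $2^{T}\mid mf$, keeping straight the signs coming from $\chi(-1)$ and the $\sqrt{2}$ factors (which do not cancel when $T$ is odd, producing the exponent $2^{T-3/2}$), and combining the two characters so that the answer comes out in the asserted rational, resp. $\sqrt{2}$-times-rational, form. No step is conceptually deep once the character decomposition is in place; the only real risk is dropping a power of $2$ or a sign across the several divisibility regimes.
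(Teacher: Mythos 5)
Your proposal is correct and follows essentially the same route as the paper: write the weight as a combination of characters of conductor dividing $8$ (for odd $T$ this is exactly $(\tfrac 2x)+i(\tfrac{-2}x)$, which is the paper's decomposition) and then evaluate via the reduction lemma $\tau_{2^T}(m,\chi)=2^Tf^{-1}\tau_f(fm/2^T,\chi)$ together with $\tau_f(m,\chi)=\overline{\chi}(m)\tau_f(1,\chi)$ and the stated values of $\tau_4(1,\chi)$, $\tau_8(1,\chi)$. The only difference is that you also spell out the even-$T$ case (principal part computed as full sum minus even residues, plus the $\chi_{-4}$ Gauss sum), which the paper leaves to the reader, and your sign and valuation bookkeeping checks out.
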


\begin{proof} We only prove when $T$ is odd. 
Notice the sum runs over odd $x$ since $(\tfrac 2x)=0$ for $x$ even. 
Since $i^x=(\frac {-1}x)i$ for $x$ odd, write the sum as
$$\sum_{x\in \Bbb Z/2^T\Bbb Z} e^{-\frac {2\pi i}{2^T} xm}(\tfrac 2x) + i\sum_{x\in \Bbb Z/2^T\Bbb Z} e^{-\frac {2\pi i}{2^T} xm}(\tfrac {-2}x).
$$
Let $\chi(x)=(\tfrac 2x)$ and $\chi'(x)=(\tfrac {-2}x)$. They are primitive characters mod 8, and $\chi(5)=-1, \chi'(5)=-1$. 
Then apply the above lemma.
\end{proof}

\noindent{\it Proof of Theorem \ref{Siegel-2}.}
Suppose $2^k\|\eta$ and $2^{2k+k'}\|q(\eta)$. 
Recall the following:
If $r\leq k$, then
$$B_{r,\eta}=2^{2mr}+\sum_{j=0\atop \text{$r-j$ even}}^{r-1} 2^{mr+mj+\frac {-r+j}2}(2^{r-j}-2^{r-j-1}).
$$
If $r>k$, we need to divide into two cases:

Case 1. $2^{k+1}|a_1,...,a_m,b_1,...,b_m$, but $2^k\|a_0;$
In this case, by congruence, $k'=0$. 
If $r\geq k+3$,
$$B_{r,\eta}=\sum_{j=0\atop \text{$r-j$ even}}^{k} 2^{rm+jm+\frac {-r+j}2} C^{(2)}_{r,j}+\sum_{j=0\atop \text{$r-j$ odd}}^{k} 2^{rm+jm+\frac {-r+j}2} D^{(2)}_{r,j},
$$
where $A_j=2^{-2j}q(\eta)$, and [Note that since $j\leq k$, $r-j\geq 3$.]
$$C^{(2)}_{r,j}=\begin{cases} 2^{r-j}-2^{r-j-1}, &\text{if $2^{r-j}| A_j$}\\ -2^{r-j-1}, &\text{if $2^{r-j-1}\| A_j$}\\
2^{r-j-1}(\frac {-1}{A_j 2^{2-r+j}}), &\text{if $2^{r-j-2}\| A_j$}\\
 0, &\text{if $2^{r-j-2}\nmid A_j$}\end{cases},
$$ 
$$D^{(2)}_{r,j}=\begin{cases} 2^{r-j-\frac 32}(\tfrac 2{A_j2^{3-r+j}})(1+(\tfrac {-1}{A_j2^{3-r+j}})), &\text{if $2^{r-j-3}\| A_j$ and $r-j\geq 3$}\\
 0, &\text{otherwise.}\end{cases}
$$

We also have
$$B_{k+1,\eta}=2^{2m(k+1)}+\sum_{j=0\atop \text{$k+1-j$ even}}^{k} 2^{(k+1)m+jm+\frac {-k-1+j}2} (2^{k+1-j}-2^{k-j}).
$$
$$B_{k+2,\eta}=2^{(2k+3)m}+\sum_{j=0\atop \text{$k+2-j$ even}}^{k} 2^{(k+2)m+jm+\frac {-k-2+j}2} (2^{k+2-j}-2^{k+1+j}).
$$

Case 2. $2^k\|(a_1,...,a_m,b_1,...,b_m)$ and $2^k|a_0$.
If $r\geq k+2$,
$$B_{r,\eta}=\sum_{j=0\atop \text{$r-j$ even}}^{k} 2^{rm+jm+\frac {-r+j}2} C^{(2)}_{r,j}+\sum_{j=0\atop \text{$r-j$ odd}}^{k} 2^{rm+jm+\frac {-r+j}2} D^{(2)}_{r,j}.
$$

We have
$$B_{k+1,\eta}=\sum_{j=0\atop \text{$k+1-j$ even}}^{k} 2^{(k+1)m+jm+\frac {-k-1+j}2} (2^{k+1-j}-2^{k-j}).
$$

Now if $k'$ is odd, then $\chi_\eta(2)=0$. Consider 
$$(1-2^{2(m-s)})^{-1}J(s,\eta,\Phi_2)=\sum_{b=0}^\infty C(k,k',b)2^{(m-s)b},
$$
where $\displaystyle C(k,k',b)=\sum_{r=0\atop \text{$b-r$ even}}^b 2^{-mr} B_{r,\eta}$. 
Then as in odd prime case, we can show
\begin{equation}\label{n-odd-C}
C(k,k',b)=\begin{cases} \ds\sum_{u=0\atop \text{$b-u$ even}}^b 2^{mu} 2^{\frac {b-u}2}, &\text{if $b<k$}\\
\ds\sum_{u=0\atop \text{$b-u$ even}}^{k} 2^{mu}2^{\frac {b-u}2}, &\text{if $k\leq b\leq k+k'-1$}\\
\ds\sum_{u=0\atop \text{$b-u$ even}}^{2k+k'-1-b} 2^{mu}2^{\frac {b-u}2}, &\text{if $k+k'\leq b\leq 2k+k'-1$}.
\end{cases}
\end{equation}
So the functional equation of $Q_{\eta,2}(X)$ follows.

\medskip

If $k'$ is even, and $q(\eta)2^{-2k-k'}\equiv 3$ (mod 4), then $\chi_\eta(2)=0$, and it is similar to odd $k'$ case.

\medskip

When $k'$ is even, and $q(\eta)2^{-2k-k'}\equiv 1$ (mod 8), then $\chi_\eta(2)=1$. Then
$$Q_\eta(2^{m-s})=(1+2^{m-s})^{-1}J(s,\eta,\Phi_2)=\sum_{b=0}^\infty (-1)^b 2^{(m-s)b} C(k,k',b),
$$
where $C(k,k',b)=\sum_{r=0}^b (-1)^r 2^{-mr} B_{r,\eta}$. Let $2k+k'=2l$.

We divide into two cases:

Case 1. $2^{k+1}|a_1,...,a_m,b_1,...,b_m$, but $2^k\|a_0$. In this case, $k'=0$.

For $b<k$,
\begin{equation}\label{n-odd-C-epsilon}
C(k,0,b)
=\sum_{j=0}^b (-1)^j\,2^{\,jm+\lfloor (b-j)/2\rfloor}.
\end{equation}

If $b\geq k+3$, write
$$C(k,k',b)=S_{\leq k}+ (-1)^{k+1} 2^{-m(k+1)} B_{k+1,\eta}+
(-1)^{k+2} 2^{-m(k+2)} B_{k+2,\eta}+S_{\geq k+3, b},
$$
where 
\[
S_{\le k}:=\sum_{0\le r\le k} (-1)^r 2^{-mr}B_{r,\eta},\quad
S_{\ge k+3,b}:=\sum_{k+3\le r\le b} (-1)^r 2^{-mr}B_{r,\eta}.
\]

As in the odd prime case, we show 
$$
S_{\le k}
=\sum_{0\le j\le k} (-1)^j\,2^{\,jm+\lfloor (k-j)/2\rfloor}.
$$

We have
\begin{eqnarray*}
&& (-1)^{k+1} 2^{-m(k+1)} B_{k+1,\eta}+
(-1)^{k+2} 2^{-m(k+2)} B_{k+2,\eta} \\
&& =-(-1)^k \left(2^{m(k+1)}+\sum_{j=0\atop \text{$k+1-j$ even}}^k 2^{jm+\frac {k-1-j}2}\right)
+(-1)^k \left(2^{m(k+1)}+\sum_{j=0\atop \text{$k-j$ even}}^k 2^{jm+\frac {k-j}2}\right).
\end{eqnarray*}

Here 
$$-(-1)^k \left(\sum_{j=0\atop \text{$k+1-j$ even}}^k 2^{jm+\frac {k-1-j}2}
-\sum_{j=0\atop \text{$k-j$ even}}^k 2^{jm+\frac {k-j}2}\right)=S_{\leq k}.
$$

Hence $C(k,0,k+2)=2 C(k,0,k)$.

For $b\geq k+3$,

\begin{eqnarray*}
&& S_{\ge k+3,b}=\sum_{k+3\le r\le b}\;
\sum_{\substack{0\le j\le k\\ r-j\ \mathrm{even}}\atop \text{$r+j\leq 2k$}}
(-1)^r\,2^{jm}(2^{\frac{r-j}2}-2^{\frac {r-j}2-1})
+\sum_{k+3\le r\le b}\;
\sum_{\substack{0\le j\le k\\ r+j=2k+2}}
(-1)^r\,2^{jm+\frac {r-j}2-1} \\
&&\phantom{xxxsssss}+ \sum_{k+3\le r\le b}\;
\sum_{\substack{0\le j\le k\\ r+j=2k+3}}
(-1)^r\,2^{jm+\frac{r-j-1}2}.
\end{eqnarray*}
Now the sum of the first two sums is
\begin{eqnarray*}
&& \sum_{j=0}^{2k+2-b} (-1)^j 2^{jm}\sum_{r=k+3\atop \text{$r-j$ even}}^b 2^{\frac {r-j}2-1}
+\sum_{j=2k+3-b}^{k-1} (-1)^j 2^{jm}\sum_{r=k+3\atop \text{$r-j$ even}}^{2k+2-j} 2^{\frac {r-j}2-1} \\
&& \phantom{xxxx} =\sum_{j=0}^{2k+2-b} (-1)^j 2^{jm} (2^{\lfloor \frac {b-j}2\rfloor}-2^{\lceil \frac {k+1-j}2\rceil})
+\sum_{j=2k+3-b}^{k-1} (-1)^j 2^{jm} (2^{k-j+1}-2^{\lceil \frac {k+1-j}2\rceil}).
\end{eqnarray*}

The third sum is
$$-\sum_{j=2k+3-b}^k (-1)^j 2^{jm+k-j+1}.
$$

Therefore,
$$
S_{\ge k+3,b}=-2S_{\leq k}+\sum_{j=0}^{2k+2-b} (-1)^j 2^{jm+\lfloor \frac {b-j}2\rfloor}.
$$
Here we use the fact that for every integer $0\le j\le k$,
$
\Bigl\lfloor\frac{k-j}{2}\Bigr\rfloor
=\Bigl\lceil\frac{k+1-j}{2}\Bigr\rceil-1.
$
Hence 
$$C(k,0,b)=\sum_{j=0}^{2k+2-b} (-1)^j 2^{jm+\lfloor \frac {b-j}2\rfloor}.
$$
If $b>2k+2$, $S_{\ge k+3,b}=-2S_{\leq k}$, and hence $C(k,0,b)=0$.

Case 2. $2^k\|(a_1,...,a_m,b_1,...,b_m)$ and $2^k|a_0$.

As in the odd prime case, for $b\leq k$, 
$$
C(k,k',b)
=\sum_{j=0}^b (-1)^j\,2^{\,jm+\lfloor (b-j)/2\rfloor}.
$$

Let $C(k,k',k)=S_{\leq k}$.
For $b=k+1$,
\begin{eqnarray*}
 C(k,k',k+1)=S_{\leq k}+(-1)^{k+1} 2^{-m(k+1)} B_{k+1,\eta} 
 =S_{\leq k}+(-1)^{k+1} \sum_{j=0\atop \text{$k+1-j$ even}}^k 2^{jm+\frac {k-j-1}2}.
\end{eqnarray*}

Now 
$$(-1)^{k+1}2^{-m(k+1)}B_{k+1,\eta}+(-1)^{k+2}2^{-m(k+2)}B_{k+2,\eta}=\sum_{j=0}^k (-1)^j\,2^{\,jm+\lfloor (k-j)/2\rfloor}=S_{\leq k},
$$
so that $C(k,k',k+2)=2S_{\leq k}$. 

For $b\geq k+3$, write 
$C(k,k',b)=C(k,k',k+2)+S_{\geq k+3,b}$, where
\begin{eqnarray*}
&& S_{\geq k+3,b}=\sum_{r=k+3}^b (-1)^r 2^{-mr} B_{r,\eta} \\
&& =\sum_{k+3\le r\le b}\;
\sum_{\substack{0\le j\le k\\ r-j\ \mathrm{even}}\atop \text{$r+j\leq 2l$}}
(-1)^r\,2^{jm}(2^{\frac{r-j}2}-2^{\frac {r-j}2-1})
+\sum_{k+3\le r\le b}\;
\sum_{\substack{0\le j\le k\\ r+j=2l+2}}
(-1)^r\,2^{jm+\frac {r-j}2-1} \\
&&\phantom{xxx}+ \sum_{k+3\le r\le b}\;
\sum_{\substack{0\le j\le k\\ r+j=2l+3}}
(-1)^r\,2^{jm+\frac{r-j-1}2}.
\end{eqnarray*}

Suppose $k+2\leq b\leq k+k'$. Then $r+j\leq 2k+k'=2l$. Hence
$$
S_{\geq k+3,b}=\sum_{k+3\le r\le b}\;
\sum_{\substack{0\le j\le k\\ r-j\ \mathrm{even}}\atop \text{$r+j\leq 2l$}}
(-1)^r\,2^{jm+\frac {r-j}2-1}
=\sum_{j=0}^k (-1)^j 2^{jm+\lfloor \frac {b-j}2\rfloor}-2S_{\leq k}.
$$
Hence for $k+2\leq b\leq k+k'$, 
$$C(k,k',b)=\sum_{j=0}^k (-1)^j 2^{jm+\lfloor \frac {b-j}2\rfloor}.
$$
We can write, for $k+2\leq i\leq k+k'$, 
$$C(k,k',2k+k'+2-i)=2^{l-k+1-i}C(k,k',i).
$$

For $b=k+k'+1$, 
$$C(k,k',k+k'+1)=2^{\frac {k'}2}C(k,k',k+1).
$$

For $k+k'+2\leq b\leq 2k+k'+2$, one can show, $0\leq i\leq k$,
$$C(k,k',2k+k'+2-i)=2^{l+1-i}C(k,k',i).
$$

\medskip

Similarly for $k'$ even and $q(\eta)2^{-2k-k'}\equiv 5$ (mod 8), we compute the coefficients of $Q_\eta(2^{m-s})=(1-2^{m-s})^{-1}J(s,\eta,\Phi_2)$ since $\chi_\eta(2)=-1$. Since it is similar to the above, we omit the details.


\begin{thebibliography}{99}


\bibitem{AG}H.~Atobe and W-T.~Gan, {\em Local theta correspondence of tempered representations and Langlands parameters}, Inv. Math. {\bf 210} (2017), no. 2, 341--415. 


\bibitem{BEW} B-C.~Berndt, R-J.~Evans, and K-S.~Williams, Gauss and Jacobi sums. Canadian Mathematical Society Series of Monographs and Advanced Texts. John Wiley \& Sons, Inc., New York, 1998. xii+583 pp.


\bibitem{BJ}A.~Borel and H.~Jacquet, 
{\em Automorphic forms and automorphic representations}, Proc. Sympos. Pure Math., XXXIII, Part 1, pp. 189--207, Amer. Math. Soc., Providence, RI, 1979.


\bibitem{CZ}R.~Chen and J.~Zou, {\em Arthur's multiplicity formula for even orthogonal and unitary groups}, J. Eur. Math. Soc. {\bf 27} (2025), no. 12, 4769--4843.



\bibitem{G} B. Gross, {\em Groups over $\Bbb Z$}, Inv. math. {\bf 124} (1996), 263--279.

\bibitem{Ik01}T.~Ikeda, {\em On the lifting of elliptic cusp forms to Siegel cusp forms of degree $2n$}, Ann. of Math. (2) {\bf 154} (2001), no. 3, 641--681.

\bibitem{Ik08} \bysame, {\em On the lifting of hermitian modular forms}, Comp. Math. {\bf 144} (2008), 1107--1154. 

\bibitem{Ik17} \bysame, {\em On the functional equation of the Siegel series}, J. Num. Th. {\bf 172} (2017), 44--62.

\bibitem{IK} H. Iwaniec and E. Kowalski, Analytic Number Theory, American Mathematical Society, Colloquium Publications, Vol {\bf 53}, 2004.

\bibitem{Katsurada}H.~Katsurada, An explicit formula for Siegel series. Amer. J. Math. {\bf 121} (1999), no. 2, 415--452. 

\bibitem{KY} H-H.~Kim and T.~Yamauchi, {\em Cusp forms on the exceptional group of type $E_7$}, Comp. Math. {\bf 152} (2016), no. 2, 223--254.


\bibitem{KY1} \bysame, {\em Higher level cusp forms on exceptional group of type $E_7$}, Kyoto J. Math. {\bf 63} (2023), no. 3, 579--614.

\bibitem{KY3} \bysame, {\em On explicit Fourier expansions of theta lifts to $\SO(3,n+1)$ arising from elliptic newforms of level one}, in preparation.





\bibitem{Miyake}T.~Miyake, Modular forms. Translated from the 1976 Japanese original by Yoshitaka Maeda. Reprint of the first 1989 English edition. Springer Monographs in Mathematics. Springer-Verlag, Berlin, 2006. x+335 pp.

\bibitem{MS} T.~Miyazaki and Y.~Saito, {\em Theta lifts to certain cohomological representations of indefinite orthogonal groups}, Res. Number Theory {\bf 10} (2024), no. 2, Paper No. 25, 27 pp.



\bibitem{Po}A.~Pollack, {\em Modular forms on indefinite orthogonal groups of rank three. With appendix ``Next to minimal representation" by G. Savin}, J. Num. Th. {\bf 238} (2022), 611--675. 

\bibitem{Po1}\bysame, {\em  The minimal modular form on quaternionic $E_8$},
J. Inst. Math. Jussieu {\bf 21} (2022), no. 2, 603-636.


\bibitem{Serre} J-P.~ Serre, A Course in Arithmetic. Graduate Texts in Mathematics, No. 7. Springer-Verlag, New York-Heidelberg, 1973. viii+115 pp. 


\bibitem{Wh} A.L. Whiteman, {\em A note on Kloosterman sums}, Bull. Amer. Math. Soc. {\bf 51} (1945), 373--377.

\bibitem{Y10}S.~Yamana,  {\em On the lifting of elliptic cusp forms to cusp forms on quaternionic unitary groups.} J. Num. Th. {\bf 130} (2010), no. 11, 2480--2527.

\end{thebibliography}
\end{document}